\numberwithin{equation}{section}
\numberwithin{figure}{section}
\theoremstyle{plain}
\newtheorem{thm}{\protect\theoremname}[section]
\theoremstyle{definition}
\newtheorem{defn}[thm]{\protect\definitionname}
\theoremstyle{plain}
\newtheorem{lem}[thm]{\protect\lemmaname}
\theoremstyle{plain}
\newtheorem{prop}[thm]{\protect\propositionname}
\theoremstyle{definition}
\newtheorem{example}[thm]{\protect\examplename}
\theoremstyle{remark}
\newtheorem{rem}[thm]{\protect\remarkname}
\theoremstyle{plain}
\newtheorem{cor}[thm]{\protect\corollaryname}
\providecommand{\corollaryname}{Corollary}
\providecommand{\definitionname}{Definition}
\providecommand{\examplename}{Example}
\providecommand{\lemmaname}{Lemma}
\providecommand{\propositionname}{Proposition}
\providecommand{\remarkname}{Remark}
\providecommand{\theoremname}{Theorem}
\begin{document}
\subjclass[2020]{Primary 46E22. Secondary 47B32, 41A65, 42A82, 42C15, 60G15, 68T07.}
\title[]{New duality in choices of feature spaces via kernel analysis}
\author{Palle E.T. Jorgensen}
\address{(Palle E.T. Jorgensen) Department of Mathematics, The University of
Iowa, Iowa City, IA 52242-1419, U.S.A.}
\email{palle-jorgensen@uiowa.edu}
\author{James Tian}
\address{(James F. Tian) Mathematical Reviews, 416 4th Street Ann Arbor, MI
48103-4816, U.S.A.}
\email{james.ftian@gmail.com}
\begin{abstract}
We present a systematic study of the family of positive definite (p.d.)
kernels with the use of their associated feature maps and feature
spaces. For a fixed set $X$, generalizing Loewner, we make precise
the corresponding partially ordered set $Pos\left(X\right)$ of all
p.d. kernels on $X$, as well as a study of its global properties.
This new analysis includes both results dealing with applications
and concrete examples, including such general notions for $Pos\left(X\right)$
as the structure of its partial order, its products, sums, and limits;
as well as their Hilbert space-theoretic counterparts. For this purpose,
we introduce a new duality for feature spaces, feature selections,
and feature mappings. For our analysis, we further introduce a general
notion of dual pairs of p.d. kernels. Three special classes of kernels
are studied in detail: (a) the case when the reproducing kernel Hilbert
spaces (RKHSs) may be chosen as Hilbert spaces of analytic functions,
(b) when they are realized in spaces of Schwartz-distributions, and
(c) arise as fractal limits. We further prove inverse theorems in
which we derive results for the analysis of $Pos\left(X\right)$ from
the operator theory of specified counterpart-feature spaces. We present
constructions of new p.d. kernels in two ways: (i) as limits of monotone
families in $Pos\left(X\right)$, and (ii) as p.d. kernels which model
fractal limits, i.e., are invariant with respect to certain iterated
function systems (IFS)-transformations.
\end{abstract}

\keywords{Positive-definite kernel, feature space, feature selection, operator
theory, reproducing kernel Hilbert space, Schwartz distributions,
embedding problem, factorization, geometry, optimization, Principal
Component Analysis, covariance kernels, kernel optimization, Gaussian
process.}
\maketitle

\section{Introduction}

The purpose of this paper is to introduce a new duality for the study
of feature spaces, feature selections, and feature mappings, which
arise in diverse applications of kernel analysis of non-linear problems.
Here, our use of the notion of \textquotedblleft feature space\textquotedblright{}
is in the sense of data science: it refers to the collections of features
used to characterize the data at hand. By \textquotedblleft feature
selection,\textquotedblright{} we mean one or more techniques from
machine learning, typically involving the choice of subsets of relevant
features from the original set to enhance model performance. The term
\textquotedblleft feature mappings\textquotedblright{} refers to a
technique in data analysis and machine learning for transforming input
data from a lower-dimensional space to a higher-dimensional space
using kernels, enabling easier analysis or classification. Choices
of feature mapping involve constructions and optimization algorithms,
which lead to the selection of specific functions. These mappings
serve to transform the original data into a new set of features (feature
spaces) that better capture the significant patterns in the data.

We consider families of positive definite (p.d.) kernels, defined
on a product set $X\times X$, where $X$ is merely a set with no
extra \emph{a priori} structure. The positivity condition for $K$
(p.d.), \prettyref{def:b1}, was first studied by Aronszajn \cite{MR51437}
and his contemporaries (see also \cite{MR3526117,MR1161974,MR4601874,MR4569938}). 

Pairs $\left(X,K\right)$ arise in various contexts, including optimization,
principal component analysis (PCA), partial differential equations
(PDEs), and statistical inference. In stochastic models, the p.d.
kernel often serves as a covariance kernel of a Gaussian field. We
emphasize that the set $X$ does not lend itself to direct analysis;
in particular, it does not come equipped with any linear structure.
However, measurements performed on $X$ often lead to p.d. kernels
$K$. Moreover, $K$ then allows one to represent data from $X$ in
a linear space, often referred to as a feature space. 

We say that a pair $\left(\phi,\mathscr{H}\right)$ represents a feature
map, and a feature space if $\phi$ is a function from $X$ mapping
into the Hilbert space $\mathscr{H}$ in such a way that $K$ is recovered
from the inner product in $\mathscr{H}$ via $\phi$; see \prettyref{prop:c3}
below. There is a vast variety of choices of feature selections in
the form $\left(\phi,\mathscr{H}\right)$, and Aronszajn's reproducing
kernel Hilbert space (RKHS), denoted as $\mathscr{H}_{K}$, is only
one possibility. 

In this paper, we introduce a new duality approach to the study of
choices of feature selections, and apply it to particular p.d. kernels
$\left(X,K\right)$ arising in both pure and applied mathematics.
For related papers on feature selection, we refer to \cite{MR4567343,MR4561157,MR4536608,MR4467152,MR4295177,MR4302453}
as well as the references cited therein.

\textbf{Organization.} Our duality tools are outlined in detail below,
and in more detail, in \prettyref{subsec:3-1}, especially Propositions
\ref{prop:3-5} and \ref{prop:3-7}. \prettyref{sec:4} deals with
the need for choices of \textquotedblleft bigger\textquotedblright{}
spaces for implementation, which includes here choices of Hilbert
spaces of Schwartz distributions, i.e., generalized functions. For
optimization questions arising in practice, it is important to have
useful ordering of families of kernels, as well as monotone limit
theorems for kernels, and these two questions are addressed systematically
in \prettyref{sec:F}. Applications of our kernel duality principles
and new transforms, are addressed in \prettyref{sec:G}. 

\textbf{Applications.} While our focus is primarily theoretical, kernel
theory and optimization have had significant impact on practical applications,
particularly in machine learning algorithms and big data analysis
\cite{MR2810909,MR3173967,MR3024752,MR2597189,MR2579912,MR4300781,MR3969356}.
Beyond these areas, kernel methods have found relevance in fields
such as statistical inference, quantum dynamics, perturbation theory,
and operator algebras, with applications ranging from multiplicative
change-of-measure algorithms to the analysis of coherent states and
Fock spaces \cite{MR4405120,MR4241109,MR4113042,MR4059341,MR3994389,MR3439677}.
This versatility has sparked renewed interest in both the theoretical
foundations and practical applications of kernels, leading to deeper
understanding of inference techniques and optimization models \cite{MR4655783,MR3875593,MR1482832,MR4751246,MR4735224,MR4734568,MR4689655,MR4635411}.

Our approach focuses on identifying duality principles to better understand
feature spaces and their role in kernel methods. While the ``kernel
trick'' in machine learning often bypasses explicit constructions
of the ambient feature space, studying its structure offers a richer
theoretical perspective. Insights into feature space flexibility,
stability, and scalability can inform the design and optimization
of kernels, enhancing algorithmic robustness and interpretability.
This dual perspective bridges practical applications like clustering,
support-vector machines, and principal component analysis with the
broader mathematical framework, enabling more sophisticated kernel-based
solutions for complex data problems.

\section{\label{sec:2}Preliminaries}

In this section we introduce the main notions which will be used inside
the paper.

The concept of a kernel in machine learning is powerful tool used
in in the design of Support Vector Machines (SVMs). A kernel is a
function that operates on points from the input space, commonly referred
to as the $X$ space. The primary role of this function is to return
a scalar value, but a Hilbert space, called a reproducing kernel Hilbert
space (RKHS). This higher-dimensional space, known as the $Z$ space.
It conveys how close or similar vectors are in the $Z$ space The
kernel allows one to glean the necessary information about the vectors
in this more complex space without having to access the space directly.
This approach allows one to understand the relationship and position
of vectors in a higher-dimensional space and is a powerful tool for
classification tasks.

In this section, we introduce fundamental definitions, along with
selected lemmas and properties that serve as key building blocks for
the paper.
\begin{defn}[Positive definite]
\label{def:b1}Let $X$ be a set. 
\begin{enumerate}
\item A function $K:X\times X\rightarrow\mathbb{C}$ is said to be a positive
definite (p.d.) kernel if, for all $n\in\mathbb{N}$, all $\left(x_{i}\right)_{i=1}^{n}$
in $X$, and all $\left(c_{i}\right)_{i=1}^{n}$ in $\mathbb{C}$,
we have 
\[
\sum_{i=1}^{n}\sum_{j=1}^{n}\overline{c_{i}}c_{j}K\left(x_{i},x_{j}\right)\geq0.
\]
\item Given a p.d. kernel $K:X\times X\rightarrow\mathbb{C}$, let $\mathscr{H}_{K}$
be the Hilbert completion of the set $H_{0}=span\left\{ K_{x}:x\in X\right\} $,
where $K_{x}\left(\cdot\right)=K\left(\cdot,x\right)$, $x\in X$,
with respect to the norm 
\[
\left\Vert \sum_{i=1}^{n}c_{i}K_{x_{i}}\right\Vert _{\mathscr{H}_{K}}^{2}=\sum_{i=1}^{n}\sum_{j=1}^{n}\overline{c_{i}}c_{j}K\left(x_{i},x_{j}\right).
\]
$\mathscr{H}_{K}$ is called the reproducing kernel Hilbert space
(RKHS) of $K$, and it has the reproducing property:
\[
f\left(x\right)=\left\langle K_{x},f\right\rangle _{\mathscr{H}_{K}}
\]
valid for all $f\in\mathscr{H}_{K}$ and $x\in X$. 
\end{enumerate}
\end{defn}

Throughout this paper, all the Hilbert spaces are assumed separable. 
\begin{lem}[Parseval frame]
\label{lem:b2}Let $\mathscr{H}$ be a Hilbert space, and $\left\{ f_{n}\right\} _{n\in\mathbb{N}}\subset\mathscr{H}$.
Suppose 
\begin{equation}
\sum_{n\in\mathbb{N}}\left|\left\langle f_{n},h\right\rangle _{\mathscr{H}}\right|^{2}=\left\Vert h\right\Vert _{\mathscr{H}}^{2},\quad\forall h\in\mathscr{H}.\label{eq:b1}
\end{equation}
Then $\left\{ f_{n}\right\} $ is an orthonormal basis (ONB) if and
only if $\left\Vert f_{n}\right\Vert _{\mathscr{H}}=1$ for all $n\in\mathbb{N}$.
\end{lem}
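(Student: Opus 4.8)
The plan is to dispose of the trivial direction first and then concentrate on the converse, which carries all the content. The forward implication is immediate: if $\left\{ f_{n}\right\} $ is an ONB, then each $f_{n}$ is by definition a unit vector, so $\left\Vert f_{n}\right\Vert _{\mathscr{H}}=1$ for all $n\in\mathbb{N}$, and no appeal to \eqref{eq:b1} is even needed.

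For the converse, assume $\left\Vert f_{n}\right\Vert _{\mathscr{H}}=1$ for every $n$. The key idea is to feed the frame vectors themselves back into the Parseval identity \eqref{eq:b1}. Fixing $m\in\mathbb{N}$ and setting $h=f_{m}$ in \eqref{eq:b1}, I would obtain
\[
\sum_{n\in\mathbb{N}}\left|\left\langle f_{n},f_{m}\right\rangle _{\mathscr{H}}\right|^{2}=\left\Vert f_{m}\right\Vert _{\mathscr{H}}^{2}=1.
\]
The diagonal term $n=m$ equals $\left|\left\langle f_{m},f_{m}\right\rangle _{\mathscr{H}}\right|^{2}=\left\Vert f_{m}\right\Vert _{\mathscr{H}}^{4}=1$ by the normalization hypothesis. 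Isolating this single term from the (convergent, nonnegative) series then forces $\sum_{n\neq m}\left|\left\langle f_{n},f_{m}\right\rangle _{\mathscr{H}}\right|^{2}=0$, and since each summand is nonnegative, $\left\langle f_{n},f_{m}\right\rangle _{\mathscr{H}}=0$ for all $n\neq m$. Thus $\left\{ f_{n}\right\} $ is an orthonormal system.

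It then remains to upgrade orthonormality to a basis, i.e., to verify completeness, and here I would invoke \eqref{eq:b1} a second time: if $h\in\mathscr{H}$ satisfies $\left\langle f_{n},h\right\rangle _{\mathscr{H}}=0$ for all $n$, then \eqref{eq:b1} gives $\left\Vert h\right\Vert _{\mathscr{H}}^{2}=0$, whence $h=0$. Therefore the orthonormal system has trivial orthogonal complement, its span is dense, and $\left\{ f_{n}\right\} $ is an ONB. The argument is entirely elementary; the only place demanding slight care is the extraction of the diagonal term from an infinite series, but this is harmless because \eqref{eq:b1} guarantees convergence. Consequently I anticipate no real obstacle — the crux is simply the observation that evaluating the Parseval identity on the frame vectors couples the unit-norm hypothesis directly to pairwise orthogonality.
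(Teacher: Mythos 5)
Your proof is correct and follows essentially the same route as the paper: substitute $h=f_{n_0}$ into \eqref{eq:b1}, isolate the diagonal term $\left\Vert f_{n_0}\right\Vert^{4}$, and conclude that the off-diagonal inner products vanish when the norms equal one. You additionally spell out the trivial forward direction and the completeness step (zero orthogonal complement via \eqref{eq:b1}), which the paper leaves implicit, but the crux of the argument is identical.
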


\begin{proof}
Fix $n_{0}$ and assume \eqref{eq:b1}, then
\[
\left\Vert f_{n_{0}}\right\Vert _{\mathscr{H}_{K}}^{2}=\left\Vert f_{n_{0}}\right\Vert _{\mathscr{H}_{K}}^{4}+\sum_{n\neq n_{0}}\left|\left\langle f_{n},f_{n_{0}}\right\rangle _{\mathscr{H}_{K}}\right|^{2},
\]
and so $\left\langle f_{n},f_{n_{0}}\right\rangle _{\mathscr{H}_{K}}=0$,
for all $n\in\mathbb{N}\backslash\left\{ n_{0}\right\} $, if $\left\Vert f_{n_{0}}\right\Vert _{\mathscr{H}_{K}}=1$.
\end{proof}
\begin{lem}[Kernel representation]
\label{lem:b3}Let $K$ be a p.d. kernel on $X\times X$. 
\begin{enumerate}
\item \label{enu:b1}A system of functions $\left\{ f_{n}\right\} $ on
$X$ is a Parseval frame for $\mathscr{H}_{K}$ if and only if 
\begin{equation}
K\left(x,y\right)=\sum_{n}\overline{f_{n}\left(x\right)}f_{n}\left(y\right),\quad x,y\in X.\label{eq:b2}
\end{equation}
Moreover, when \eqref{eq:b2} holds, then $\left(f_{n}\left(x\right)\right)\in l^{2}$
for all $x\in X$. 
\item \label{enu:b2}Further, if all the $f_{n}$\textquoteright s are distinct,
i.e., each with multiplicity one, then $\left\{ f_{n}\right\} $ is
an ONB. 
\end{enumerate}
\end{lem}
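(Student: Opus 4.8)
The plan is to prove the two directions of the equivalence in (\ref{enu:b1}) separately, extract the ``moreover'' clause along the way, and then deduce (\ref{enu:b2}) from \prettyref{lem:b2}. Throughout I will use the reproducing property $f(x)=\langle K_{x},f\rangle_{\mathscr{H}_{K}}$ together with the fact that, since point evaluation at $x$ is the bounded functional $f\mapsto\langle K_{x},f\rangle$ (with $|f(x)|\le\sqrt{K(x,x)}\,\|f\|_{\mathscr{H}_{K}}$), any series converging in the $\mathscr{H}_{K}$-norm also converges pointwise on $X$.

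For the direction ``Parseval frame $\Rightarrow$ (\ref{eq:b2})'': assume $\{f_{n}\}\subset\mathscr{H}_{K}$ satisfies (\ref{eq:b1}). The Parseval identity is equivalent (by polarization) to the reconstruction formula $h=\sum_{n}\langle f_{n},h\rangle_{\mathscr{H}_{K}}f_{n}$ for every $h\in\mathscr{H}_{K}$. Applying this with $h=K_{x}$ and using $\langle f_{n},K_{x}\rangle_{\mathscr{H}_{K}}=\overline{\langle K_{x},f_{n}\rangle}=\overline{f_{n}(x)}$, one obtains $K_{x}=\sum_{n}\overline{f_{n}(x)}\,f_{n}$ in $\mathscr{H}_{K}$; evaluating at $y$ (legitimate by the pointwise-convergence remark) and invoking the Hermitian symmetry of $K$ produces (\ref{eq:b2}). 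The ``moreover'' clause falls out by taking $h=K_{x}$ in (\ref{eq:b1}): since $\langle f_{n},K_{x}\rangle=\overline{f_{n}(x)}$ and $\|K_{x}\|_{\mathscr{H}_{K}}^{2}=K(x,x)$, we get $\sum_{n}|f_{n}(x)|^{2}=K(x,x)<\infty$, so $(f_{n}(x))_{n}\in\ell^{2}$.

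For the converse ``(\ref{eq:b2}) $\Rightarrow$ Parseval frame'': the diagonal $x=y$ of (\ref{eq:b2}) already gives $\sum_{n}|f_{n}(x)|^{2}=K(x,x)<\infty$, so the feature map $\Phi(x):=(f_{n}(x))_{n}$ is a well-defined element of $\ell^{2}$ with $K(x,y)=\langle\Phi(x),\Phi(y)\rangle_{\ell^{2}}$ (up to the conjugation convention of \prettyref{def:b1}). I would then invoke the canonical correspondence between feature representations and the RKHS (\prettyref{prop:c3}): the assignment $K_{x}\mapsto\Phi(x)$ extends to an isometry $U:\mathscr{H}_{K}\to\ell^{2}$, whose adjoint $U^{*}:\ell^{2}\to\mathscr{H}_{K}$ satisfies $U^{*}U^{**}=U^{*}U=I$ and is thus a co-isometry. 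Testing against the $K_{x}$ via the reproducing property identifies $U^{*}e_{n}=f_{n}$ (where $\{e_{n}\}$ is the standard ONB of $\ell^{2}$); in particular each $f_{n}\in\mathscr{H}_{K}$. Since the image of an ONB under a co-isometry $W$ is always a Parseval frame — indeed $\sum_{n}|\langle We_{n},h\rangle|^{2}=\sum_{n}|\langle e_{n},W^{*}h\rangle|^{2}=\|W^{*}h\|^{2}=\langle WW^{*}h,h\rangle=\|h\|^{2}$ — the family $\{f_{n}\}$ is a Parseval frame for $\mathscr{H}_{K}$, completing (\ref{enu:b1}).

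Finally, for (\ref{enu:b2}) I would apply \prettyref{lem:b2} to the Parseval frame $\{f_{n}\}$ just produced: by that lemma it suffices to verify $\|f_{n}\|_{\mathscr{H}_{K}}=1$ for each $n$, which is what the hypothesis that the $f_{n}$ occur with multiplicity one is meant to supply. I expect the main obstacle to lie in the converse direction of (\ref{enu:b1}) — verifying that each $f_{n}$ genuinely lands in $\mathscr{H}_{K}$ and assembling $U^{*}$ with the correct placement of complex conjugates — together with pinning down precisely how the ``distinct / multiplicity one'' condition in (\ref{enu:b2}) forces the unit-norm hypothesis of \prettyref{lem:b2}, since a Parseval frame of merely pairwise-distinct vectors need not be orthonormal.
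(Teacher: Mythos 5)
Your part (1) is correct and essentially the standard argument: the forward direction by applying the reconstruction formula to $h=K_{x}$ and evaluating pointwise, the $l^{2}$ clause from the diagonal identity $\sum_{n}|f_{n}(x)|^{2}=\Vert K_{x}\Vert_{\mathscr{H}_{K}}^{2}=K(x,x)$, and the converse via the isometry $K_{x}\mapsto(f_{n}(x))_{n}$ into $l^{2}$ whose adjoint carries the standard ONB to a Parseval frame. The paper simply cites the literature for this equivalence and records the same diagonal computation, so your write-up is more self-contained but not a different route.

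The gap you flag in part (2) is genuine, and it cannot be closed as the statement stands. Both you and the paper reduce (2) to \prettyref{lem:b2} (the paper asserts $\mathscr{H}_{K}\simeq l^{2}$ via $f_{n}\mapsto e_{n}$, which presupposes the orthonormality to be proved), and \prettyref{lem:b2} requires $\Vert f_{n}\Vert_{\mathscr{H}_{K}}=1$ for every $n$; mere pairwise distinctness of the $f_{n}$ does not supply this. Concretely, let $X=\{1,2\}$ and $K(x,y)=\delta_{xy}$, so that $\mathscr{H}_{K}\cong\mathbb{C}^{2}$, and let $f_{n}(x)$ be the $x$-th coordinate of $v_{n}=\sqrt{2/3}\,\bigl(\cos(2\pi n/3),\sin(2\pi n/3)\bigr)$ for $n=1,2,3$. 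These three functions are distinct and satisfy \eqref{eq:b2} (since $\sum_{n}v_{n}v_{n}^{T}=I$), hence form a Parseval frame for $\mathscr{H}_{K}$ by part (1); yet each has norm $\sqrt{2/3}<1$, and three vectors cannot be an ONB of a two-dimensional space. So (2) needs a hypothesis stronger than distinctness --- for instance that the frame is exact (equivalently, linearly independent), or directly that each $f_{n}$ has unit norm --- and your refusal to paper over this step is the right call.
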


\begin{proof}
For \eqref{enu:b1}, see e.g., \cite{MR3526117}. Note $\sum_{n}\left|f_{n}\left(x\right)\right|^{2}=\left\Vert K_{x}\right\Vert _{\mathscr{H}}^{2}=K\left(x,x\right)<\infty$,
for all $x\in X$. 

Part \eqref{enu:b2} follows from the argument in \prettyref{lem:b2},
where $\mathscr{H}_{K}\simeq l^{2}$ with the isomorphism $f_{n}\mapsto e_{n}$. 
\end{proof}
\begin{prop}[Products of p.d. kernels]
\label{prop:b4}Let $K$ and $L$ be p.d. kernels defined on $X\times X$.
Set $M=KL$ as follows: $M\left(x,y\right)\coloneqq K\left(x,y\right)L\left(x,y\right)$
for $\left(x,y\right)\in X\times X$. Then $M$ is also p.d. on $X\times X$. 
\end{prop}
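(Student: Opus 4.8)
The plan is to reduce $M=KL$ to the representation form of \prettyref{lem:b3}, thereby exhibiting $M$ as a sum of ``rank-one'' products and reading off positive definiteness directly. First I would invoke \prettyref{lem:b3}\eqref{enu:b1}: since $K$ and $L$ are p.d. on $X\times X$, they admit Parseval-frame representations
\[
K(x,y)=\sum_{n}\overline{f_{n}(x)}f_{n}(y),\qquad L(x,y)=\sum_{m}\overline{g_{m}(x)}g_{m}(y),
\]
with $(f_{n}(x))\in\ell^{2}$ and $(g_{m}(x))\in\ell^{2}$ for every $x\in X$. Multiplying the two series and collecting terms, I would set $h_{n,m}(x)\coloneqq f_{n}(x)g_{m}(x)$ and aim for the identity
\[
M(x,y)=K(x,y)L(x,y)=\sum_{n,m}\overline{h_{n,m}(x)}\,h_{n,m}(y),
\]
which is again of the form \eqref{eq:b2}.

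Granting this identity, positive definiteness is immediate: for any finite $\left(x_{i}\right)_{i=1}^{N}$ in $X$ and $\left(c_{i}\right)$ in $\mathbb{C}$,
\[
\sum_{i,j}\overline{c_{i}}c_{j}M(x_{i},x_{j})=\sum_{n,m}\Big|\sum_{i}c_{i}h_{n,m}(x_{i})\Big|^{2}\geq0,
\]
after interchanging the finite sum over $i,j$ with the sum over $(n,m)$. Equivalently, one may observe that $\left\{ h_{n,m}\right\}$ is then itself a Parseval frame for $\mathscr{H}_{M}$ by the converse half of \prettyref{lem:b3}\eqref{enu:b1}.

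The one point requiring care --- and the only genuine obstacle --- is the rearrangement that produces the double-sum identity. For fixed $x,y$, both defining series converge absolutely by Cauchy--Schwarz, since the coefficient sequences lie in $\ell^{2}$; hence their Cauchy product is the absolutely convergent double series $\sum_{n,m}\overline{f_{n}(x)g_{m}(x)}\,f_{n}(y)g_{m}(y)$, which legitimizes the regrouping into $h_{n,m}$. The same $\ell^{2}$ bound confirms $(h_{n,m}(x))_{n,m}\in\ell^{2}$, since $\sum_{n,m}\left|h_{n,m}(x)\right|^{2}=\big(\sum_{n}\left|f_{n}(x)\right|^{2}\big)\big(\sum_{m}\left|g_{m}(x)\right|^{2}\big)=K(x,x)L(x,x)<\infty$, so the right-hand side is well defined pointwise; and interchanging the outer finite sum over $i,j$ with the double series over $(n,m)$ is then justified because that series converges absolutely for each of the finitely many pairs $(x_{i},x_{j})$.

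As an alternative I could argue entirely at the level of finite matrices: for each finite sample the Gram matrices $[K(x_{i},x_{j})]$ and $[L(x_{i},x_{j})]$ are Hermitian positive semidefinite, and $[M(x_{i},x_{j})]$ is their Hadamard product, which is positive semidefinite by Schur's product theorem (writing one factor via its spectral decomposition $\sum_{k}\lambda_{k}u_{k}u_{k}^{*}$ and noting that $A\circ(u_{k}u_{k}^{*})=DAD^{*}$ with $D=\mathrm{diag}(u_{k})$ is a congruence of $A$, hence positive semidefinite, and that sums of such are positive semidefinite). A third, coordinate-free route is to take feature maps $\phi,\psi$ with $K(x,y)=\left\langle \phi(x),\phi(y)\right\rangle$ and $L(x,y)=\left\langle \psi(x),\psi(y)\right\rangle$ and use $x\mapsto\phi(x)\otimes\psi(x)$ as a feature map for $M$ in $\mathscr{H}\otimes\mathscr{K}$; I prefer the \prettyref{lem:b3} argument, since the requisite representation is already available in the paper.
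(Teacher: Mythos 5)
Your proof is correct, and its core mechanism is the same as the paper's: both rest on the Parseval-frame representation of \prettyref{lem:b3}. The difference is one of bookkeeping. The paper expands only $K$ as $\sum_{n}\overline{f_{n}(x)}f_{n}(y)$ and then, for each fixed $n$, recognizes the inner double sum $\sum_{i,j}\overline{c_{i}f_{n}(x_{i})}\,c_{j}f_{n}(x_{j})L(x_{i},x_{j})$ as a p.d.\ quadratic form for $L$ with the modified coefficients $c_{i}f_{n}(x_{i})$; this sidesteps any product of infinite series. You expand both kernels and regroup into the doubly indexed system $h_{n,m}=f_{n}g_{m}$, which forces you to justify the Cauchy-product rearrangement --- you do so correctly via the $\ell^{2}$ bounds $\sum_{n}|f_{n}(x)|^{2}=K(x,x)$ and $\sum_{m}|g_{m}(x)|^{2}=L(x,x)$ --- and in return you get an explicit sum-of-squares identity and, via the converse direction of \prettyref{lem:b3}, a Parseval frame $\left\{ h_{n,m}\right\}$ for $\mathscr{H}_{M}$, which the paper's argument does not produce. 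Your third alternative (the tensor feature map $x\mapsto\phi(x)\otimes\psi(x)$) is in fact exactly \prettyref{prop:3-7} later in the paper, and the Schur-product argument is the classical finite-dimensional shadow of the same idea. No gaps.
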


\begin{proof}
Pick the representation \eqref{eq:b2} for $K$, and consider $N\in\mathbb{N}$,
$c_{i}\in\mathbb{R}$ (or $\mathbb{C}$), $x_{i}\in X$, $1\leq i\leq N$.
Then we get the desired conclusion as follows: 
\begin{multline*}
\sum_{i}\sum_{j}\overline{c_{i}}c_{j}M\left(x_{i},x_{j}\right)\\
=\sum_{n}\left(\sum_{i}\sum_{j}\left(\overline{c_{i}f_{n}\left(x_{i}\right)}\right)\left(c_{j}f_{n}\left(x_{j}\right)\right)L\left(x_{i},x_{j}\right)\right)\geq0,
\end{multline*}
where we used the p.d. property of $L$ in the last step. 
\end{proof}
\begin{defn}[\emph{Loewner order, see e.g., \cite{MR0486556,MR24487}}]
 \label{def:order}For p.d. kernels $K,L$ on $X\times X$, we say
$K\leq L$ if $L-K$ is p.d. 
\end{defn}

\begin{lem}
If $K:X\times X\rightarrow\mathbb{C}$ is p.d. and $K\geq1$, then
$K^{n}\leq K^{n+1}$, $n\in\mathbb{N}$. 
\end{lem}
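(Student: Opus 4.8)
The plan is to reduce everything to the Schur product theorem already recorded in \prettyref{prop:b4}. The decisive observation is the elementary pointwise factorization
\[
K^{n+1}\left(x,y\right)-K^{n}\left(x,y\right)=K^{n}\left(x,y\right)\bigl(K\left(x,y\right)-1\bigr),\quad\left(x,y\right)\in X\times X,
\]
which exhibits the difference kernel $K^{n+1}-K^{n}$ as the pointwise (Hadamard) product of the two kernels $K^{n}$ and $K-\mathbf{1}$, where $\mathbf{1}$ denotes the constant kernel $\mathbf{1}\left(x,y\right)\equiv1$.

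First I would check that each factor is p.d. That $K^{n}$ is p.d. for every $n\in\mathbb{N}$ follows by induction from \prettyref{prop:b4}: the base case is $K^{1}=K$, and the inductive step $K^{m+1}=K^{m}K$ is again p.d. as a product of two p.d. kernels. For the second factor, the hypothesis $K\geq1$ unwinds, via the definition of the Loewner order in \prettyref{def:order}, to exactly the statement that $K-\mathbf{1}$ is p.d.; note that $\mathbf{1}$ is itself p.d., being the rank-one kernel associated with the constant feature map $x\mapsto1$.

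Finally, applying \prettyref{prop:b4} to the two p.d. kernels $K^{n}$ and $K-\mathbf{1}$ shows that their pointwise product $K^{n}\left(K-\mathbf{1}\right)=K^{n+1}-K^{n}$ is p.d., which is precisely the assertion $K^{n}\leq K^{n+1}$ in the Loewner order. I do not expect any real obstacle here; the only points requiring care are getting the algebraic factorization right and correctly reading the inequality $K\geq1$ as positivity of the difference $K-\mathbf{1}$ rather than as a pointwise bound on the values of $K$.
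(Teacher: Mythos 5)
Your proposal is correct and follows exactly the paper's own argument: the factorization $K^{n+1}-K^{n}=K^{n}\left(K-1\right)$ combined with the closure of p.d. kernels under Hadamard products (\prettyref{prop:b4}). You simply spell out the details (induction for $K^{n}$, reading $K\geq1$ as positivity of $K-\mathbf{1}$) that the paper's one-line proof leaves implicit.
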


\begin{proof}
Recall that products of p.d. kernels are p.d. (see \prettyref{prop:b4},
and also \prettyref{prop:3-7}). Therefore, $K^{n+1}-K^{n}=K^{n}\left(K-1\right)\geq0$. 
\end{proof}
\begin{example}
Let $K\left(z,w\right)=\left(1-\overline{w}z\right)^{-1}$ be the
Szeg\H{o} kernel on $\mathbb{D}\times\mathbb{D}$, where $\mathbb{D}=\left\{ z\in\mathbb{C}:\left|z\right|<1\right\} $.
Then 
\[
\left(1-\overline{w}z\right)^{-1}=\sum_{n=0}^{\infty}\overline{w}^{n}z^{n}\geq1,
\]
in the sense of \prettyref{def:order}, and so 
\[
1\leq K\leq K^{2}\leq\cdots\leq K^{n}.
\]
More specifically, 
\[
1\leq\frac{1}{1-\overline{w}z}\leq\frac{1}{\left(1-\overline{w}z\right)^{2}}\leq\frac{1}{\left(1-\overline{w}z\right)^{3}}\leq\cdots\leq\frac{1}{\left(1-\overline{w}z\right)^{n}}.
\]
Here, $K^{2}\left(z,w\right)=\left(1-\overline{w}z\right)^{-2}$ is
the Bergman kernel. 

Throughout the paper, we will revisit this family of kernels and its
variations, both for motivations and for illustrations of general
results. The reader may refer to Examples \ref{exa:c4}, \ref{exa:c6},
\ref{exa:d2}, \ref{exa:f5}, \ref{exa:f3}, as well as Corollaries
\ref{cor:d3}, \ref{cor:g3}, and \prettyref{rem:e11}.
\end{example}

\begin{prop}[Monotonicity]
Consider two pairs of p.d. kernels $K_{i}$ and $L_{i}$, and form
the product $P_{i}:=K_{i}L_{i}$, $i=1,2$. If $K_{1}\leq K_{2}$,
$L_{1}\leq L_{2}$, then $P_{1}\leq P_{2}$.
\end{prop}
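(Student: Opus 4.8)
The plan is to reduce the claim entirely to two facts already at hand: that a pointwise product of two p.d. kernels is again p.d. (\prettyref{prop:b4}), and that a pointwise sum of p.d. kernels is p.d. (immediate, since the defining double sum $\sum_{i,j}\overline{c_{i}}c_{j}(\,\cdot\,)\left(x_{i},x_{j}\right)$ is additive in the kernel). Recall that the order relation $K_{1}\leq K_{2}$ means exactly that $K_{2}-K_{1}$ is p.d., and likewise $L_{2}-L_{1}$ is p.d.; the goal is therefore to write $P_{2}-P_{1}=K_{2}L_{2}-K_{1}L_{1}$ as a sum of products of p.d. kernels.

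The key step is a telescoping regrouping. I would insert the intermediate term $K_{1}L_{2}$ and collect:
\[
K_{2}L_{2}-K_{1}L_{1}=\left(K_{2}-K_{1}\right)L_{2}+K_{1}\left(L_{2}-L_{1}\right).
\]
(The symmetric splitting $\left(K_{2}-K_{1}\right)L_{1}+K_{2}\left(L_{2}-L_{1}\right)$ works just as well.) Each of the two summands is a product of two p.d. kernels: on the left factor, $K_{2}-K_{1}$ is p.d. by hypothesis while $L_{2}$ is p.d. as a standing assumption, so \prettyref{prop:b4} applies; on the right, $K_{1}$ is p.d. by assumption and $L_{2}-L_{1}$ is p.d. by hypothesis, so \prettyref{prop:b4} applies again. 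Hence both products are p.d., and their sum $P_{2}-P_{1}$ is p.d., which is precisely $P_{1}\leq P_{2}$.

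The only point requiring any care — and what passes for the main obstacle here — is that the telescoping relies not merely on the two \emph{differences} being p.d., but also on the \emph{un-differenced} factors $L_{2}$ and $K_{1}$ being p.d. in their own right. I would therefore make explicit the use of the proposition's standing hypothesis that all four kernels $K_{1},K_{2},L_{1},L_{2}$ are themselves p.d., since it is exactly this that licenses the two invocations of \prettyref{prop:b4}. Beyond that bookkeeping, the argument is routine and carries over verbatim to finite products of matched chains $K_{1}^{(m)}\leq K_{2}^{(m)}$ by iterating the same one-term-at-a-time replacement.
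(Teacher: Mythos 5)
Your proposal is correct and uses exactly the same telescoping decomposition $P_{2}-P_{1}=\left(K_{2}-K_{1}\right)L_{2}+K_{1}\left(L_{2}-L_{1}\right)$ as the paper's own proof, combined with closure of p.d. kernels under products and sums. Your extra remark that the un-differenced factors $L_{2}$ and $K_{1}$ must themselves be p.d. is a useful explicit acknowledgment of a hypothesis the paper uses silently, but the argument is the same.
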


\begin{proof}
By assumption, we get the following conclusion: 
\[
P_{2}-P_{1}=K_{2}L_{2}-K_{1}L_{1}=\left(K_{2}-K_{1}\right)L_{2}+K_{1}\left(L_{2}-L_{1}\right)\geq0.
\]
\end{proof}
Since our paper is interdisciplinary it combines topics from pure
and applied. Of special significance to the above are the following
citations \cite{MR4302453,MR1698952,MR4291375,MR4126821,MR4300781,MR4655783,MR4734568}.

\section{\label{sec:3}Feature space realizations}

The purpose of the present section is to outline key links connection
the notions from \prettyref{sec:2} to the present applications. We
outline the main connections between the key pure math notions (especially
kernel-duality introduced above), and the applied notions, focusing
on feature selection, feature maps, and kernel-machines.

With \textquotedblleft feature selection\textquotedblright{} we refer
to the part of machine learning that identifies the \textquotedblleft best\textquotedblright{}
insights into phenomena/observations. A feature is input, i.e., a
measurable property of the phenomena. In statistical learning, features
are often identified with choices of independent random variables,
typically identically distributed (i.i.d.); see below. More generally,
learning algorithms serve to identify features that yield better models.
Features come in several forms, for example they might be numeric,
or qualitative features. \textquotedblleft Good\textquotedblright{}
feature selections in turn let us identify the important or significant
patterns that distinguish between data forms and instances. Indeed,
as we recall, machine learning is truly multidisciplinary, as is reflected
in for example, how features are viewed. Example: a geometric view,
treating features as tuples, or vectors in a high-dimensional space,
the feature space. Equally important is the probabilistic perspective,
i.e., viewing features as multivariate random variables. The following
references may be helpful, \cite{MR4689655,MR4586941,MR4241109,MR2597189,MR4300781,MR3173967,MR3875593},
and \cite{MR4601874}. An important part of the tools that go into
feature selection in the statistical setting is known as principal
component analysis (PCA) \cite{MR3454256}. It is a dimensionality
reduction of features, i.e., a reduction of the dimensionality of
large data sets. With the use of choices of covariance kernels, it
allows one to transform large sets of variables into smaller ones
that still contains most of the information in the large set; see
e.g., \cite{MR4769269}.

As noted above, in applications such as data analysis, the initial
set $X$ is general and typically unstructured. In particular, in
applications, choices of sets $X$ may not have any linear structure.
But, nonetheless, in the design of optimization models (for example
in statistical inference, and in machine learning models), there will
in fact be natural choices of families of positive definite (p.d.)
kernels, specified on $X\times X$. Each such p.d. kernel will then
yield an RKHS, denoted here $\mathscr{H}_{K}$. And $\mathscr{H}_{K}$
does present one possible choice of feature space (\prettyref{def:C1}),
but applications dictate a qualitative and quantitative comparison
within the variety of feature spaces for a single p.d. kernel $K$.
In particular, we study the possibility of a second p.d. kernel, say
$L$, serving to generate a feature space for $K$ (\prettyref{prop:c3}.)
These themes are addressed below. We further study operations on the
variety of p.d. kernels, as they relate to feature space selection
questions.
\begin{defn}[\emph{Feature map}, and \emph{feature space}]
\label{def:C1}Given a p.d. kernel $K:X\times X\rightarrow\mathbb{C}$
defined on a set $X$, a Hilbert space $\mathscr{L}$ is said to be
a feature space for $K$ if there is a map $\varphi:X\rightarrow\mathscr{L}$,
such that $K\left(x,y\right)=\left\langle \varphi\left(x\right),\varphi\left(y\right)\right\rangle _{\mathscr{L}}$,
for all $x,y\in X$. Set 
\[
H_{S}\left(K\right)\coloneqq\left\{ \left(\varphi,\mathscr{L}\right):K\left(x,y\right)=\left\langle \varphi\left(x\right),\varphi\left(y\right)\right\rangle _{\mathscr{L}},\:x,y\in X\right\} .
\]
\end{defn}

\begin{rem}
$H_{S}\left(K\right)\neq\emptyset$. Some basic examples include:
\begin{enumerate}
\item $\varphi\left(x\right)=K_{x}$, $\mathscr{L}=\mathscr{H}_{K}$ (the
RKHS of $K$), and $K\left(x,y\right)=\left\langle K_{x},K_{y}\right\rangle _{\mathscr{H}_{K}}$.
\item $\varphi\left(x\right)=\delta_{x}$, $\mathscr{L}=\overline{span}\left\{ \delta_{x}:x\in X\right\} $,
where the Hilbert completion is with respect to 
\[
\left\Vert \sum_{i=1}^{N}c_{i}\delta_{x}\right\Vert _{\mathscr{L}}^{2}=\sum_{i,j=1}^{N}\overline{c_{i}}c_{j}K\left(x_{i},x_{j}\right),
\]
and $K\left(x,y\right)=\left\langle \delta_{x},\delta_{y}\right\rangle _{\mathscr{L}}$. 
\item $\varphi\left(x\right)=W_{x}\sim N\left(0,K\left(x,x\right)\right)$,
i.e., $\left(W_{x}\right)_{x\in X}$ is a mean zero Gaussian field,
realized in $\mathscr{L}=L^{2}\left(\Omega,\mathbb{P}\right)$; and
$K\left(x,y\right)=\left\langle W_{x},W_{y}\right\rangle _{L^{2}\left(\mathbb{P}\right)}$. 
\end{enumerate}
More general constructions are considered below.

\end{rem}

\begin{prop}
\label{prop:c3}Given a p.d. kernel $K$ on $X\times X$, then for
every Hilbert space $\mathscr{L}$ with $\dim\mathscr{L}=\dim\mathscr{H}_{K}$,
there exists $\varphi$ such that $\left(\varphi,\mathscr{L}\right)\in H_{S}\left(K\right)$. 
\end{prop}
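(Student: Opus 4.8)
The plan is to reduce everything to a single observation: the RKHS $\mathscr{H}_{K}$ is \emph{itself} a feature space, and any two separable Hilbert spaces of equal dimension are unitarily isomorphic. First I would recall from the Remark following \prettyref{def:C1} (item 1) that $\left(\varphi_{0},\mathscr{H}_{K}\right)\in H_{S}\left(K\right)$, where $\varphi_{0}\left(x\right)=K_{x}$; indeed, the reproducing property yields $\left\langle K_{x},K_{y}\right\rangle _{\mathscr{H}_{K}}=K\left(x,y\right)$. Thus a feature-space realization of $K$ already exists on $\mathscr{H}_{K}$, and the only task is to transport it onto an arbitrary $\mathscr{L}$ of the same dimension.

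Next I would invoke the standard classification of (separable, as assumed throughout the paper) Hilbert spaces: two such spaces are unitarily isomorphic if and only if they share the same Hilbert-space dimension. Since $\dim\mathscr{L}=\dim\mathscr{H}_{K}$ by hypothesis, I may fix a unitary $U:\mathscr{H}_{K}\rightarrow\mathscr{L}$; concretely, choosing ONBs $\left\{ e_{n}\right\} $ of $\mathscr{H}_{K}$ and $\left\{ e_{n}'\right\} $ of $\mathscr{L}$ indexed by a common set, one defines $Ue_{n}=e_{n}'$ and extends by linearity and isometry. I would then set $\varphi\coloneqq U\circ\varphi_{0}$, i.e., $\varphi\left(x\right)=UK_{x}$, and verify the feature-space identity using that $U$ preserves inner products:
\[
\left\langle \varphi\left(x\right),\varphi\left(y\right)\right\rangle _{\mathscr{L}}=\left\langle UK_{x},UK_{y}\right\rangle _{\mathscr{L}}=\left\langle K_{x},K_{y}\right\rangle _{\mathscr{H}_{K}}=K\left(x,y\right),
\]
valid for all $x,y\in X$, whence $\left(\varphi,\mathscr{L}\right)\in H_{S}\left(K\right)$.

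There is no genuinely hard step: the content of the proposition is just that membership in $H_{S}\left(K\right)$ is stable under post-composition with a unitary, together with the fact that dimension is the complete unitary invariant of a Hilbert space. The only point meriting a line of justification is the existence of the unitary $U$, which rests on producing orthonormal bases of matching cardinality — routine in the separable setting assumed here.
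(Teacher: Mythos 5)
Your proof is correct and is essentially the paper's argument in different packaging: the paper defines $\varphi\left(x\right)=\sum_{n}f_{n}\left(x\right)\zeta_{n}$ for matched ONBs $\left\{ f_{n}\right\} \subset\mathscr{H}_{K}$ and $\left\{ \zeta_{n}\right\} \subset\mathscr{L}$, which is (up to a harmless conjugation of coefficients) exactly your $UK_{x}$ for the unitary $Uf_{n}=\zeta_{n}$. Your formulation has the minor advantage that well-definedness of $\varphi$ is automatic, whereas the paper must cite \prettyref{lem:b3} to see that $\left(f_{n}\left(x\right)\right)\in l^{2}$.
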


\begin{proof}
Let $\left\{ f_{n}\right\} $ be an ONB in $\mathscr{H}_{K}$, and
$\left\{ \zeta_{n}\right\} $ an ONB in $\mathscr{L}$. Then the map
\begin{equation}
\varphi\left(x\right)=\sum_{n}f_{n}\left(x\right)\zeta_{n}\in\mathscr{L}\label{eq:s3}
\end{equation}
is well defined since $\left(f_{n}\left(x\right)\right)\in l^{2}$,
$x\in X$ (\prettyref{lem:b3}), and 
\[
\left\langle \varphi\left(x\right),\varphi\left(y\right)\right\rangle _{\mathscr{L}}=\sum_{n}\overline{f_{n}\left(x\right)}f_{n}\left(y\right)=K\left(x,y\right),\quad x,y\in X.
\]
\end{proof}
\begin{example}
\label{exa:c4}Let $K:X\times X\rightarrow\mathbb{C}$ be p.d., and
$\mathscr{H}_{K}$ the associated RKHS. Let $\left\{ f_{n}\right\} $
be an ONB for $\mathscr{H}_{K}$. 
\begin{enumerate}
\item Let $\left\{ Z_{n}\right\} $ be a sequence of i.i.d. Gaussian random
variables, where $Z_{n}\sim N\left(0,1\right)$, realized on a probability
space $L^{2}\left(\Omega,\mathbb{P}\right)$. Here, one may take $\Omega=\prod_{\mathbb{N}}\mathbb{R}$,
equipped with the $\sigma$-algebra $\mathscr{C}$ generated by the
cylinder sets. Define 
\[
\varphi\left(x\right)=\sum_{n}f_{n}\left(x\right)Z_{n}\left(\cdot\right)\in L^{2}\left(\Omega,\mathbb{P}\right).
\]
Then $\left(\varphi,L^{2}\left(\Omega,\mathbb{P}\right)\right)\in H_{S}\left(K\right)$,
and 
\[
K\left(x,y\right)=\left\langle \varphi\left(x\right),\varphi\left(y\right)\right\rangle _{L^{2}\left(\Omega,\mathbb{P}\right)}.
\]
\item The above holds, in particular, when $K$ is the reproducing kernel
of the Bergman space $B_{2}\left(\Omega\right)$, $\Omega\subset\mathbb{C}^{n}$.
For $n=1$, $\Omega=\mathbb{D}$, 
\[
K\left(z,w\right)=\left(1-\overline{w}z\right)^{-2}=\sum_{n=0}^{\infty}\left(n+1\right)\overline{w}^{n}z^{n},
\]
where $\left\{ \sqrt{1+n}z^{n}\right\} _{n\in\mathbb{N}_{0}}$ is
an ONB for $\mathscr{H}_{K}$. Setting 
\[
\varphi\left(z\right)=\sum_{n=0}^{\infty}\sqrt{n+1}z^{n}Z_{n}\left(\cdot\right)\in L^{2}\left(\Omega,\mathbb{P}\right),
\]
then 
\[
K\left(z,w\right)=\left\langle \varphi\left(z\right),\varphi\left(w\right)\right\rangle _{L^{2}\left(\Omega,\mathbb{P}\right)}.
\]
\item Choose $\mathscr{L}$ to be any $L^{2}$-space, e.g., $\mathscr{L}=L^{2}\left(M,\mu\right)$.
Then, with 
\[
\varphi\left(x\right)=\sum_{n}f_{n}\left(x\right)\zeta_{n}\left(\cdot\right)\in L^{2}\left(M,\mu\right),
\]
we have
\[
K\left(x,y\right)=\int_{M}\overline{\varphi\left(x\right)}\varphi\left(y\right)d\mu=\left\langle \varphi\left(x\right),\varphi\left(y\right)\right\rangle _{L^{2}\left(\mu\right)}.
\]
Here the variable $m\in M$ is supposed. 
\end{enumerate}
\end{example}

\subsection{\label{subsec:3-1}A duality for feature selections}

In \prettyref{prop:c3}, the case when $\mathscr{L}$ is another RKHS
is of particular interest in the analysis below, as it offers a certain
symmetry between the two RKHSs and their feature selections. This
is stated as follows: 
\begin{prop}[duality]
 \label{prop:3-5}Let $K,L$ be p.d. kernels on $X\times X$, and
let $\mathscr{H}_{K},\mathscr{H}_{L}$ be the corresponding RKHSs.
Choose an ONB $\left\{ f_{n}\right\} $ for $\mathscr{H}_{K}$, and
$\left\{ g_{n}\right\} $ for $\mathscr{H}_{L}$. Define the following
vector-valued functions on $X$:
\begin{align*}
\varphi\left(x\right) & =\sum_{n}f_{n}\left(x\right)g_{n}\left(\cdot\right)\in\mathscr{H}_{L},\\
\psi\left(x\right) & =\sum_{n}f_{n}\left(\cdot\right)g_{n}\left(x\right)\in\mathscr{H}_{K}.
\end{align*}
Then, 
\begin{align*}
\left(\varphi,\mathscr{H}_{L}\right) & \in H_{S}\left(K\right),\;\text{and}\\
\left(\psi,\mathscr{H}_{K}\right) & \in H_{S}\left(L\right).
\end{align*}
\end{prop}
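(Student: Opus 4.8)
The plan is to verify the two claimed membership statements directly from the definition of $H_{S}$, by computing the inner products $\left\langle \varphi\left(x\right),\varphi\left(y\right)\right\rangle _{\mathscr{H}_{L}}$ and $\left\langle \psi\left(x\right),\psi\left(y\right)\right\rangle _{\mathscr{H}_{K}}$ and recovering $K$ and $L$ respectively. The two claims are completely symmetric under swapping the roles of $\left(K,\{f_n\}\right)$ and $\left(L,\{g_n\}\right)$, so it suffices to treat the first one carefully and then invoke symmetry.

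First I would check that $\varphi$ is well defined as an element of $\mathscr{H}_{L}$. For fixed $x\in X$, the vector $\varphi\left(x\right)=\sum_{n}f_{n}\left(x\right)g_{n}$ is an expansion against the ONB $\{g_n\}$ of $\mathscr{H}_{L}$, so it lies in $\mathscr{H}_{L}$ precisely when the coefficient sequence is square-summable. By \prettyref{lem:b3}\eqref{enu:b1}, since $\{f_n\}$ is an ONB (hence a Parseval frame) for $\mathscr{H}_{K}$, we have $\sum_{n}\left|f_{n}\left(x\right)\right|^{2}=K\left(x,x\right)<\infty$, so $\left(f_{n}\left(x\right)\right)\in l^{2}$ and $\varphi\left(x\right)$ is indeed a legitimate vector in $\mathscr{H}_{L}$.

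Next I would compute the inner product. Using orthonormality of $\{g_n\}$ in $\mathscr{H}_{L}$ together with the Cauchy--Schwarz justification for interchanging the inner product with the (convergent) sums, I get
\begin{align*}
\left\langle \varphi\left(x\right),\varphi\left(y\right)\right\rangle _{\mathscr{H}_{L}}
&=\sum_{m}\sum_{n}\overline{f_{m}\left(x\right)}f_{n}\left(y\right)\left\langle g_{m},g_{n}\right\rangle _{\mathscr{H}_{L}}\\
&=\sum_{n}\overline{f_{n}\left(x\right)}f_{n}\left(y\right)=K\left(x,y\right),
\end{align*}
where the last equality is exactly the Parseval-frame representation \eqref{eq:b2} applied to the ONB $\{f_n\}$ of $\mathscr{H}_{K}$. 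This shows $\left(\varphi,\mathscr{H}_{L}\right)\in H_{S}\left(K\right)$. The statement for $\psi$ follows by the identical argument with $f$ and $g$ interchanged: $\sum_{n}\left|g_{n}\left(x\right)\right|^{2}=L\left(x,x\right)<\infty$ ensures $\psi\left(x\right)\in\mathscr{H}_{K}$, and orthonormality of $\{f_n\}$ gives $\left\langle \psi\left(x\right),\psi\left(y\right)\right\rangle _{\mathscr{H}_{K}}=\sum_{n}\overline{g_{n}\left(x\right)}g_{n}\left(y\right)=L\left(x,y\right)$.

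I do not expect a genuine obstacle here; the proof is essentially a double application of \prettyref{lem:b3}. The one point deserving a word of care is the legitimacy of pulling the inner product through the infinite sums and collapsing the double sum to a diagonal one. This is handled by continuity of the inner product together with $l^{2}$-convergence of the expansions $\varphi\left(x\right),\varphi\left(y\right)$, which I established in the first step; the orthonormality of $\{g_n\}$ then kills all off-diagonal terms. It is worth noting that the ONB hypothesis on $\{g_n\}$ is used only for orthonormality, while the ONB hypothesis on $\{f_n\}$ is used through the representation \eqref{eq:b2}; in fact a Parseval frame for $\mathscr{H}_{K}$ would suffice on the $f$-side for the first claim, which suggests the duality is slightly more robust than stated.
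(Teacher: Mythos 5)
Your proof is correct and follows essentially the same route as the paper's: well-definedness of $\varphi$ and $\psi$ via the $\ell^{2}$-summability from \prettyref{lem:b3}, then computation of the inner products using orthonormality and the Parseval-frame representation \eqref{eq:b2}. Your closing observation that only a Parseval frame is needed on the $f$-side is a reasonable mild strengthening, but the argument itself matches the paper's.
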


\begin{proof}
Note that $\varphi,\psi$ are well defined since, by \prettyref{lem:b3},
$\left(f_{n}\left(x\right)\right),\left(g_{n}\left(x\right)\right)\in l^{2}$
for all $x\in X$. Moreover, for all $x,y\in X$, 
\begin{align*}
\left\langle \varphi\left(x\right),\varphi\left(y\right)\right\rangle _{\mathscr{H}_{L}} & =\sum\overline{f_{n}\left(x\right)}f_{n}\left(y\right)=K\left(x,y\right),\;\text{and}\\
\left\langle \psi\left(x\right),\psi\left(y\right)\right\rangle _{\mathscr{H}_{K}} & =\sum\overline{g_{n}\left(x\right)}g_{n}\left(y\right)=L\left(x,y\right),
\end{align*}
which is the desired conclusion. 
\end{proof}
\begin{example}
\label{exa:c6}Consider the Szeg\H{o} kernel $K_{Sz}\left(x,y\right)=\left(1-\overline{w}z\right)^{-1}$,
$\left(w,z\right)\in\mathbb{D}\times\mathbb{D}$. Its RKHS is the
Hardy space $H_{2}\left(\mathbb{D}\right)=\left\{ \sum_{n=0}^{\infty}c_{n}z_{n}^{n}:\left(c_{n}\right)\in l^{2}\right\} $
with ONB $\left\{ z_{n}\right\} _{n\in\mathbb{N}_{0}}$. 

Let $K\left(x,y\right)=\left(1-xy\right)^{-1}=\sum_{n=0}^{\infty}x^{n}y^{n}$,
defined on $J\times J$, where $J=\left(-1,1\right)$. That is, $K=K_{Sz}\big|_{J\times J}$.
It follows from \prettyref{lem:b3} that $\left\{ x^{n}\right\} _{n\in\mathbb{N}_{0}}$
is an ONB for $\mathscr{H}_{K}$. Setting 
\[
\varphi\left(x\right)=\sum_{n\in\mathbb{N}_{0}}x^{n}z^{n},
\]
then $\left(\varphi,H_{2}\left(\mathbb{D}\right)\right)\in H_{S}\left(K\right)$
by \prettyref{prop:3-5}. 
\end{example}

\subsection{Operations}

A key property in the choice of Hilbert spaces when constructing feature
spaces for positive definite (p.d.) kernels is that tensor products
behave well; specifically, the category of Hilbert space is closed
under tensor product, meaning that the tensor product formed from
two Hilbert spaces is a new and canonically defined Hilbert space,
see e.g., \cite{MR3526117}. Here we take advantage of this geometric
fact, showing e.g., that if a p.d. kernel $M$ arises as a product
$M=KL$ (\prettyref{prop:b4}), then the feature spaces for $M$ arise
as tensor products of the feature spaces for the respective factors
$K$ and $L$.
\begin{prop}
\label{prop:3-7}Let $K_{i}$, $i=1,2$, be p.d. kernels on $X\times X$,
and let 
\begin{equation}
K\left(x,y\right)\coloneqq K_{1}\left(x,y\right)K_{2}\left(x,y\right),\quad x,y\in X,\label{eq:c2}
\end{equation}
the Hadamard product. Suppose 
\begin{equation}
\left(\varphi_{i},\mathscr{H}_{i}\right)\in H_{S}\left(K_{i}\right),\quad i=1,2.\label{eq:c3}
\end{equation}
Set $\mathscr{H}\coloneqq\mathscr{H}_{1}\otimes\mathscr{H}_{2}$ (as
tensor product in the category of Hilbert spaces), and 
\begin{equation}
\varphi\left(x\right)=\varphi_{1}\left(x\right)\otimes\varphi_{2}\left(x\right),\quad x\in X.\label{eq:c4}
\end{equation}
Then 
\[
\left(\varphi,\mathscr{H}\right)\in H_{S}\left(K\right).
\]
\end{prop}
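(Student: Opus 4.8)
The plan is to establish the single identity $K(x,y)=\left\langle \varphi(x),\varphi(y)\right\rangle _{\mathscr{H}}$ for all $x,y\in X$ by a direct computation, the only ingredient being the defining rule for the inner product on the Hilbert space tensor product $\mathscr{H}_{1}\otimes\mathscr{H}_{2}$ evaluated on decomposable (elementary) tensors, namely
\[
\left\langle u_{1}\otimes u_{2},v_{1}\otimes v_{2}\right\rangle _{\mathscr{H}_{1}\otimes\mathscr{H}_{2}}=\left\langle u_{1},v_{1}\right\rangle _{\mathscr{H}_{1}}\left\langle u_{2},v_{2}\right\rangle _{\mathscr{H}_{2}}.
\]
This factorization is exactly the expression of the fact, recalled just before the statement, that the category of Hilbert spaces is closed under tensor product.

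First I would observe that $\varphi(x)=\varphi_{1}(x)\otimes\varphi_{2}(x)$ given by \eqref{eq:c4} is a well-defined vector in $\mathscr{H}$: it is the elementary tensor of $\varphi_{1}(x)\in\mathscr{H}_{1}$ and $\varphi_{2}(x)\in\mathscr{H}_{2}$, both of which exist by the hypothesis \eqref{eq:c3} that $(\varphi_{i},\mathscr{H}_{i})\in H_{S}(K_{i})$. Since $\varphi(x)$ is a single decomposable tensor rather than an infinite sum, no convergence issue arises at this step.

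Next I would apply the factorization rule above with $u_{i}=\varphi_{i}(x)$ and $v_{i}=\varphi_{i}(y)$, which gives
\[
\left\langle \varphi\left(x\right),\varphi\left(y\right)\right\rangle _{\mathscr{H}}=\left\langle \varphi_{1}\left(x\right),\varphi_{1}\left(y\right)\right\rangle _{\mathscr{H}_{1}}\left\langle \varphi_{2}\left(x\right),\varphi_{2}\left(y\right)\right\rangle _{\mathscr{H}_{2}}.
\]
Invoking the feature-map property $\left\langle \varphi_{i}\left(x\right),\varphi_{i}\left(y\right)\right\rangle _{\mathscr{H}_{i}}=K_{i}\left(x,y\right)$ for $i=1,2$, the right-hand side becomes $K_{1}(x,y)K_{2}(x,y)$, which equals $K(x,y)$ by the definition \eqref{eq:c2} of the Hadamard product. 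This is precisely the assertion $(\varphi,\mathscr{H})\in H_{S}(K)$.

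There is essentially no obstacle in this argument; its entire content is the appeal to the tensor inner-product formula, and the verification is a one-line substitution. It is worth noting that this construction also furnishes a second, more conceptual proof of \prettyref{prop:b4}: exhibiting an explicit feature space for the product kernel $K=K_{1}K_{2}$ shows immediately that $K$ is p.d., bypassing the direct manipulation of the double sums used there.
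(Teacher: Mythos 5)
Your proof is correct and is essentially the paper's own argument: both rest solely on the factorization $\left\langle u_{1}\otimes u_{2},v_{1}\otimes v_{2}\right\rangle _{\mathscr{H}_{1}\otimes\mathscr{H}_{2}}=\left\langle u_{1},v_{1}\right\rangle _{\mathscr{H}_{1}}\left\langle u_{2},v_{2}\right\rangle _{\mathscr{H}_{2}}$ combined with the defining property of the feature maps $\varphi_{i}$, written as the same three-step chain of equalities. The closing remark that this yields an alternative proof of \prettyref{prop:b4} is a valid observation but does not change the substance.
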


\begin{proof}
We have
\begin{eqnarray*}
K\left(x,y\right) & \underset{\left(\ref{eq:c2}\right)}{=} & K_{1}\left(x,y\right)K_{2}\left(x,y\right)\\
 & \underset{\left(\ref{eq:c3}\right)}{=} & \left\langle \varphi_{1}\left(x\right),\varphi_{1}\left(y\right)\right\rangle _{\mathscr{H}_{1}}\left\langle \varphi_{2}\left(x\right),\varphi_{2}\left(y\right)\right\rangle _{\mathscr{H}_{2}}\\
 & \underset{\left(\ref{eq:c4}\right)}{=} & \left\langle \varphi\left(x\right),\varphi\left(y\right)\right\rangle _{\mathscr{H}_{1}\otimes\mathscr{H}_{2}}.
\end{eqnarray*}
\end{proof}
\begin{prop}
Let $I$ be an index set. Suppose $\left(\varphi_{i},\mathscr{H}_{i}\right)\in H_{S}\left(K_{i}\right)$,
$i\in I$, and $\sum_{i\in I}K_{i}\left(x,x\right)<\infty$ for all
$x\in X$. Let $K\coloneqq\sum_{i\in I}K_{i}$, $\mathscr{H}\coloneqq\oplus_{i\in I}\mathscr{H}_{i}$,
and $\varphi\left(x\right)\coloneqq\oplus_{i\in I}\varphi_{i}\left(x\right)$.
Then, $\left(\varphi,\mathscr{H}\right)\in H_{S}\left(K\right)$.
\end{prop}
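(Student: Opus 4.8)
The plan is to verify three things in sequence: first, that the candidate map $\varphi\left(x\right)=\oplus_{i\in I}\varphi_{i}\left(x\right)$ genuinely defines an element of the Hilbert space direct sum $\mathscr{H}=\oplus_{i\in I}\mathscr{H}_{i}$; second, that the series $\sum_{i\in I}K_{i}$ defining $K$ converges, so that $K$ is a well-defined (and automatically p.d.) kernel; and third, that the inner product in $\mathscr{H}$ reproduces $K$. The whole argument is a short computation once the summability issues are settled.

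First I would use the hypothesis $\left(\varphi_{i},\mathscr{H}_{i}\right)\in H_{S}\left(K_{i}\right)$ with $x=y$ to record the diagonal identity $\left\Vert \varphi_{i}\left(x\right)\right\Vert _{\mathscr{H}_{i}}^{2}=\left\langle \varphi_{i}\left(x\right),\varphi_{i}\left(x\right)\right\rangle _{\mathscr{H}_{i}}=K_{i}\left(x,x\right)$. Summing over $i$ and invoking the standing assumption $\sum_{i\in I}K_{i}\left(x,x\right)<\infty$ gives $\sum_{i\in I}\left\Vert \varphi_{i}\left(x\right)\right\Vert _{\mathscr{H}_{i}}^{2}=\sum_{i\in I}K_{i}\left(x,x\right)<\infty$ for every $x\in X$. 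This is exactly the condition for $\oplus_{i\in I}\varphi_{i}\left(x\right)$ to lie in $\oplus_{i\in I}\mathscr{H}_{i}$, so $\varphi\left(x\right)\in\mathscr{H}$ is well defined. This is the step that genuinely consumes the summability hypothesis.

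Next, to see that $K=\sum_{i\in I}K_{i}$ makes sense pointwise, I would estimate each term by Cauchy--Schwarz inside $\mathscr{H}_{i}$, namely $\left|K_{i}\left(x,y\right)\right|=\left|\left\langle \varphi_{i}\left(x\right),\varphi_{i}\left(y\right)\right\rangle _{\mathscr{H}_{i}}\right|\leq\sqrt{K_{i}\left(x,x\right)}\sqrt{K_{i}\left(y,y\right)}$, and then apply Cauchy--Schwarz over the index set to obtain
\[
\sum_{i\in I}\left|K_{i}\left(x,y\right)\right|\leq\Big(\sum_{i\in I}K_{i}\left(x,x\right)\Big)^{1/2}\Big(\sum_{i\in I}K_{i}\left(y,y\right)\Big)^{1/2}<\infty.
\]
Thus the defining series for $K$ converges absolutely for all $x,y\in X$.

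Finally, I would compute the inner product in the direct sum, which by the definition of $\oplus_{i\in I}\mathscr{H}_{i}$ is the (absolutely convergent) sum of the component inner products: $\left\langle \varphi\left(x\right),\varphi\left(y\right)\right\rangle _{\mathscr{H}}=\sum_{i\in I}\left\langle \varphi_{i}\left(x\right),\varphi_{i}\left(y\right)\right\rangle _{\mathscr{H}_{i}}=\sum_{i\in I}K_{i}\left(x,y\right)=K\left(x,y\right)$, which yields $\left(\varphi,\mathscr{H}\right)\in H_{S}\left(K\right)$. The only non-formal point, and hence the main obstacle, is the convergence and well-definedness in the first two steps; these rest entirely on the finiteness assumption $\sum_{i\in I}K_{i}\left(x,x\right)<\infty$ together with the Cauchy--Schwarz estimates above. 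The interchange of the summation with the direct-sum inner product is justified by the very definition of that inner product, so no additional limiting argument is needed.
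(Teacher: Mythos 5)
Your proposal is correct and follows essentially the same route as the paper's proof: the paper also reduces everything to the identity $\left\langle \varphi\left(x\right),\varphi\left(y\right)\right\rangle _{\mathscr{H}}=\sum_{i\in I}\left\langle \varphi_{i}\left(x\right),\varphi_{i}\left(y\right)\right\rangle _{\mathscr{H}_{i}}=K\left(x,y\right)$, with the summability hypothesis guaranteeing well-definedness. You merely make explicit the membership $\varphi\left(x\right)\in\oplus_{i\in I}\mathscr{H}_{i}$ and the absolute convergence via Cauchy--Schwarz, which the paper leaves implicit.
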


\begin{proof}
Note $K$ is well defined if and only if $\sum_{i\in I}K_{i}\left(x,x\right)<\infty$,
for all $x\in X$. By assumptions, 
\[
\left\langle \varphi\left(x\right),\varphi\left(y\right)\right\rangle _{\mathscr{H}}=\sum_{i\in I}\left\langle \varphi_{i}\left(x\right),\varphi_{i}\left(y\right)\right\rangle _{\mathscr{H}_{i}}=\sum_{i\in I}K_{i}\left(x,y\right)=K\left(x,y\right).
\]
\end{proof}
\begin{lem}[sums of p.d. kernels]
 Let $K_{i}$, $i=1,2$, be p.d. kernels on $X\times X$. Then it
is immediate that $K=K_{1}+K_{2}$ is also p.d. Indeed with \prettyref{def:C1},
we have that for every $\left(\varphi_{i},\mathscr{L}_{i}\right)\in H_{S}\left(K_{i}\right)$,
then 
\[
\left(\varphi^{\oplus}\left(x\right)=\varphi_{1}\left(x\right)+\varphi_{2}\left(x\right),\mathscr{L}_{1}\oplus\mathscr{L}_{2}\right)\in H_{S}\left(K\right).
\]
\end{lem}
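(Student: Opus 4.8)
The plan is to verify directly that the candidate pair reproduces $K$, from which both assertions follow at once. First I would pin down the meaning of the symbol $\varphi^{\oplus}\left(x\right)=\varphi_{1}\left(x\right)+\varphi_{2}\left(x\right)$: since $\varphi_{1}\left(x\right)\in\mathscr{L}_{1}$ and $\varphi_{2}\left(x\right)\in\mathscr{L}_{2}$ live a priori in different Hilbert spaces, the sum must be read inside the direct sum $\mathscr{L}_{1}\oplus\mathscr{L}_{2}$, in which $\mathscr{L}_{1}$ and $\mathscr{L}_{2}$ are embedded as mutually orthogonal closed subspaces; concretely, $\varphi^{\oplus}\left(x\right)$ is the element with first component $\varphi_{1}\left(x\right)$ and second component $\varphi_{2}\left(x\right)$. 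Recall that the inner product on $\mathscr{L}_{1}\oplus\mathscr{L}_{2}$ is, by definition, the sum of the two component inner products.

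Next I would carry out the single computation the lemma requires. Using the orthogonality of the two summands, so that the cross terms $\left\langle \varphi_{1}\left(x\right),\varphi_{2}\left(y\right)\right\rangle$ and $\left\langle \varphi_{2}\left(x\right),\varphi_{1}\left(y\right)\right\rangle$ do not contribute, for all $x,y\in X$ one gets
\[
\left\langle \varphi^{\oplus}\left(x\right),\varphi^{\oplus}\left(y\right)\right\rangle _{\mathscr{L}_{1}\oplus\mathscr{L}_{2}}=\left\langle \varphi_{1}\left(x\right),\varphi_{1}\left(y\right)\right\rangle _{\mathscr{L}_{1}}+\left\langle \varphi_{2}\left(x\right),\varphi_{2}\left(y\right)\right\rangle _{\mathscr{L}_{2}}.
\]
By the hypothesis $\left(\varphi_{i},\mathscr{L}_{i}\right)\in H_{S}\left(K_{i}\right)$, the right-hand side equals $K_{1}\left(x,y\right)+K_{2}\left(x,y\right)=K\left(x,y\right)$. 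This is exactly the defining condition of \prettyref{def:C1}, hence $\left(\varphi^{\oplus},\mathscr{L}_{1}\oplus\mathscr{L}_{2}\right)\in H_{S}\left(K\right)$.

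Finally, the positive definiteness of $K$ is then automatic: any kernel admitting a feature map as in \prettyref{def:C1} is p.d., since for finite $\left(c_{i}\right)$, $\left(x_{i}\right)$ one has $\sum_{i,j}\overline{c_{i}}c_{j}K\left(x_{i},x_{j}\right)=\left\Vert \sum_{i}c_{i}\varphi^{\oplus}\left(x_{i}\right)\right\Vert _{\mathscr{L}_{1}\oplus\mathscr{L}_{2}}^{2}\geq0$; alternatively, this is seen term-by-term from $K=K_{1}+K_{2}$ and \prettyref{def:b1} directly. There is no substantive obstacle here. The only point demanding care is the notational convention for $\varphi^{\oplus}$, namely that the ``sum'' is taken in the \emph{orthogonal} direct sum, so that precisely the cross terms drop out; this orthogonality is the entire content of the argument.
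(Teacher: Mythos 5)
Your proof is correct and follows essentially the same route as the paper's: the single computation $\left\langle \varphi^{\oplus}\left(x\right),\varphi^{\oplus}\left(y\right)\right\rangle _{\mathscr{L}_{1}\oplus\mathscr{L}_{2}}=\left\langle \varphi_{1}\left(x\right),\varphi_{1}\left(y\right)\right\rangle _{\mathscr{L}_{1}}+\left\langle \varphi_{2}\left(x\right),\varphi_{2}\left(y\right)\right\rangle _{\mathscr{L}_{2}}=K_{1}\left(x,y\right)+K_{2}\left(x,y\right)$ is exactly the paper's argument. Your added remarks --- that the ``sum'' must be read as the element of the orthogonal direct sum so the cross terms vanish, and that positive definiteness of $K$ is automatic from the existence of a feature map --- are correct clarifications the paper leaves implicit, but they do not change the approach.
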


\begin{proof}
~
\begin{align*}
\left\langle \varphi^{\oplus}\left(x\right),\varphi^{\oplus}\left(y\right)\right\rangle _{\mathscr{L}_{1}\oplus\mathscr{L}_{2}} & =\left\langle \varphi_{1}\left(x\right),\varphi_{1}\left(y\right)\right\rangle _{\mathscr{L}_{1}}+\left\langle \varphi_{2}\left(x\right),\varphi_{2}\left(y\right)\right\rangle _{\mathscr{L}_{2}}\\
 & =K_{1}\left(x,y\right)+K_{2}\left(x,y\right)=K\left(x,y\right).
\end{align*}
 
\end{proof}
However, the RKHS from $K=K_{1}+K_{2}$ is not a direct sum Hilbert
space. By \cite{MR51437}, $F\in\mathscr{H}_{K}$ has its norm-represented
as
\[
\left\Vert F\right\Vert _{\mathscr{H}_{K}}^{2}=\inf_{F_{1},F_{2}}\left\{ \left\Vert F_{1}\right\Vert _{\mathscr{H}_{K_{1}}}^{2}+\left\Vert F_{1}\right\Vert _{\mathscr{H}_{K_{2}}}^{2},F_{1}+F_{2}=F,F_{i}\in\mathscr{H}_{K_{i}}\right\} .
\]
Indeed, the RKHS $\mathscr{H}_{K}$ takes the form 
\[
\mathscr{H}_{K}\simeq\left(\mathscr{H}_{K_{1}}\oplus\mathscr{H}_{K_{2}}\right)\ominus N
\]
where 
\[
N=\left\{ \left(f,-f\right)\in\mathscr{H}_{K_{1}}\oplus\mathscr{H}_{K_{2}},f\in\mathscr{H}_{K_{1}}\cap\mathscr{H}_{K_{2}}\right\} .
\]

Of special significance to the above discussion are the following
citations \cite{MR4689655, MR4751246, MR3454256, MR2597189, MR2810909, MR3024752, MR3875593}.

\section{\label{sec:4}Hilbert space of distributions}

Above, in Sections \ref{sec:2} and \ref{sec:3}, we introduced realizations
via feature maps. Here we address the of \textquotedblleft good\textquotedblright{}
choices of feature spaces, in particular, we make precise the choices
of \textquotedblleft bigger\textquotedblright{} features spaces, taking
the form of Hilbert spaces of Schwartz distributions. 

The details below concern special families of p.d. kernels $K$, and
ways to make precise the corresponding feature spaces, including the
form taken by the RKHS $\mathscr{H}_{K}$. This is motivated in part
by an important paper by Laurent Schwartz \cite{MR179587}.
\begin{thm}
\label{thm:d1}Let $K$ be a p.d. kernel on $\Omega\times\Omega$,
where $\Omega\subset\mathbb{R}^{d}$ is open, and let $\mathscr{H}_{K}$
be the corresponding RKHS. Suppose $K\in C^{\infty}\left(\Omega\times\Omega\right)$,
and let $\left\{ f_{n}\right\} $ be an ONB for $\mathscr{H}_{K}$,
so that $K\left(x,y\right)=\sum_{n}\overline{f_{n}\left(x\right)}f_{n}\left(y\right)$,
where $f_{n}\in C^{\infty}$.

Let $\mathscr{E}'\left(\Omega\right)$ denote the space of Schwartz
distributions with compact support in $\Omega$, i.e., $\mathscr{E}'\left(\Omega\right)$
is the Frechet dual of $C^{\infty}\left(\Omega\right)$. Let 
\begin{equation}
H_{K}^{dist}\left(\Omega\right)=\left\{ D\in\mathscr{E}'\left(\Omega\right):DKD<\infty\right\} \label{eq:d41}
\end{equation}
with an ONB $\left\{ D_{n}\right\} $. The notation $DKD$ in \eqref{eq:d41}
refers to a pair $K,D$ where $K$ is a $C^{\infty}$ p.d. kernel
and $D$ is a Schwartz distribution. Then $DKD$ refers to the action
of $D$ in the two variables of $K$, i.e., on the left and on the
right; see the cited literature. Here, the Hilbert completion is with
respect to the inner product $\left(\xi,\eta\right)\mapsto\xi K\eta$. 

Set 
\[
\varphi\left(x\right)=\sum_{n}f_{n}\left(x\right)D_{n}\in H_{K}^{dist}\left(\Omega\right),
\]
then 
\[
\left(\varphi,H_{K}^{dist}\left(\Omega\right)\right)\in H_{S}\left(K\right),
\]
i.e., 
\[
K\left(x,y\right)=\left\langle \varphi\left(x\right),\varphi\left(y\right)\right\rangle _{H_{K}^{dist}\left(\Omega\right)}.
\]
\end{thm}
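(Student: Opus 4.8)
The plan is to reduce the statement to \prettyref{prop:c3}, for which it suffices to establish two facts: that $H_{K}^{dist}\left(\Omega\right)$ is a genuine Hilbert space, and that $\dim H_{K}^{dist}\left(\Omega\right)=\dim\mathscr{H}_{K}$. Once these are in place, the map $\varphi\left(x\right)=\sum_{n}f_{n}\left(x\right)D_{n}$ is exactly the feature map delivered by \prettyref{prop:c3} with the choices $\mathscr{L}=H_{K}^{dist}\left(\Omega\right)$ and $\zeta_{n}=D_{n}$, and the asserted identity $K\left(x,y\right)=\left\langle \varphi\left(x\right),\varphi\left(y\right)\right\rangle $ follows at once from orthonormality of $\left\{ D_{n}\right\} $, just as in the proofs of \prettyref{prop:c3} and \prettyref{prop:3-5}.

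The key computation, which I would carry out first, is to unwind the bilinear form $DKD$ through the ONB expansion $K\left(x,y\right)=\sum_{n}\overline{f_{n}\left(x\right)}f_{n}\left(y\right)$. Since each $f_{n}\in C^{\infty}\left(\Omega\right)$ and $D\in\mathscr{E}'\left(\Omega\right)$ has compact support, acting with $D$ in the second variable and with $\overline{D}$ in the first yields, termwise,
\[
DKD=\sum_{n}\overline{D\left(f_{n}\right)}\,D\left(f_{n}\right)=\sum_{n}\left|D\left(f_{n}\right)\right|^{2}\geq0.
\]
This single identity does most of the work: it shows that $\left(\xi,\eta\right)\mapsto\xi K\eta$ is positive semidefinite, hence defines a Hilbert space inner product after quotienting by its null space and completing, and it shows that the defining condition $DKD<\infty$ is precisely $\left(D\left(f_{n}\right)\right)_{n}\in l^{2}$. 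The interchange of $D$ with the infinite sum is the one analytic point requiring care; I would justify it from convergence of $K\left(x,\cdot\right)=\sum_{n}\overline{f_{n}\left(x\right)}f_{n}$ in $C^{\infty}$ on compact subsets of $\Omega$ together with the continuity of $D$ on $C^{\infty}\left(\Omega\right)$, which is exactly where the hypothesis $K\in C^{\infty}\left(\Omega\times\Omega\right)$ is used.

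Next I would pin down the dimension. By the displayed identity the map $\iota:D\mapsto\left(D\left(f_{n}\right)\right)_{n}$ is an isometry of $H_{K}^{dist}\left(\Omega\right)$ into $l^{2}$, and I would show it is onto. For $D=\delta_{x}$ one has $\iota\left(\delta_{x}\right)=\left(f_{n}\left(x\right)\right)_{n}$ with $\left\Vert \delta_{x}\right\Vert ^{2}=\sum_{n}\left|f_{n}\left(x\right)\right|^{2}=K\left(x,x\right)<\infty$ by \prettyref{lem:b3}, so every Dirac mass lies in $H_{K}^{dist}\left(\Omega\right)$. If $\left(c_{n}\right)\in l^{2}$ were orthogonal to every $\iota\left(\delta_{x}\right)$, then $\sum_{n}\overline{c_{n}}f_{n}\left(x\right)=0$ for all $x$, i.e., the function $g=\sum_{n}\overline{c_{n}}f_{n}\in\mathscr{H}_{K}$ vanishes identically, forcing $c_{n}=0$; hence $\left\{ \iota\left(\delta_{x}\right):x\in\Omega\right\} $ has dense span in $l^{2}$. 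Since the isometric image of the complete space $H_{K}^{dist}\left(\Omega\right)$ is closed in $l^{2}$ and contains a dense subspace, $\iota$ is surjective, giving $H_{K}^{dist}\left(\Omega\right)\cong l^{2}\cong\mathscr{H}_{K}$ and in particular matching dimensions, so that the ONBs $\left\{ D_{n}\right\} $ and $\left\{ f_{n}\right\} $ may be indexed over a common set.

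With these two facts the hypotheses of \prettyref{prop:c3} are met, and the theorem follows; I would then close by noting well-definedness of $\varphi\left(x\right)=\sum_{n}f_{n}\left(x\right)D_{n}$ (norm-convergent since $\left(f_{n}\left(x\right)\right)\in l^{2}$ and $\left\{ D_{n}\right\} $ is orthonormal) and the final computation $\left\langle \varphi\left(x\right),\varphi\left(y\right)\right\rangle =\sum_{n}\overline{f_{n}\left(x\right)}f_{n}\left(y\right)=K\left(x,y\right)$. I expect the main obstacle to be not this final inner-product calculation, which is routine and mirrors the earlier feature-map results, but the rigorous interpretation of the two-variable action $DKD$ and the verification that it defines a positive, complete inner product; the smoothness of $K$ and the compact support of elements of $\mathscr{E}'\left(\Omega\right)$ are precisely the ingredients that render the termwise manipulation legitimate.
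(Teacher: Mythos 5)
Your proposal is correct and takes essentially the same route as the paper, whose entire proof of this theorem is the single citation ``See \prettyref{prop:c3}.'' You additionally supply the verifications the paper leaves implicit --- the termwise identity $DKD=\sum_{n}\left|D\left(f_{n}\right)\right|^{2}$ giving positivity and completeness of the form $\left(\xi,\eta\right)\mapsto\xi K\eta$, and the dimension count via the isometry $D\mapsto\left(D\left(f_{n}\right)\right)_{n}$ onto $l^{2}$ --- which is exactly what is needed to make the appeal to \prettyref{prop:c3} legitimate.
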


\begin{proof}
See \prettyref{prop:c3}. 
\end{proof}
\begin{example}[\cite{MR1895530}]
\label{exa:d2}Let $K\left(x,y\right)=\left(1-xy\right)^{-1}$, defined
on $J\times J$, where $J=\left(-1,1\right)$. The corresponding RKHS
$\mathscr{H}_{K}$ has an ONB $\left\{ x^{n}:n\in\mathbb{N}_{0}\right\} $.

Let $D_{n}=\left(n!\right)^{-1}\delta_{0}^{\left(n\right)}$, where
$\delta_{0}^{\left(n\right)}$ is the $n^{th}$ derivative of the
Dirac distribution $\delta_{0}$. Note that $K\in C^{\infty}$, and
\[
D_{n}\left(x\right)K\left(x,y\right)=y^{n}\in\mathscr{H}_{K}.
\]
Define
\begin{equation}
D_{n}KD_{m}\coloneqq D_{n}\left(x\right)K\left(x,y\right)D_{m}\left(y\right)=\begin{cases}
1 & n=m,\\
0 & n\neq m.
\end{cases}
\end{equation}
Let $\mathscr{H}_{K}^{dist}$ be the Hilbert completion of $span\left\{ D_{n}\right\} $,
then 
\begin{equation}
\mathscr{H}_{K}^{dist}=\left\{ \xi\in\mathscr{E}'\left(J\right):\xi K\xi<\infty\right\} =\left\{ \sum c_{n}D_{n}:\sum\left|c_{n}\right|^{2}<\infty\right\} ,\label{eq:d2}
\end{equation}
for which $\left\{ D_{n}:n\in\mathbb{N}_{0}\right\} $ is an ONB.
 Set 
\[
\varphi\left(x\right)=\sum_{n=0}^{\infty}x^{n}D_{n}\in\mathscr{H}_{K}^{dist},
\]
then $\left(\varphi,\mathscr{H}_{K}^{dist}\right)\in H_{S}\left(K\right)$,
by \prettyref{thm:d1}. Note that
\[
\mathscr{H}_{K}^{dist}\cap\mathscr{H}_{K}=\emptyset.
\]

Similarly, for any kernel that is defined by power series, we obtain
$\mathscr{H}_{K}^{dist}$ as in \eqref{eq:d2}, by adjusting the coefficients
of $\delta^{\left(n\right)}$. In particular, this applies to the
kernel
\[
K\left(x,y\right)=\left(1-xy\right)^{-n}=1+nxy+\frac{1}{2}n\left(n+1\right)x^{2}y^{2}+\cdots,\quad\left(x,y\right)\in J\times J.
\]
\end{example}

\begin{cor}
\label{cor:d3}Suppose $f\left(x\right)=\sum_{n=0}^{\infty}a_{n}x^{n}$,
$a_{n}>0$, with radius of convergence $r^{2}>0$. Let $K\left(x,y\right)=\sum_{n=0}^{\infty}a_{n}x^{n}y^{n}$,
defined on $J\times J$, where $J=\left(-r,r\right)$. Set 
\[
H_{K}^{dist}=\left\{ \sum_{n=0}^{\infty}c_{n}D_{n}:\sum\left|c_{n}\right|^{2}<\infty,\;D_{n}=\frac{1}{n!\sqrt{a_{n}}}\delta^{\left(n\right)}\right\} 
\]
and 
\[
\varphi\left(x\right)=\sum_{n=0}^{\infty}\sqrt{a_{n}}x^{n}D_{n}.
\]
Then $\left(\varphi,H_{K}^{dist}\right)\in H_{S}\left(K\right)$. 
\end{cor}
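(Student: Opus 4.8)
The plan is to realize this as a direct instance of \prettyref{thm:d1}, with the ONB $\left\{ f_{n}\right\}$ of $\mathscr{H}_{K}$ taken to be the monomials $f_{n}\left(x\right)=\sqrt{a_{n}}\,x^{n}$, and the dual ONB $\left\{ D_{n}\right\}$ of distributions chosen so that the normalizing constants $\left(n!\sqrt{a_{n}}\right)^{-1}$ exactly cancel the coefficients $a_{n}$. First I would record the elementary analytic facts. Since $f$ has radius of convergence $r^{2}$, for any $x,y\in J=\left(-r,r\right)$ we have $\left|xy\right|<r^{2}$, so $K\left(x,y\right)=\sum_{n}a_{n}x^{n}y^{n}=f\left(xy\right)$ converges; in particular $K\left(x,x\right)=\sum_{n}a_{n}x^{2n}<\infty$. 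Writing $K\left(x,y\right)=\sum_{n}\overline{f_{n}\left(x\right)}f_{n}\left(y\right)$ with $f_{n}\left(x\right)=\sqrt{a_{n}}\,x^{n}$ (real-valued, as $a_{n}>0$), \prettyref{lem:b3} shows $\left\{ f_{n}\right\}$ is a Parseval frame for $\mathscr{H}_{K}$, and since the $f_{n}$ are distinct it is in fact an ONB.

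The heart of the argument is the distributional computation that $\left\{ D_{n}\right\}$ is orthonormal for the pairing $\left(\xi,\eta\right)\mapsto\xi K\eta$, exactly as in \prettyref{exa:d2}. Using that $\delta_{0}^{\left(n\right)}$ extracts the $n$-th Taylor coefficient, i.e.\ its action on $x\mapsto x^{m}$ produces $n!$ when $m=n$ and $0$ otherwise, I would compute the one-sided action
\[
D_{n}\left(x\right)K\left(x,y\right)=\frac{1}{n!\sqrt{a_{n}}}\sum_{m}a_{m}\left(\delta_{0}^{\left(n\right)}x^{m}\right)y^{m}=\sqrt{a_{n}}\,y^{n}=f_{n}\left(y\right)\in\mathscr{H}_{K},
\]
and then the two-sided pairing
\[
D_{n}KD_{m}=D_{m}\left(y\right)\left[\sqrt{a_{n}}\,y^{n}\right]=\frac{\sqrt{a_{n}}}{\sqrt{a_{m}}}\,\delta_{n,m}=\delta_{n,m}.
\]
This is precisely where the normalization $D_{n}=\left(n!\sqrt{a_{n}}\right)^{-1}\delta^{\left(n\right)}$ is used: it is tuned so that the factor $a_{n}$ coming from $K$ and the two factors $n!$ coming from the derivatives are absorbed, leaving the Kronecker delta. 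Consequently $\left\{ D_{n}\right\}$ is an orthonormal system whose closed span is $H_{K}^{dist}$ by definition, so it is an ONB for $H_{K}^{dist}$.

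With both ONBs in hand, the conclusion follows immediately from \prettyref{thm:d1} (equivalently \prettyref{prop:c3}). The map $\varphi\left(x\right)=\sum_{n}f_{n}\left(x\right)D_{n}=\sum_{n}\sqrt{a_{n}}\,x^{n}D_{n}$ lies in $H_{K}^{dist}$ because $\sum_{n}\left|f_{n}\left(x\right)\right|^{2}=K\left(x,x\right)<\infty$, and since $\left\{ D_{n}\right\}$ is orthonormal,
\[
\left\langle \varphi\left(x\right),\varphi\left(y\right)\right\rangle _{H_{K}^{dist}}=\sum_{n}\overline{f_{n}\left(x\right)}f_{n}\left(y\right)=\sum_{n}a_{n}x^{n}y^{n}=K\left(x,y\right),
\]
which gives $\left(\varphi,H_{K}^{dist}\right)\in H_{S}\left(K\right)$.

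I expect the only genuine obstacle to be the bookkeeping with the pairing $DKD$: one must check that $\delta_{0}^{\left(n\right)}$ may be applied term-by-term to the locally convergent series $K\left(\cdot,y\right)$, and that the intermediate object $\sqrt{a_{n}}\,y^{n}$ genuinely lies in $\mathscr{H}_{K}$ so that the second application of $D_{m}$ is legitimate. Both are granted because $K\in C^{\infty}\left(J\times J\right)$, with the series converging locally uniformly together with all of its derivatives. The sign ambiguity in the convention for $\delta_{0}^{\left(n\right)}$ is harmless, since any $\left(-1\right)^{n}$ from the left action is matched by a $\left(-1\right)^{m}$ from the right action, and these cancel on the diagonal $n=m$ where alone the pairing is nonzero. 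Everything then reduces to the single orthonormality identity above, making the corollary a formal consequence of the theorem.
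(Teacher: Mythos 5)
Your proposal is correct and follows essentially the same route as the paper, which states the corollary without a separate proof and relies on the remark after Example~\ref{exa:d2} (``adjusting the coefficients of $\delta^{(n)}$'') together with Theorem~\ref{thm:d1}/Proposition~\ref{prop:c3}. Your computation $D_nKD_m=\delta_{n,m}$ with the normalization $D_n=(n!\sqrt{a_n})^{-1}\delta^{(n)}$, and the identification of $\{\sqrt{a_n}\,x^n\}$ as an ONB via Lemma~\ref{lem:b3}, is exactly the intended argument, just written out in full.
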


Of special significance to the above discussion are the following
citations \cite{MR4302453, MR4405120, MR1895530, MR179587}.

\section{\label{sec:E}Ordering of kernels and RKHSs}

Since the choice of \textquotedblleft good\textquotedblright{} features
for kernel-machines depend on prior identification of kernels, it
is clear that precise comparisons of kernels will be important. We
stress ordering of kernels. Their role is addressed below, addressing
the role played for feature selection by ordering between pairs kernels,
and their implications for computations. Details will be addressed
below.

The role of \textquotedblleft ordering\textquotedblright , as we saw,
arises for both issues dealing with feature selection, and from the
role kernels play in geometry and in analysis. More generally, the
question of order plays an important role in diverse methods used
for building new reproducing kernel Hilbert spaces (RKHSs) from other
Hilbert spaces with specified frame elements having specific properties.
Such new constructions of RKHSs are used in turn within the framework
of regularization theory, and in approximation theory; involving there
such questions as semiparametric estimation, and multiscale schemes
of regularization. Making use of the results from the previous two
section, we turn below to a systematic analysis of these questions
of ordering.

Returning to the general framework $(X,K)$, where the set $X$ does
not come with any particular structure, we now study the case when
$X$ is fixed, and we examine the collection of all p.d. kernels defined
on $X\times X$. A special feature of interest is that of deciding
how the ordering of pairs of p.d. kernels relates to operators which
map between the associated families of feature spaces, see \prettyref{thm:f6}.
This study includes an identification of multipliers, see \prettyref{cor:f5}.

We first recall Aronszajn\textquoteright s inclusion theorem, which
states that, for two p.d. kernels $K,L$ on $X\times X$, $K\leq L$
if and only if $\mathscr{H}_{K}$ is contractively contained in $\mathscr{H}_{L}$
(see e.g., \cite{MR51437}):
\begin{thm}
Let $K_{i}$, $i=1,2$, be p.d. kernels on $X\times X$. Then $\mathscr{H}_{K_{1}}\subset\mathscr{H}_{K_{2}}$
(bounded contained) if and only if there exists a constant $c>0$
such that $K_{1}\leq c^{2}K_{2}$. Moreover, $\left\Vert f\right\Vert _{\mathscr{H}_{2}}\leq c\left\Vert f\right\Vert _{\mathscr{H}_{1}}$
for all $f\in\mathscr{H}_{K_{1}}$. 
\end{thm}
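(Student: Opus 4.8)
This is Aronszajn's inclusion (domination) theorem, and the plan is to prove both implications through a single bounded operator that bridges the two RKHSs. Throughout I read ``bounded contained'' as: $\mathscr{H}_{K_{1}}\subseteq\mathscr{H}_{K_{2}}$ as spaces of functions, and the inclusion map is bounded. The bridge is the operator $T:\mathscr{H}_{K_{2}}\rightarrow\mathscr{H}_{K_{1}}$ determined on kernel sections by $TK_{2,x}=K_{1,x}$, whose adjoint will turn out to be exactly the inclusion $\mathscr{H}_{K_{1}}\hookrightarrow\mathscr{H}_{K_{2}}$. I will carry the constant all the way through, so that the one $c$ appearing in $K_{1}\leq c^{2}K_{2}$ is the same $c$ bounding the inclusion, which yields the ``moreover'' clause. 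I use the paper's convention $f(x)=\langle K_{x},f\rangle_{\mathscr{H}_{K}}$.

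For the direction $K_{1}\leq c^{2}K_{2}\Rightarrow$ bounded containment, I would first define $T$ on the dense subspace $\mathrm{span}\{K_{2,x}:x\in X\}\subseteq\mathscr{H}_{K_{2}}$ by $T\big(\sum_{i}a_{i}K_{2,x_{i}}\big)=\sum_{i}a_{i}K_{1,x_{i}}$. The hypothesis $K_{1}\leq c^{2}K_{2}$ says precisely that $\sum_{i,j}\overline{a_{i}}a_{j}K_{1}(x_{i},x_{j})\leq c^{2}\sum_{i,j}\overline{a_{i}}a_{j}K_{2}(x_{i},x_{j})$ for every finite system, which reads exactly as $\|T(\sum_{i}a_{i}K_{2,x_{i}})\|_{\mathscr{H}_{K_{1}}}^{2}\leq c^{2}\|\sum_{i}a_{i}K_{2,x_{i}}\|_{\mathscr{H}_{K_{2}}}^{2}$. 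Applied to a null combination this single estimate gives well-definedness, and in general it gives $\|T\|\leq c$, so $T$ extends to all of $\mathscr{H}_{K_{2}}$. I would then identify the adjoint $T^{*}:\mathscr{H}_{K_{1}}\rightarrow\mathscr{H}_{K_{2}}$ by evaluating, for $f\in\mathscr{H}_{K_{1}}$, $(T^{*}f)(y)=\langle K_{2,y},T^{*}f\rangle_{\mathscr{H}_{K_{2}}}=\langle TK_{2,y},f\rangle_{\mathscr{H}_{K_{1}}}=\langle K_{1,y},f\rangle_{\mathscr{H}_{K_{1}}}=f(y)$, so $T^{*}f=f$ as a function. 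Hence every $f\in\mathscr{H}_{K_{1}}$ lies in $\mathscr{H}_{K_{2}}$ with the same pointwise values, and $\|f\|_{\mathscr{H}_{K_{2}}}=\|T^{*}f\|_{\mathscr{H}_{K_{2}}}\leq\|T\|\,\|f\|_{\mathscr{H}_{K_{1}}}\leq c\|f\|_{\mathscr{H}_{K_{1}}}$, which is the contractive containment with constant $c$.

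For the converse I would run the same bridge backwards. Given the bounded inclusion $\iota:\mathscr{H}_{K_{1}}\hookrightarrow\mathscr{H}_{K_{2}}$ with $\|\iota\|\leq c$, form $\iota^{*}:\mathscr{H}_{K_{2}}\rightarrow\mathscr{H}_{K_{1}}$, so $\|\iota^{*}\|=\|\iota\|\leq c$. Using the reproducing property in both spaces, $\langle\iota^{*}K_{2,x},f\rangle_{\mathscr{H}_{K_{1}}}=\langle K_{2,x},\iota f\rangle_{\mathscr{H}_{K_{2}}}=f(x)=\langle K_{1,x},f\rangle_{\mathscr{H}_{K_{1}}}$ for all $f\in\mathscr{H}_{K_{1}}$, whence $\iota^{*}K_{2,x}=K_{1,x}$; that is, $\iota^{*}$ is exactly the operator $T$ above. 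Evaluating the bound $\|\iota^{*}g\|_{\mathscr{H}_{K_{1}}}\leq c\|g\|_{\mathscr{H}_{K_{2}}}$ on $g=\sum_{i}a_{i}K_{2,x_{i}}$ gives $\sum_{i,j}\overline{a_{i}}a_{j}K_{1}(x_{i},x_{j})\leq c^{2}\sum_{i,j}\overline{a_{i}}a_{j}K_{2}(x_{i},x_{j})$ for every finite system, which is precisely $c^{2}K_{2}-K_{1}\geq0$, i.e. $K_{1}\leq c^{2}K_{2}$.

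The routine parts are the density/extension arguments and keeping the Hermitian-symmetry bookkeeping consistent with $f(x)=\langle K_{x},f\rangle$. The one place deserving care -- and the main obstacle I anticipate -- is the logical reading of ``bounded contained.'' In the forward direction I must check that the estimate coming from $K_{1}\leq c^{2}K_{2}$ is genuinely the operator-norm bound (with the correct constant) and not merely boundedness on a spanning set; this is automatic here because the estimate holds uniformly on the dense span. In the converse I should note that if one starts from mere set-theoretic containment $\mathscr{H}_{K_{1}}\subseteq\mathscr{H}_{K_{2}}$, the boundedness of $\iota$ is not an extra assumption: since evaluation $f\mapsto f(x)=\langle K_{x},f\rangle$ is continuous in either RKHS norm, a norm-convergent sequence in $\mathscr{H}_{K_{1}}$ whose images converge in $\mathscr{H}_{K_{2}}$ has matching pointwise limits, so $\iota$ has closed graph and is bounded, and its norm supplies the same $c$.
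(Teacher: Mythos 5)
Your proof is correct, and it is worth noting that the paper itself offers no proof of this statement: it is quoted as Aronszajn's inclusion theorem with a citation to \cite{MR51437}, and the surrounding text only reformulates the normalized case $K\leq L$ in \prettyref{lem:f1}. Your argument is a genuine, self-contained proof, and it proceeds differently from the paper's neighboring material. You build the bridge operator $T:\mathscr{H}_{K_{2}}\rightarrow\mathscr{H}_{K_{1}}$, $TK_{2,x}=K_{1,x}$, read the domination hypothesis as the operator bound $\|T\|\leq c$ on the dense span, and identify $T^{*}$ with the inclusion by a reproducing-property computation; the converse reverses the roles via $\iota^{*}K_{2,x}=K_{1,x}$. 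The paper's \prettyref{lem:f1} instead defines the sesquilinear form $\Phi(L_{x},L_{y})=K(x,y)$ on $\mathscr{H}_{L}$, proves its boundedness by Cauchy--Schwarz in an auxiliary feature space $(\pi,\mathscr{E})\in H_{S}(K)$, and invokes the representation theorem for bounded forms to produce $A$ with $0\leq A\leq I$ and $K(x,y)=\langle A^{1/2}L_{x},A^{1/2}L_{y}\rangle_{\mathscr{H}_{L}}$; the two constructions are linked by $A=jj^{*}=jT$ as the paper's later remark makes explicit. Your route is more elementary (no auxiliary feature space, no quadratic-form representation theorem) and it tracks the constant $c$ cleanly through both directions, which is exactly what the ``moreover'' clause requires; the paper's route buys the extra factorization through $A^{1/2}$ that it needs later in \prettyref{cor:f3} and \prettyref{thm:f6}. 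Your closing observation that set-theoretic containment already forces boundedness of the inclusion, via the closed graph theorem and continuity of point evaluations, is also correct and addresses a point the paper's phrase ``bounded contained'' leaves implicit. Two small bookkeeping remarks: with the paper's convention $f(x)=\langle K_{x},f\rangle$ the inner product is conjugate-linear in the first slot, so the adjoint identities you use should be checked in that convention (they do hold as you wrote them); and the well-definedness of $T$ on null combinations is indeed immediate from the single norm estimate, as you say.
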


This theorem is reformulated in \prettyref{lem:f1} by means of quadratic
forms. It is then extended in \prettyref{thm:f6} to feature spaces. 
\begin{lem}
\label{lem:f1}Suppose $K,L$ are p.d. on $X\times X$, and $K\leq L$.
Let $\mathscr{H}_{L}$ be the RKHS of $L$. 
\begin{enumerate}
\item \label{enu:t1}Let $L_{0}=span\left\{ L_{x}:x\in X\right\} $. Define
$\Phi:L_{0}\times L_{0}\rightarrow\mathbb{C}$ by 
\begin{equation}
\Phi\left(L_{x},L_{y}\right)=K\left(x,y\right)\label{eq:t1}
\end{equation}
and extend by linearity: 
\begin{equation}
\Phi\left(\sum_{i=1}^{m}c_{i}L_{x_{i}},\sum_{j=1}^{n}d_{j}L_{y_{j}}\right)=\sum_{i=1}^{m}\sum_{j=1}^{n}\overline{c_{i}}d_{j}K\left(x_{i},y_{j}\right).
\end{equation}
Then $\Phi$ extends to a bounded sesquilinear form on $\mathscr{H}_{L}$. 
\item \label{enu:t2}There exists a unique positive selfadjoint operator
$A$ on $\mathscr{H}_{L}$, such that $0\leq A\leq I$, and 
\begin{equation}
\Phi\left(f,g\right)=\left\langle A^{1/2}f,A^{1/2}g\right\rangle _{\mathscr{H}_{L}},\quad f,g\in\mathscr{H}_{L}.\label{eq:t3}
\end{equation}
\item Especially, 
\begin{equation}
K\left(x,y\right)=\left\langle A^{1/2}L_{x},A^{1/2}L_{y}\right\rangle _{\mathscr{H}_{L}},\quad x,y\in X.\label{eq:t4}
\end{equation}
\end{enumerate}
\end{lem}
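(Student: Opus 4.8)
The plan is to work first on the dense subspace $L_{0}=span\left\{ L_{x}:x\in X\right\} $, where I can compute everything explicitly, establish a two-sided bound on the quadratic form $\Phi$, and only afterwards promote $\Phi$ to a bounded selfadjoint operator via the standard form-representation argument. The first step is the elementary but decisive one: for $f=\sum_{i=1}^{m}c_{i}L_{x_{i}}\in L_{0}$, the definition \eqref{eq:t1} gives $\Phi\left(f,f\right)=\sum_{i,j}\overline{c_{i}}c_{j}K\left(x_{i},x_{j}\right)$, which is $\geq0$ because $K$ is p.d. On the other hand, by \prettyref{def:b1} one has $\left\Vert f\right\Vert _{\mathscr{H}_{L}}^{2}=\sum_{i,j}\overline{c_{i}}c_{j}L\left(x_{i},x_{j}\right)$, and the hypothesis $K\leq L$ (that is, $L-K$ is p.d., \prettyref{def:order}) yields $\sum_{i,j}\overline{c_{i}}c_{j}\left(L-K\right)\left(x_{i},x_{j}\right)\geq0$. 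Combining these,
\[
0\leq\Phi\left(f,f\right)\leq\left\Vert f\right\Vert _{\mathscr{H}_{L}}^{2},\qquad f\in L_{0},
\]
and this double estimate is the engine for everything that follows.

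The crucial, and really only delicate, point is well-definedness of $\Phi$ on $\mathscr{H}_{L}$. A single vector of $L_{0}$ may admit several representations $\sum c_{i}L_{x_{i}}$, since the $L_{x}$ need not be linearly independent, so I must check that $\Phi\left(f,g\right)$ depends only on $f,g$ as elements of $\mathscr{H}_{L}$. Because $\Phi$ is positive semidefinite on $L_{0}$ by the lower bound, the Cauchy--Schwarz inequality $\left|\Phi\left(f,g\right)\right|^{2}\leq\Phi\left(f,f\right)\Phi\left(g,g\right)$ holds for the form $\Phi$ (this requires only semidefiniteness, not definiteness). If $f\in L_{0}$ has $\left\Vert f\right\Vert _{\mathscr{H}_{L}}=0$, the upper bound forces $\Phi\left(f,f\right)=0$, whence $\Phi\left(f,g\right)=0$ for all $g\in L_{0}$; applied to differences of two representations of the same vector, this gives well-definedness. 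The same Cauchy--Schwarz step with the upper bound gives $\left|\Phi\left(f,g\right)\right|\leq\left\Vert f\right\Vert _{\mathscr{H}_{L}}\left\Vert g\right\Vert _{\mathscr{H}_{L}}$, so $\Phi$ is bounded on $L_{0}\times L_{0}$ and, since $L_{0}$ is dense in $\mathscr{H}_{L}$, extends uniquely to a bounded sesquilinear form on $\mathscr{H}_{L}$, proving \eqref{enu:t1}. By continuity the extended form still satisfies $0\leq\Phi\left(f,f\right)\leq\left\Vert f\right\Vert _{\mathscr{H}_{L}}^{2}$, and it is Hermitian, $\overline{\Phi\left(f,g\right)}=\Phi\left(g,f\right)$, which on generators is precisely the symmetry $K\left(y,x\right)=\overline{K\left(x,y\right)}$ of the p.d. kernel $K$.

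For \eqref{enu:t2} I would invoke the representation of a bounded sesquilinear form: for each fixed $g$ the antilinear functional $f\mapsto\Phi\left(f,g\right)$ is represented by a vector $Ag$, and $g\mapsto Ag$ is a bounded linear operator with $\Phi\left(f,g\right)=\left\langle f,Ag\right\rangle _{\mathscr{H}_{L}}$; uniqueness of $A$ is immediate since the form determines it. Hermitian symmetry of $\Phi$ makes $A$ selfadjoint, while $0\leq\Phi\left(f,f\right)=\left\langle f,Af\right\rangle \leq\left\langle f,f\right\rangle $ gives $0\leq A\leq I$. Taking the positive square root $A^{1/2}$ (selfadjoint, by the spectral theorem) and using $A=\left(A^{1/2}\right)^{2}$, I rewrite $\Phi\left(f,g\right)=\left\langle f,Ag\right\rangle =\left\langle A^{1/2}f,A^{1/2}g\right\rangle _{\mathscr{H}_{L}}$, which is \eqref{eq:t3}. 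Finally the last assertion is just the specialization $f=L_{x}$, $g=L_{y}$, since $\Phi\left(L_{x},L_{y}\right)=K\left(x,y\right)$.

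I expect the main obstacle to be exactly the well-definedness argument in the second paragraph: the rest is a routine application of the form-representation theorem and the spectral calculus, but one must verify carefully that passing from the formal linear combinations in \eqref{eq:t1} to genuine vectors of $\mathscr{H}_{L}$ is legitimate, and it is precisely the upper bound $\Phi\left(f,f\right)\leq\left\Vert f\right\Vert _{\mathscr{H}_{L}}^{2}$ — available only because $K\leq L$ — that makes this passage valid.
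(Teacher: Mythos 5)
Your proof is correct and follows essentially the same route as the paper's: both reduce the boundedness of $\Phi$ to the Cauchy--Schwarz inequality for the positive semidefinite form attached to $K$ (the paper realizes this concretely via a feature map $\left(\pi,\mathscr{E}\right)\in H_{S}\left(K\right)$, you invoke it abstractly for the form) followed by the comparison $K\leq L$ to pass to the $\mathscr{H}_{L}$-norms, and then conclude with the standard representation of a bounded Hermitian form by a positive contraction and its square root. Your explicit verification that $\Phi$ is well defined on $L_{0}$ --- using the sandwich $0\leq\Phi\left(f,f\right)\leq\left\Vert f\right\Vert _{\mathscr{H}_{L}}^{2}$ to annihilate combinations representing the zero vector --- addresses a point the paper's proof passes over in silence, and is worth keeping.
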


\begin{proof}
Part \eqref{enu:t1}. Need only to show that 
\begin{align}
\left|\Phi\left(\sum_{i=1}^{m}c_{i}L_{x_{i}},\sum_{j=1}^{n}d_{j}L_{y_{i}}\right)\right|^{2} & \leq\left\Vert \sum_{i=1}^{m}c_{i}L_{x_{i}}\right\Vert _{\mathscr{H}_{L}}^{2}\left\Vert \sum_{j=1}^{n}d_{j}L_{y_{i}}\right\Vert _{\mathscr{H}_{L}}^{2}\label{eq:t5}
\end{align}
Assume $\left(\pi,\mathscr{E}\right)\in H_{S}\left(K\right)$, i.e.,
$K\left(x,y\right)=\left\langle \pi\left(x\right),\pi\left(y\right)\right\rangle _{\mathscr{E}}$.
Then,
\[
\left|\sum_{i=1}^{m}\sum_{j=1}^{n}\overline{c_{i}}d_{j}K\left(x_{i},y_{j}\right)\right|^{2}=\left|\left\langle \sum_{i=1}^{m}c_{i}\pi\left(x_{i}\right),\sum_{j=1}^{n}d_{j}\pi\left(y_{j}\right)\right\rangle _{\mathscr{E}}\right|^{2}.
\]
Thus,
\begin{align*}
\text{LHS}_{\left(\ref{eq:t5}\right)} & \leq\left\Vert \sum_{i=1}^{m}c_{i}\pi\left(x_{i}\right)\right\Vert _{\mathscr{E}}^{2}\left\Vert \sum_{j=1}^{n}d_{j}\pi\left(y_{j}\right)\right\Vert _{\mathscr{E}}^{2}\\
 & =\left(\sum_{s,t=1}^{m}\overline{c_{s}}c_{t}\left\langle \pi\left(x_{s}\right),\pi\left(x_{t}\right)\right\rangle _{\mathscr{E}}\right)\left(\sum_{s,t=1}^{n}\overline{d_{s}}d_{t}\left\langle \pi\left(y_{s}\right),\pi\left(y_{t}\right)\right\rangle _{\mathscr{E}}\right)\\
 & =\left(\sum_{s,t=1}^{m}\overline{c_{s}}c_{t}K\left(x_{s},x_{t}\right)\right)\left(\sum_{s,t=1}^{n}\overline{d_{s}}d_{t}K\left(y_{s},y_{t}\right)\right)\\
 & \leq\left(\sum_{s,t=1}^{m}\overline{c_{s}}c_{t}L\left(x_{s},x_{t}\right)\right)\left(\sum_{s,t=1}^{n}\overline{d_{s}}d_{t}L\left(y_{s},y_{t}\right)\right)=\text{RKS}_{\left(\ref{eq:t5}\right)}.
\end{align*}

Part \eqref{enu:t2} follows from the general theory of quadratic
forms. Note that \eqref{eq:t4} follows from \eqref{eq:t3} and \eqref{eq:t1}.
\end{proof}
\begin{cor}
\label{cor:f3}Assume $K,L$ are p.d. on $X\times X$, and $K\leq L$.
Let $\mathscr{H}_{K}$ and $\mathscr{H}_{L}$ be the corresponding
RKHSs. Then 
\[
K_{x}\mapsto A^{1/2}L_{x}
\]
extends to an isometry from $\mathscr{H}_{K}$ into $\mathscr{H}_{L}$. 
\end{cor}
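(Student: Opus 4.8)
The plan is to show that the assignment $K_x \mapsto A^{1/2}L_x$ respects the $\mathscr{H}_K$-inner product on the dense span $H_0 = \operatorname{span}\{K_x : x \in X\}$, and then extend by continuity. The natural candidate computation is to compare $\langle K_x, K_y\rangle_{\mathscr{H}_K}$ with $\langle A^{1/2}L_x, A^{1/2}L_y\rangle_{\mathscr{H}_L}$. By the defining norm in \prettyref{def:b1}, the left-hand side equals $K(x,y)$; by \eqref{eq:t4} of \prettyref{lem:f1}, the right-hand side also equals $K(x,y)$. So on the generators the two inner products agree, which is exactly the isometry condition.

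First I would define the map $V_0 : H_0 \to \mathscr{H}_L$ on the spanning elements by $V_0 K_x = A^{1/2}L_x$ and extend it linearly, writing $V_0\bigl(\sum_i c_i K_{x_i}\bigr) = \sum_i c_i A^{1/2}L_{x_i}$. The immediate task is to verify that $V_0$ is well defined and isometric simultaneously: both follow from the single computation
\[
\Bigl\langle \sum_i c_i A^{1/2}L_{x_i}, \sum_j c_j A^{1/2}L_{x_j}\Bigr\rangle_{\mathscr{H}_L}
= \sum_{i,j}\overline{c_i}c_j \langle A^{1/2}L_{x_i}, A^{1/2}L_{x_j}\rangle_{\mathscr{H}_L}
= \sum_{i,j}\overline{c_i}c_j K(x_i,x_j),
\]
where the last equality is \eqref{eq:t4}. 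The right-hand side is precisely $\bigl\|\sum_i c_i K_{x_i}\bigr\|_{\mathscr{H}_K}^2$ by the definition of the RKHS norm. Hence if $\sum_i c_i K_{x_i} = 0$ in $\mathscr{H}_K$, its norm is zero, forcing $\|\sum_i c_i A^{1/2}L_{x_i}\|_{\mathscr{H}_L} = 0$; this gives well-definedness, and the same identity read for general coefficients gives norm preservation.

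Finally, since $V_0$ is a linear isometry defined on the dense subspace $H_0 \subset \mathscr{H}_K$, it extends uniquely to an isometry $V : \mathscr{H}_K \to \mathscr{H}_L$ by the standard bounded-linear-transformation (B.L.T.) theorem, because isometries are bounded and hence uniformly continuous. This extension is the required map. I do not expect any serious obstacle here: the entire content is packaged into \eqref{eq:t4} from the preceding lemma, and the only point requiring a word of care is that well-definedness and isometry are not separate checks but two readings of the same quadratic identity — one must phrase the argument so that the isometry on $H_0$ is established before (or simultaneously with) invoking density, since $H_0$ carries the RKHS norm rather than a mere algebraic structure.
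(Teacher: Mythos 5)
Your proposal is correct and follows the same route the paper intends: Corollary~\ref{cor:f3} is stated as an immediate consequence of \eqref{eq:t4} in Lemma~\ref{lem:f1}, and your argument simply spells out the standard details (inner products agree on the generators $K_x$, hence the map is well defined and isometric on the dense span, hence extends by continuity). Nothing is missing; your explicit remark that well-definedness and isometry are two readings of the same quadratic identity is exactly the right point of care.
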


\begin{rem}
Let a pair of p.d. kernels satisfy the Loewner order relation (\prettyref{def:order}.)
Note that then the corresponding operator $A$ introduced in \eqref{eq:t4}
and \prettyref{cor:f3} will be bounded. However, \prettyref{exa:f5}
below (see \eqref{eq:F8}) illustrates that, in general, the inverse
$A^{-1}$ will be an unbounded operator. In applications to the theory
of elliptic PDEs, the operator $A$ introduced in \eqref{eq:t4} and
\prettyref{cor:f3} may take the form of a \textquotedblleft Greens
function;\textquotedblright{} see e.g., \cite{MR95341,MR91442}.
\end{rem}

\begin{example}
\label{exa:f5}Let $K\left(z,w\right)=\left(1-\overline{w}z\right)^{-1}$,
$L\left(z,w\right)=\left(1-\overline{w}z\right)^{-2}$, defined on
$\mathbb{D}\times\mathbb{D}$, where 
\begin{align*}
\mathscr{H}_{K} & =H_{2}\left(\mathbb{D}\right)=\left\{ \sum_{n=0}^{\infty}c_{n}z^{n}:\left(c_{n}\right)\in l^{2}\right\} ,\\
\mathscr{H}_{L} & =B_{2}\left(\mathbb{D}\right)=\left\{ \sum_{n=0}^{\infty}c_{n}z^{n}:\left(c_{n}/\sqrt{1+n}\right)\in l^{2}\right\} .
\end{align*}
Define 
\begin{equation}
A\left(z^{n}\right)=\left(1+n\right)^{-1}z^{n}.\label{eq:F6}
\end{equation}
Then 
\begin{equation}
K\left(z,w\right)=\left\langle A^{1/2}L_{z},A^{1/2}L_{w}\right\rangle _{\mathscr{H}_{L}}.\label{eq:F7}
\end{equation}
Moreover, the inverse operator is given by 
\begin{equation}
A^{-1}=1+z\frac{d}{dz}:z^{n}\longmapsto\left(1+n\right)z^{n},\label{eq:F8}
\end{equation}
where $A^{-1}\geq1$. 
\end{example}

\begin{proof}[Proof of \eqref{eq:F7}]
Recall that $L_{w}\left(s\right)=L\left(s,w\right)=\sum_{n\in\mathbb{N}_{0}}\left(1+n\right)\overline{w}^{n}s^{n}$,
and 
\[
1=\left\Vert z^{n}\right\Vert _{\mathscr{H}_{K}}\geq\left\Vert z^{n}\right\Vert _{\mathscr{H}_{L}}=\frac{1}{\sqrt{1+n}},\quad n\in\mathbb{N}_{0}.
\]
Then, 
\begin{align*}
\left\langle A^{1/2}L_{z},A^{1/2}L_{w}\right\rangle _{\mathscr{H}_{L}} & =\left\langle A^{1/2}\sum_{n}\left(1+n\right)\overline{z}^{n}s^{n},A^{1/2}\sum_{m}\left(1+n\right)\overline{w}^{m}s^{m}\right\rangle _{\mathscr{H}_{L}}\\
 & =\sum_{n}\left(1+n\right)^{2}z^{n}\overline{w}^{n}\left\langle A^{1/2}s^{n},A^{1/2}s^{n}\right\rangle _{\mathscr{H}_{L}}\\
 & =\sum_{n}\left(1+n\right)z^{n}\overline{w}^{n}\left\langle s^{n},s^{n}\right\rangle _{\mathscr{H}_{L}}\\
 & =\sum_{n}z^{n}\overline{w}^{n}=\left\langle K_{z},K_{w}\right\rangle _{\mathscr{H}_{K}}=K\left(z,w\right).
\end{align*}
\end{proof}
\begin{rem}
If $j:\mathscr{H}_{K}\rightarrow\mathscr{H}_{L}$ is the inclusion
map, then the adjoint $j^{*}:\mathscr{H}_{L}\rightarrow\mathscr{H}_{K}$
is given by $j^{*}\left(z^{n}\right)=\left(1+n\right)^{-1}z^{n}$.
Therefore, the operator $A$ in \eqref{eq:F6} is precisely the contraction
$A=jj^{*}:\mathscr{H}_{L}\rightarrow\mathscr{H}_{L}$. 
\end{rem}

More generally, we have:
\begin{thm}
\label{thm:f6}Let $K,L$ be p.d. kernels on $X\times X$, with $\left(\varphi,\mathscr{K}\right)\in H_{S}\left(K\right)$
and $\left(\psi,\mathscr{L}\right)\in H_{S}\left(L\right)$. Then
$K\leq L$ if and only if there exists a positive selfadjoint operator
on $B$ on $\mathscr{L}$, such that $0\leq B\leq I$, and 
\[
K\left(x,y\right)=\left\langle \varphi\left(x\right),\varphi\left(y\right)\right\rangle _{\mathscr{K}}=\left\langle B^{1/2}\psi\left(x\right),B^{1/2}\psi\left(y\right)\right\rangle _{\mathscr{L}},\quad x,y\in X.
\]
\end{thm}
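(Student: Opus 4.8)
The plan is to reduce everything to the canonical RKHS picture already established in \prettyref{lem:f1} and \prettyref{cor:f3}, and then to transport the contraction $A$ living on $\mathscr{H}_{L}$ across to the abstract feature space $\mathscr{L}$ by means of an isometry. Note that the first equality $K\left(x,y\right)=\left\langle \varphi\left(x\right),\varphi\left(y\right)\right\rangle _{\mathscr{K}}$ is nothing but the hypothesis $\left(\varphi,\mathscr{K}\right)\in H_{S}\left(K\right)$, so all of the content lies in producing the operator $B$ on $\mathscr{L}$ realizing the second equality.

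For the forward implication, I would assume $K\leq L$. Since $\left\langle \psi\left(x\right),\psi\left(y\right)\right\rangle _{\mathscr{L}}=L\left(x,y\right)=\left\langle L_{x},L_{y}\right\rangle _{\mathscr{H}_{L}}$, the assignment $L_{x}\mapsto\psi\left(x\right)$ extends, by the matching of inner products, to a well-defined isometry $W:\mathscr{H}_{L}\rightarrow\mathscr{L}$ onto $\overline{\mathrm{span}}\left\{ \psi\left(x\right):x\in X\right\} $; thus $W^{*}W=I_{\mathscr{H}_{L}}$ and $WW^{*}=P$, the orthogonal projection of $\mathscr{L}$ onto that closed span. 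Now \prettyref{lem:f1} supplies a positive selfadjoint $A$ on $\mathscr{H}_{L}$ with $0\leq A\leq I$ and $K\left(x,y\right)=\left\langle A^{1/2}L_{x},A^{1/2}L_{y}\right\rangle _{\mathscr{H}_{L}}=\left\langle AL_{x},L_{y}\right\rangle _{\mathscr{H}_{L}}$. I would then set $B:=WAW^{*}$. It is immediate that $B$ is selfadjoint, that $B\geq0$, and that $B=WAW^{*}\leq WW^{*}=P\leq I$, so $0\leq B\leq I$. Finally, using $W^{*}\psi\left(x\right)=W^{*}WL_{x}=L_{x}$, one computes $\left\langle B^{1/2}\psi\left(x\right),B^{1/2}\psi\left(y\right)\right\rangle _{\mathscr{L}}=\left\langle B\psi\left(x\right),\psi\left(y\right)\right\rangle _{\mathscr{L}}=\left\langle WAL_{x},WL_{y}\right\rangle _{\mathscr{L}}=\left\langle AL_{x},L_{y}\right\rangle _{\mathscr{H}_{L}}=K\left(x,y\right)$, as desired.

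For the converse, suppose such a $B$ exists with $0\leq B\leq I$. I would fix $n$, points $x_{1},\dots,x_{n}\in X$ and scalars $c_{1},\dots,c_{n}$, and set $\xi:=\sum_{i}c_{i}\psi\left(x_{i}\right)\in\mathscr{L}$. Using the feature representations of $L$ and $K$, the double sum collapses to $\sum_{i,j}\overline{c_{i}}c_{j}\left(L-K\right)\left(x_{i},x_{j}\right)=\left\Vert \xi\right\Vert _{\mathscr{L}}^{2}-\left\Vert B^{1/2}\xi\right\Vert _{\mathscr{L}}^{2}=\left\langle \left(I-B\right)\xi,\xi\right\rangle _{\mathscr{L}}\geq0$, since $0\leq B\leq I$ forces $I-B\geq0$. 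Hence $L-K$ is p.d., i.e. $K\leq L$.

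The only genuinely delicate point is the forward direction, and specifically the extension of $B$ from the subspace $\overline{\mathrm{span}}\left\{ \psi\left(x\right)\right\} $ to all of $\mathscr{L}$: when $\psi$ does not have dense range, the natural $K$-form is defined only on that subspace, and one must extend it by zero on the orthogonal complement. Conjugating the canonical $A$ by the isometry $W$ handles this automatically, as $WAW^{*}$ vanishes on $\left(\mathrm{ran}\,W\right)^{\perp}$; alternatively, one can rerun the boundedness estimate of \prettyref{lem:f1} verbatim with $\psi\left(x\right)$ in place of $L_{x}$ and then invoke the quadratic-form representation theorem on $\mathscr{L}$. I would present the isometry route, since it makes the relation $B=WAW^{*}$ (and hence $A=W^{*}BW$) transparent and exhibits $B$ as the feature-space avatar of the contraction $A$ from \prettyref{cor:f3}.
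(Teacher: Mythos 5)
Your proof is correct, and it is in fact more complete than the paper's, whose entire argument for this theorem is the pointer ``see the proof of \prettyref{lem:f1}'' --- i.e., the reader is implicitly asked to rerun the Cauchy--Schwarz estimate of that lemma with $\psi\left(x\right)$ in place of $L_{x}$ and then invoke the representation theorem for bounded quadratic forms on $\mathscr{L}$, which is precisely the alternative route you mention at the end. Your primary route is genuinely different in packaging: you take the contraction $A$ already constructed on $\mathscr{H}_{L}$ in \prettyref{lem:f1}, build the canonical isometry $W:\mathscr{H}_{L}\rightarrow\mathscr{L}$, $L_{x}\mapsto\psi\left(x\right)$ (well defined because the inner products match), and set $B=WAW^{*}$. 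This buys two things the paper leaves implicit: it handles cleanly the case where $\psi$ does not have dense range, since $WAW^{*}$ automatically extends the $K$-form by zero on $\left(\operatorname{ran}W\right)^{\perp}$, and it exhibits the precise relation $A=W^{*}BW$ between the feature-space operator $B$ and the contraction of \prettyref{cor:f3}. You also supply the converse implication ($0\leq B\leq I$ gives $\sum_{i,j}\overline{c_{i}}c_{j}\left(L-K\right)\left(x_{i},x_{j}\right)=\left\langle \left(I-B\right)\xi,\xi\right\rangle _{\mathscr{L}}\geq0$), which the paper's one-line proof does not spell out even though the statement is an ``if and only if.'' One minor remark: the extension of $B$ off $\overline{\operatorname{span}}\left\{ \psi\left(x\right):x\in X\right\} $ is not unique (any positive contraction on the orthogonal complement would serve), so existence rather than uniqueness is indeed the right quantifier, matching the statement.
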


\begin{proof}
See the proof of \prettyref{lem:f1}. 
\end{proof}
\begin{cor}[Multipliers]
\label{cor:f5}Let $K$ be a p.d. kernel on $\mathbb{D}$ and $\mathscr{H}_{K}$
be the corresponding RKHS. For $\varphi$ in the unit ball $\left(H^{\infty}\right)_{1}$
of $H^{\infty}$, the function 
\[
K^{*}\left(z,w\right)=\left(1-\overline{\varphi\left(w\right)}\varphi\left(z\right)\right)K\left(z,w\right)
\]
is a p.d. kernel on $\mathbb{D}$ if and only if $\varphi$ is a contractive
multiplier on $\mathscr{H}$, i.e., $\left\Vert \varphi h\right\Vert _{\mathscr{H}_{K}}\leq\left\Vert h\right\Vert _{\mathscr{H}_{K}}$
for all $h\in\mathscr{H}_{K}$.
\end{cor}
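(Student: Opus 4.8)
The plan is to recast the positive definiteness of $K^{*}$ as a contractivity property of the multiplication operator $M_{\varphi}\colon h\mapsto\varphi h$ on $\mathscr{H}_{K}$, using that a multiplier acts on reproducing kernels through its adjoint by the rule $M_{\varphi}^{*}K_{w}=\overline{\varphi\left(w\right)}K_{w}$ (an immediate consequence of the reproducing property). The entire argument hinges on a single algebraic identity that carries no boundedness hypothesis. Introduce the linear map $T$ on $H_{0}=span\left\{ K_{x}:x\in X\right\} $ determined by $TK_{w}=\overline{\varphi\left(w\right)}K_{w}$. Then, for any finite family $\left(x_{i}\right)$ in $\mathbb{D}$ and scalars $\left(c_{i}\right)$, writing $f=\sum_{i}c_{i}K_{x_{i}}$, I would verify
\[
\sum_{i,j}\overline{c_{i}}c_{j}K^{*}\left(x_{i},x_{j}\right)=\left\Vert f\right\Vert _{\mathscr{H}_{K}}^{2}-\left\Vert Tf\right\Vert _{\mathscr{H}_{K}}^{2}.
\]
This follows by expanding both norms with $\left\langle K_{x_{i}},K_{x_{j}}\right\rangle _{\mathscr{H}_{K}}=K\left(x_{i},x_{j}\right)$ and collecting the factor $1-\overline{\varphi\left(x_{j}\right)}\varphi\left(x_{i}\right)$.

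For the implication ``contractive multiplier $\Rightarrow$ $K^{*}$ p.d.'', note that when $\left\Vert M_{\varphi}\right\Vert \leq1$ the rule above shows $T$ is the restriction to $H_{0}$ of $M_{\varphi}^{*}$, which is a contraction since $\left\Vert M_{\varphi}^{*}\right\Vert =\left\Vert M_{\varphi}\right\Vert \leq1$. The identity then gives $\sum_{i,j}\overline{c_{i}}c_{j}K^{*}\left(x_{i},x_{j}\right)=\left\Vert f\right\Vert ^{2}-\left\Vert M_{\varphi}^{*}f\right\Vert ^{2}\geq0$, so $K^{*}$ is positive definite.

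For the converse, I expect the main obstacle to be that positive definiteness of $K^{*}$ a priori only controls quantities on the dense subspace $H_{0}$, and does not even tell us that $\varphi\mathscr{H}_{K}\subseteq\mathscr{H}_{K}$; the multiplier must be manufactured from scratch rather than assumed. The identity resolves this: if $K^{*}$ is p.d. then $\left\Vert Tf\right\Vert \leq\left\Vert f\right\Vert $ for all $f\in H_{0}$, so $T$ extends to a contraction on $\mathscr{H}_{K}$. It then remains to identify its adjoint. Testing against reproducing kernels, $\left\langle T^{*}f,K_{z}\right\rangle =\left\langle f,TK_{z}\right\rangle =\overline{\varphi\left(z\right)}\left\langle f,K_{z}\right\rangle =\overline{\varphi\left(z\right)f\left(z\right)}$, whence $\left(T^{*}f\right)\left(z\right)=\varphi\left(z\right)f\left(z\right)$; that is, $T^{*}=M_{\varphi}$ is genuine multiplication by $\varphi$, with $\left\Vert M_{\varphi}\right\Vert =\left\Vert T\right\Vert \leq1$. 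Thus $\varphi$ is a contractive multiplier.

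Conceptually, this also fits the Loewner-order framework of the section: since $\overline{\varphi\left(w\right)}\varphi\left(z\right)$ is p.d. and products of p.d. kernels are p.d. (\prettyref{prop:b4}), the kernel $K_{\varphi}\left(z,w\right)=\overline{\varphi\left(w\right)}\varphi\left(z\right)K\left(z,w\right)$ is p.d., and $K^{*}=K-K_{\varphi}$ is p.d. exactly when $K_{\varphi}\leq K$. The operator produced by \prettyref{thm:f6} for the pair $K_{\varphi}\leq K$ is then $B=M_{\varphi}M_{\varphi}^{*}$, so the corollary is precisely the assertion $0\leq M_{\varphi}M_{\varphi}^{*}\leq I$, which is the same contraction condition reached above.
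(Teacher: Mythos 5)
Your proof is correct, and it takes a genuinely different route from the paper's. The paper expands $\overline{\varphi\left(w\right)}\varphi\left(z\right)K\left(z,w\right)=\sum_{n}\overline{\varphi\left(w\right)f_{n}\left(w\right)}\varphi\left(z\right)f_{n}\left(z\right)$ over an ONB $\left\{ f_{n}\right\}$ of $\mathscr{H}_{K}$ and then invokes \prettyref{thm:f6} to translate the Loewner inequality $\overline{\varphi\left(w\right)}\varphi\left(z\right)K\leq K$ into contractivity of the operator $f_{n}\mapsto\varphi f_{n}$, i.e., of $M_{\varphi}$. You instead work entirely on the adjoint side: the operator $TK_{w}=\overline{\varphi\left(w\right)}K_{w}$ on $span\left\{ K_{x}\right\}$, the quadratic-form identity $\sum_{i,j}\overline{c_{i}}c_{j}K^{*}\left(x_{i},x_{j}\right)=\left\Vert f\right\Vert ^{2}-\left\Vert Tf\right\Vert ^{2}$, and the computation $\left(T^{*}f\right)\left(z\right)=\varphi\left(z\right)f\left(z\right)$. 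The two arguments are two faces of the same fact (your closing remark that the operator of \prettyref{thm:f6} is $B=M_{\varphi}M_{\varphi}^{*}$ makes the dictionary explicit), but your version buys something concrete in the converse direction: positive definiteness of $K^{*}$ a priori only controls the dense span of kernel functions, and it is not given in advance that $\varphi\mathscr{H}_{K}\subseteq\mathscr{H}_{K}$; extending $T$ to a contraction and identifying $T^{*}=M_{\varphi}$ manufactures the multiplier and its norm bound simultaneously, a point the paper's one-line appeal to \prettyref{thm:f6} leaves implicit (as does the question of whether $f_{n}\mapsto\varphi f_{n}$ is even well defined as a bounded operator before contractivity is known). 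What the paper's route buys in exchange is that the corollary appears as an immediate instance of the general order-versus-operator theorem that organizes the section.
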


\begin{proof}
The assertion is equivalent to: 
\begin{gather}
\overline{\varphi\left(w\right)}\varphi\left(z\right)K\left(z,w\right)\leq K\left(z,w\right)\label{eq:f6}\\
\Updownarrow\nonumber \\
\left\Vert \varphi h\right\Vert _{\mathscr{H}_{K}}\leq\left\Vert h\right\Vert _{\mathscr{H}_{K}},\forall h\in\mathscr{H}_{K}.\label{eq:f7}
\end{gather}
Pick an ONB $\left\{ f_{n}\right\} $ for $\mathscr{H}_{K}$, then
\begin{equation}
\overline{\varphi\left(w\right)}\varphi\left(z\right)K\left(z,w\right)=\sum_{n}\overline{\varphi\left(w\right)f_{n}\left(w\right)}\varphi\left(z\right)f_{n}\left(z\right)\label{eq:f1}
\end{equation}
Therefore, an application of \prettyref{thm:f6} to \eqref{eq:f1}
shows that \eqref{eq:f6} holds if and only if the operator $\mathscr{H}_{K}\rightarrow\mathscr{H}_{K}$,
$f_{n}\mapsto\varphi f_{n}$ is contractive, thus the equivalence
to \eqref{eq:f7}. 
\end{proof}
Next, we focus on certain limit constructions of p.d. kernels. 
\begin{defn}
Given a set $X$, let $Pos\left(X\right)$ be the set of all p.d.
kernels defined on $X\times X$. 
\end{defn}

\begin{thm}
Let $K_{1}\leq K_{2}\leq\cdots\leq K_{n}\leq K_{n+1}\leq\cdots$,
with $K_{n}\in Pos\left(X\right)$ for all $n\in\mathbb{N}$. Assume
that for all $x\in X$,
\begin{equation}
\sup_{n}K_{n}\left(x,x\right)=S\left(x\right)<\infty,\label{eq:f11}
\end{equation}
then the Hilbert completion 
\begin{equation}
\mathscr{H}_{\tilde{K}}:=\left(\bigcup_{n}\mathscr{H}_{K_{n}}\right)^{\sim},
\end{equation}
is the RKHS of a limit p.d. kernel 
\begin{equation}
\tilde{K}\left(x,y\right):=\lim_{n\rightarrow\infty}K_{n}\left(x,y\right),
\end{equation}
defined on $X\times X$. 
\end{thm}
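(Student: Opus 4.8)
The plan is to build the limit kernel $\tilde K$ first, verify it lies in $Pos(X)$, and then realize the RKHS of $\tilde K$ as the closed union of the increasing tower $\mathscr{H}_{K_1}\subset\mathscr{H}_{K_2}\subset\cdots$. For pointwise convergence of $K_n(x,y)$, note that on the diagonal $n\mapsto K_n(x,x)$ is nondecreasing, since $K_{n+1}-K_n$ is p.d., and bounded above by $S(x)$, hence convergent. For $x\neq y$ I would apply the Cauchy--Schwarz inequality for the p.d. difference $K_m-K_n$ ($m>n$), namely $\left|(K_m-K_n)(x,y)\right|^2\le(K_m-K_n)(x,x)\,(K_m-K_n)(y,y)$, whose right side tends to $0$ as $m,n\to\infty$ because the diagonal sequences are Cauchy; thus $\left(K_n(x,y)\right)_n$ is Cauchy and $\tilde K(x,y):=\lim_n K_n(x,y)$ is well defined. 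Positivity passes to the limit: for any finite system the quadratic sum $\sum_{i,j}\overline{c_i}c_j K_n(x_i,x_j)\ge0$ for each $n$, so $\tilde K\in Pos(X)$.

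Next I would organize the inclusions. Since $\tilde K-K_n=\lim_m(K_m-K_n)$ is again a pointwise limit of p.d. kernels, $K_n\le\tilde K$ for every $n$. Aronszajn's inclusion theorem (the one recalled just before \prettyref{lem:f1}) then supplies contractive inclusions $\mathscr{H}_{K_n}\hookrightarrow\mathscr{H}_{\tilde K}$, i.e. $\left\Vert f\right\Vert_{\mathscr{H}_{\tilde K}}\le\left\Vert f\right\Vert_{\mathscr{H}_{K_n}}$ for $f\in\mathscr{H}_{K_n}$, and $K_n\le K_{n+1}$ makes these embeddings compatible. Hence $\bigcup_n\mathscr{H}_{K_n}$ sits as a nested subspace of $\mathscr{H}_{\tilde K}$, and the completion is naturally taken inside $\mathscr{H}_{\tilde K}$.

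The crux is density of $\bigcup_n\mathscr{H}_{K_n}$ in $\mathscr{H}_{\tilde K}$. Since $\mathrm{span}\{\tilde K_x\}$ is dense in $\mathscr{H}_{\tilde K}$ by construction of the RKHS, it suffices to approximate each reproducing element $\tilde K_x$ by the elements $K_{n,x}:=K_n(\cdot,x)\in\mathscr{H}_{K_n}$. Using the reproducing property $\left\langle\tilde K_x,K_{n,x}\right\rangle_{\mathscr{H}_{\tilde K}}=K_{n,x}(x)=K_n(x,x)$ together with the contractive bound $\left\Vert K_{n,x}\right\Vert_{\mathscr{H}_{\tilde K}}^2\le\left\Vert K_{n,x}\right\Vert_{\mathscr{H}_{K_n}}^2=K_n(x,x)$, I would compute
\[
\left\Vert\tilde K_x-K_{n,x}\right\Vert_{\mathscr{H}_{\tilde K}}^2=\tilde K(x,x)-2K_n(x,x)+\left\Vert K_{n,x}\right\Vert_{\mathscr{H}_{\tilde K}}^2\le\tilde K(x,x)-K_n(x,x)\xrightarrow[n\to\infty]{}0.
\]
Therefore $\tilde K_x\in\overline{\bigcup_n\mathscr{H}_{K_n}}$, the union is dense, and its completion inside $\mathscr{H}_{\tilde K}$ is all of $\mathscr{H}_{\tilde K}$, which is exactly the assertion.

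The step I expect to be the main obstacle is reconciling the \emph{intrinsic} completion of $\bigcup_n\mathscr{H}_{K_n}$ — formed with the limit inner product $\left\langle f,g\right\rangle_\infty:=\lim_n\left\langle f,g\right\rangle_{\mathscr{H}_{K_n}}$, which exists by polarization since the norms decrease and are bounded below — with the ambient norm of $\mathscr{H}_{\tilde K}$. Contractivity yields only $\left\Vert f\right\Vert_{\mathscr{H}_{\tilde K}}\le\left\Vert f\right\Vert_\infty$; the reverse inequality, and hence the isometry, requires the monotone positive contractions $A_n$ of \prettyref{lem:f1} and \prettyref{cor:f3} (with $0\le A_n\le I$ and $K_n(x,y)=\left\langle A_n\tilde K_x,\tilde K_y\right\rangle_{\mathscr{H}_{\tilde K}}$) to converge strongly to $I$; this holds because $(A_n)$ is increasing, bounded by $I$, and converges weakly to $I$ on the total set $\{\tilde K_x\}$. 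I would also record non-degeneracy of $\left\langle\cdot,\cdot\right\rangle_\infty$ via $\left|f(x)\right|\le\left\Vert f\right\Vert_{\mathscr{H}_{K_n}}\sqrt{K_n(x,x)}\to\left\Vert f\right\Vert_\infty\sqrt{\tilde K(x,x)}$, so any $\left\Vert\cdot\right\Vert_\infty$-null vector vanishes pointwise. If instead the completion is read as the closure taken inside $\mathscr{H}_{\tilde K}$, this obstacle disappears and the density computation above already finishes the proof.
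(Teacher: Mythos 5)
Your proof is correct, and it takes a genuinely different --- and in fact more complete --- route than the paper's. The paper obtains the existence and positivity of $\tilde{K}$ by observing that each quadratic form $F_{n}(\vec{c},\vec{x},N)=\sum_{i,j}\overline{c_{i}}c_{j}K_{n}(x_{i},x_{j})$ is nondecreasing in $n$ and bounded (via \eqref{eq:f11} and Cauchy--Schwarz on the diagonal), so that every such form converges and $\tilde{K}$ is recovered by polarization; it then simply asserts the norm relations $\left\Vert f\right\Vert _{\mathscr{H}_{K_{1}}}\geq\left\Vert f\right\Vert _{\mathscr{H}_{K_{2}}}\geq\cdots$ and $\left\Vert f\right\Vert _{\mathscr{H}_{\tilde{K}}}=\lim_{n}\left\Vert f\right\Vert _{\mathscr{H}_{K_{n}}}$ without proving the density of $\bigcup_{n}\mathscr{H}_{K_{n}}$ in $\mathscr{H}_{\tilde{K}}$. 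You instead get pointwise convergence directly from the Cauchy--Schwarz inequality applied to the p.d.\ differences $K_{m}-K_{n}$, which is cleaner and avoids polarization, and --- more importantly --- you actually supply the missing heart of the theorem: the estimate $\left\Vert \tilde{K}_{x}-K_{n,x}\right\Vert _{\mathscr{H}_{\tilde{K}}}^{2}\leq\tilde{K}(x,x)-K_{n}(x,x)\rightarrow0$, which proves that the reproducing elements of $\tilde{K}$ are approximated from the tower and hence that the union is dense. What the paper's approach buys is brevity and a direct identification of the limiting quadratic form as a supremum; what yours buys is an actual proof of the completion claim, plus an honest flag of the one remaining subtlety (whether the completion is taken in the ambient $\mathscr{H}_{\tilde{K}}$-norm or in the intrinsic limit norm $\lim_{n}\left\Vert \cdot\right\Vert _{\mathscr{H}_{K_{n}}}$). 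On that last point your sketch via the contractions $A_{n}$ of \prettyref{lem:f1} is the right idea but is still incomplete as stated: strong convergence $A_{n}\rightarrow I$ does not by itself yield $\left\Vert f\right\Vert _{\mathscr{H}_{K_{n}}}\rightarrow\left\Vert f\right\Vert _{\mathscr{H}_{\tilde{K}}}$, since the $\mathscr{H}_{K_{n}}$-norm involves the (unbounded) inverses $A_{n}^{-1}$ in form sense; a monotone-form-convergence argument (or Aronszajn's original proof of his increasing-limit theorem) is needed there. Since the paper itself asserts that identity without proof, your treatment is at no disadvantage, and under the reading of the statement as a closure inside $\mathscr{H}_{\tilde{K}}$ your argument is complete.
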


\begin{proof}
First note that, from \eqref{eq:f11}, we get the following boundedness:
\[
\left|K_{n}\left({\color{blue}x,y}\right)\right|^{2}\leq K_{n}\left(x,x\right)K_{n}\left(y,y\right)\leq S\left(x\right)S\left(y\right)<\infty,
\]
and so the sequence 
\[
\left\{ K_{n}\left(x,y\right)\right\} _{n\in\mathbb{N}}
\]
is bounded in $\mathbb{C}$ for $\forall\left(x,y\right)\in X\times X$. 

For all $N\in\mathbb{N}$, $c_{i}\in\mathbb{C}$, $x_{i}\in X$, $1\leq i\leq N$,
set 
\begin{equation}
F_{n}\left(\vec{c},\vec{x},N\right)=\sum_{i}\sum_{j}\overline{c_{i}}c_{j}K_{n}\left(x_{i},x_{j}\right).
\end{equation}
Since $K_{n}\leq K_{n+1}$, it follows that 
\[
F_{n}\left(\vec{c},\vec{x},N\right)\leq F_{n+1}\left(\vec{c},\vec{x},N\right)
\]
and that 
\[
\sup_{n\in\mathbb{N}}F_{n}\left(\vec{c},\vec{x},N\right)<\infty\;\left(\text{by \ensuremath{\left(\ref{eq:f11}\right)}}\right).
\]
Hence $\tilde{K}$ is well defined, and 
\[
\sum_{i}\sum_{j}\overline{c_{i}}c_{j}\tilde{K}\left(x_{i},x_{j}\right)=\sup_{n}F_{n}\left(c,\vec{x},N\right)<\infty.
\]

In this case, for every $f\in\mathscr{H}_{K_{1}}\subset\mathscr{H}_{K_{2}}\subset\cdots$,
\begin{equation}
\left\Vert f\right\Vert _{\mathscr{H}_{K_{1}}}\geq\left\Vert f\right\Vert _{\mathscr{H}_{K_{2}}}\geq\left\Vert f\right\Vert _{\mathscr{H}_{K_{3}}}\label{eq:E16}
\end{equation}
and we have
\[
\left\Vert f\right\Vert _{\mathscr{H}_{\tilde{K}}}=\lim_{n}\left\Vert f\right\Vert _{\mathscr{H}_{K_{n}}}.
\]
\end{proof}
\begin{rem}
\label{rem:e11}Condition \eqref{eq:f11} is necessary for this construction.
For example, consider $K_{n}\left(z,w\right)=\left(1-\overline{w}z\right)^{-n}$
on $\mathbb{D}\times\mathbb{D}$, where $n\in\mathbb{N}$. Then 
\[
1=\left\Vert z^{k}\right\Vert _{\mathscr{H}_{K_{1}}}>\underset{\frac{1}{\sqrt{1+k}}}{\underbrace{\left\Vert z^{k}\right\Vert _{\mathscr{H}_{K_{2}}}}}>\left\Vert z^{k}\right\Vert _{\mathscr{H}_{K_{3}}}\rightarrow0,\;n\rightarrow\infty.
\]
\end{rem}

As an application we mention the following Cantor construction and
a monotone kernel limit. While the example selects a particular scaling-iteration,
the idea will apply more generally to a variety of iterated function
system constructions (IFSs). For background on IFSs, see e.g., \cite{MR4590528,MR4291375}.
\begin{lem}
\label{lem:f11}Let $f\in C\left(\left[0,1\right]\right)$, and extend
it to $\mathbb{R}$ by setting $f\left(x\right)=0$ for $x\notin\left[0,1\right]$.
Define $T^{0}f=f$, and 
\begin{equation}
T^{n}f\left(x\right)=T^{n-1}f\left(3x\right)+T^{n-1}f\left(3x-2\right),\quad n\in\mathbb{N}.\label{eq:f15}
\end{equation}
Then the limit (pointwise)
\begin{equation}
F\left(x\right)=\lim_{n\rightarrow\infty}T^{n}f\left(x\right)\label{eq:f16}
\end{equation}
is supported in the middle-third Cantor set $C_{1/3}$. (See \prettyref{fig:f1}
for an illustration.)
\end{lem}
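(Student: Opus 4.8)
The plan is to unwind the recursion \eqref{eq:f15} into a closed form and then read off both the support and the limit. Iterating, I expect
\[
T^{n}f\left(x\right)=\sum_{m\in S_{n}}f\!\left(3^{n}x-m\right),\qquad S_{n}=\Bigl\{\textstyle\sum_{i=1}^{n}d_{i}\,3^{\,n-i}:d_{i}\in\left\{ 0,2\right\} \Bigr\},
\]
where $S_{n}$ is the set of integers in $\left[0,3^{n}\right)$ whose base-$3$ expansion uses only the digits $0$ and $2$. This is an induction on $n$: the case $n=1$ is $T^{1}f\left(x\right)=f\left(3x\right)+f\left(3x-2\right)$, matching $S_{1}=\left\{ 0,2\right\}$; and since $T^{n}f\left(x\right)=T^{n-1}f\left(3x\right)+T^{n-1}f\left(3x-2\right)$, each shift $m'\in S_{n-1}$ is replaced by $3m'$ and $3m'+2$, which is precisely the rule that prepends a digit $0$ or $2$ and so generates $S_{n}$ from $S_{n-1}$.

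Next I would analyze supports. Since $f$ vanishes off $\left[0,1\right]$, the summand $f\left(3^{n}x-m\right)$ is supported on the closed interval $\left[m/3^{n},\left(m+1\right)/3^{n}\right]$, and as $m$ runs over $S_{n}$ these are exactly the $2^{n}$ intervals of length $3^{-n}$ that make up the $n$-th stage $C_{n}$ of the middle-thirds construction. Every element of $S_{n}$ is even (each term $d_{i}3^{\,n-i}$ is even), so consecutive elements differ by at least $2$; hence the supporting intervals are pairwise disjoint, for each $x$ \emph{at most one} summand is nonzero, $\lvert T^{n}f\left(x\right)\rvert\le\lVert f\rVert_{\infty}$, and $\operatorname{supp}\left(T^{n}f\right)\subseteq C_{n}$.

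From this the support of $F$ is immediate. As $C_{1}\supseteq C_{2}\supseteq\cdots$ with $\bigcap_{n}C_{n}=C_{1/3}$, any $x\notin C_{1/3}$ lies outside some $C_{N}$, hence outside every $C_{n}$ with $n\ge N$, so $T^{n}f\left(x\right)=0$ for $n\ge N$ and $F\left(x\right)=0$. This is exactly the assertion that $F$ is supported in $C_{1/3}$. For $x\in C_{1/3}$, with ternary expansion $x=\sum_{i\ge1}a_{i}3^{-i}$, $a_{i}\in\left\{ 0,2\right\}$, the unique surviving term gives $T^{n}f\left(x\right)=f\!\left(\sum_{j\ge1}a_{n+j}3^{-j}\right)$, i.e. $f$ evaluated at the $n$-fold left-shift of the tail of $x$.

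The step I expect to be the main obstacle is the existence of the pointwise limit \emph{on} $C_{1/3}$ itself: the argument above is the orbit of $x$ under the one-sided shift, which for the point $x=\tfrac{3}{4}=\sum_{k\ge0}2\cdot3^{-\left(2k+1\right)}$ alternates between $\tfrac{1}{4}$ and $\tfrac{3}{4}$, so $T^{n}f\left(x\right)$ need not converge for a general $f\in C\left(\left[0,1\right]\right)$. I would therefore anchor the conclusion on its robust content --- $F\equiv0$ off $C_{1/3}$, together with $T^{n}f\to0$ in $L^{1}$ since $\int T^{n}f\,dx=\left(2/3\right)^{n}\int_{0}^{1}f\,dx\to0$ --- and obtain genuine pointwise convergence everywhere only after imposing a shift-compatibility condition on $f$ (e.g. $f$ constant, or the values of $f$ matching across the endpoints identified by the shift), under which the limit exists and is supported in $C_{1/3}$ as stated.
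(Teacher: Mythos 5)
Your proposal is correct, and it is considerably more explicit than the paper's own argument. The paper's proof merely recalls the IFS construction $C_{n}=\tau_{1}\left(C_{n-1}\right)\cup\tau_{2}\left(C_{n-1}\right)$ with $\tau_{1}\left(x\right)=x/3$, $\tau_{2}\left(x\right)=\left(x+2\right)/3$, and asserts that \eqref{eq:f15} is the ``dual'' recursion on functions. Your closed form $T^{n}f\left(x\right)=\sum_{m\in S_{n}}f\left(3^{n}x-m\right)$, with $S_{n}$ the integers whose base-$3$ digits lie in $\left\{ 0,2\right\} $, is exactly what makes that duality precise; the observation that the $2^{n}$ supporting intervals $\left[m/3^{n},\left(m+1\right)/3^{n}\right]$ are the components of $C_{n}$ and are pairwise disjoint (so at most one summand survives at each $x$) yields $T^{n}f\equiv0$ off $C_{n}$ and hence $F\equiv0$ off $C_{1/3}=\bigcap_{n}C_{n}$. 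One small slip: the recursion $T^{n}f\left(x\right)=T^{n-1}f\left(3x\right)+T^{n-1}f\left(3x-2\right)$ passes from $S_{n-1}$ to $S_{n}$ by $m'\mapsto m'$ and $m'\mapsto m'+2\cdot3^{n-1}$ (new digit in the \emph{most} significant position), not by $m'\mapsto3m',3m'+2$; both rules generate the same set $S_{n}$, so nothing downstream is affected.

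More importantly, the obstacle you flag at the end is real and is not addressed by the paper. On $C_{1/3}$ your formula gives $T^{n}f\left(x\right)=f\left(\sigma^{n}x\right)$, i.e.\ $f$ evaluated along the orbit of the one-sided shift on ternary digits, and for $x=3/4=0.\overline{20}_{3}$ this alternates between $f\left(3/4\right)$ and $f\left(1/4\right)$; taking $f\left(t\right)=t$ shows that the pointwise limit in \eqref{eq:f16} does not exist for general $f\in C\left(\left[0,1\right]\right)$. So the lemma is literally correct only off $C_{1/3}$ (where the sequence is eventually zero), and the honest conclusion is the one you extract: $\operatorname{supp}\left(T^{n}f\right)\subseteq C_{n}$, hence any pointwise limit, where it exists, is supported in $C_{1/3}$, while convergence \emph{on} $C_{1/3}$ requires an additional shift-compatibility hypothesis on $f$. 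This gap propagates to the theorem following the lemma, where the existence of $F_{i}=\lim_{n}T^{n}f_{i}$ is invoked for $f_{i}\left(x\right)=x^{i}$. Your write-up proves everything in the statement that is actually true and correctly isolates what is not; the paper's one-line proof does neither.
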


\begin{figure}
\includegraphics[width=0.9\columnwidth]{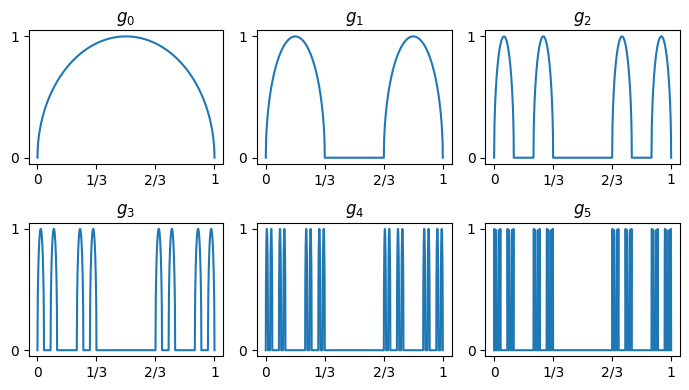}

\caption{\label{fig:f1}$g_{n}\left(x\right)=T^{n}f\left(x\right)$, $n=0,1,\cdots,5$.}
\end{figure}

\begin{proof}
Recall that $C_{1/3}$ is defined as follows: Let $I=\left[0,1\right]$.
Introduce two endomorphisms $\tau_{1},\tau_{2}:I\rightarrow I$, where
$\tau_{1}\left(x\right)=x/3$, $\tau_{2}\left(x\right)=\left(x+2\right)/3$.
Set $C_{0}=I$, and 
\[
C_{n}=\tau_{1}\left(C_{n-1}\right)\cup\tau_{2}\left(C_{n-1}\right),\quad n\in\mathbb{N}.
\]
Then 
\[
C_{1/3}=\bigcap_{n=0}^{\infty}C_{n}.
\]
Note \eqref{eq:f15} is the dual construction for functions on the
unit interval $I$. 
\end{proof}
\begin{thm}
Let $K$ be p.d. on $X\times X$ with $X=\left[0,1\right]$, such
that 
\[
K\left(x,y\right)=\sum_{i=0}^{\infty}f_{i}\left(x\right)\overline{f_{i}\left(y\right)},
\]
where $\left\{ f_{i}\right\} $ is an ONB for the corresponding RKHS
$\mathscr{H}_{K}$. Extend $f_{i}$ to $\mathbb{R}$ by setting $f_{i}\left(x\right)=0$
for $x\notin\left[0,1\right]$, and set 
\[
K_{n}\left(x,y\right)=\sum_{i=0}^{\infty}T^{n}f_{i}\left(x\right)\overline{T^{n}f_{i}\left(y\right)}.
\]
Then the limit 
\[
K_{\infty}\left(x,y\right)=\lim_{n\rightarrow\infty}K_{n}\left(x,y\right)=\lim_{n\rightarrow\infty}\sum_{i=0}^{\infty}T^{n}f_{i}\left(x\right)\overline{T^{n}f_{i}\left(y\right)}
\]
is a p.d. kernel on $C_{1/3}\times C_{1/3}$. 

Moreover, $K_{\infty}$ is invariant under the action of $T$, where
$T$ acts on a p.d. kernel $L$ on $X\times X$ by 
\[
L\left(x,y\right)=\sum_{i}l_{i}\left(x\right)\overline{l_{i}\left(y\right)}\longmapsto TL\left(x,y\right)=\sum_{i}Tl_{i}\left(x\right)\overline{Tl_{i}\left(y\right)},\quad x,y\in X.
\]
 
\end{thm}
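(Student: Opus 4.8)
The plan is to read the whole iteration through the single operator $V$ acting on functions by $Vg(x)=g(3x)+g(3x-2)$ (every function extended by $0$ off $[0,1]$), so that $T^{n}f_{i}=V^{n}f_{i}$ and $K_{n}=\sum_{i}V^{n}f_{i}\cdot\overline{V^{n}f_{i}}$. The first step is to record that $T$ descends to a transformation of kernels that is \emph{independent of the chosen frame}. Expanding the square, for any kernel $L(x,y)=\sum_{i}l_{i}(x)\overline{l_{i}(y)}$ one obtains
\[
TL(x,y)=\widetilde{L}(3x,3y)+\widetilde{L}(3x,3y-2)+\widetilde{L}(3x-2,3y)+\widetilde{L}(3x-2,3y-2),
\]
where $\widetilde{L}$ denotes the extension of $L$ by $0$ outside $[0,1]\times[0,1]$. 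Since the right-hand side is expressed through $L$ alone, $TL$ does not depend on the representation $\{l_{i}\}$, the action is well defined on $Pos(X)$, and $K_{n}=T^{n}K$.

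Second, I would establish positivity of each $K_{n}$ and identify its values on the Cantor set. Positivity is immediate: for finite $(c_{i}),(x_{i})$,
\[
\sum_{i,j}\overline{c_{i}}c_{j}K_{n}(x_{i},x_{j})=\sum_{l}\Big|\sum_{i}\overline{c_{i}}\,(T^{n}f_{l})(x_{i})\Big|^{2}\geq0,
\]
exactly as in the computations behind \prettyref{lem:b3} and \prettyref{prop:b4}. Iterating $V$ gives $V^{n}g(x)=\sum_{m\in D_{n}}g(3^{n}x-m)$, where $D_{n}$ is the set of length-$n$ base-$3$ integers with digits in $\{0,2\}$. For $x\in C_{1/3}$ with ternary expansion $x=\sum_{k\geq1}\epsilon_{k}3^{-k}$, $\epsilon_{k}\in\{0,2\}$, one has $3^{n}x=M_{n}+s^{n}x$ with $M_{n}\in D_{n}$ and $s^{n}x\in[0,1]$ the shifted tail; because the gaps in $D_{n}$ are at least $2$, only $m=M_{n}$ keeps $3^{n}x-m$ inside $[0,1]$. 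Hence exactly one term survives at each level and
\[
K_{n}(x,y)=K(s^{n}x,s^{n}y),\qquad x,y\in C_{1/3},
\]
where $s$ is the one-sided shift. For $x\notin C_{1/3}$ one has $x\notin C_{N}$ for some $N$, so $V^{n}f_{i}(x)=0$ for $n\geq N$; this is precisely \prettyref{lem:f11} and shows any pointwise limit vanishes off $C_{1/3}\times C_{1/3}$.

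Third, I would pass to the limit and read off invariance. Positive definiteness is preserved under pointwise limits, since each finite quadratic-form inequality survives $n\to\infty$, so it suffices to produce the limit and it is automatically p.d. Assuming $K$ bounded on $[0,1]\times[0,1]$ (e.g.\ continuous, as in the running Szeg\H{o}/Bergman examples), the identity above gives the uniform bound $|K_{n}|\leq\sup_{[0,1]^{2}}|K|$. Invariance is then formal: applying $T$ and using continuity of the finite-sum formula under pointwise convergence,
\[
TK_{\infty}=\lim_{n}TK_{n}=\lim_{n}K_{n+1}=K_{\infty}.
\]

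I expect the main obstacle to be the genuine convergence of $K_{n}(x,y)=K(s^{n}x,s^{n}y)$ as $n\to\infty$. Because $s$ is expansive — indeed $T$ is an \emph{isometry}, not a contraction, in the sup-norm on $C_{1/3}\times C_{1/3}$ — there is no fixed-point/Hutchinson mechanism forcing convergence, and for generic $K$ the orbit $(s^{n}x,s^{n}y)$ need not settle down. I would handle this either by (i) combining the uniform bound with a diagonal extraction over a countable dense subset of $C_{1/3}\times C_{1/3}$ to obtain a pointwise-convergent subsequence whose limit is p.d.\ and supported on $C_{1/3}$, or (ii) imposing the mild regularity on $K$ under which the shifted evaluations stabilize, which is what legitimizes the full limit $K_{\infty}$ and the clean invariance identity. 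Everything else — positivity, the four-term transformation law, the reduction to the shift, and the support statement — is robust and independent of this point.
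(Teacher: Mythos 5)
Your outline tracks the paper's own proof in its overall shape---show each $K_n$ is a well-defined p.d.\ kernel via the $\ell^2$/quadratic-form computation (\prettyref{lem:b3}), then pass to the pointwise limit and invoke \prettyref{lem:f11} for the support statement---but you go considerably further in making the mechanism explicit. The frame-independent four-term transformation law for $TL$ and the identification $K_n(x,y)=K(s^nx,s^ny)$ on $C_{1/3}\times C_{1/3}$ (exactly one surviving term because the gaps in $D_n$ are at least $2$) are both correct and do not appear in the paper; the paper's argument stops at the assertion that $F_i(x):=\lim_n T^nf_i(x)$ exists with support in $C_{1/3}$, citing \prettyref{lem:f11}, whose own proof only describes the Cantor iteration and does not actually establish pointwise convergence.

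The obstacle you flag at the end is therefore not a defect of your write-up but a genuine gap shared by the theorem and by the paper's proof, and your shift computation makes it concrete: on $C_{1/3}$ one has $T^nf_i(x)=f_i(s^nx)$ with $s$ the one-sided shift, so $\lim_n f_i(s^nx)$ fails to exist for generic $x$ unless the $f_i$ stabilize along shift orbits. For the paper's own running example $K(x,y)=(1-xy)^{-1}$ (\prettyref{fig:f2}) this gives $K_n(x,y)=(1-s^nx\,s^ny)^{-1}$ on the Cantor set, which oscillates---and is unbounded along subsequences for $x=y$ whose ternary expansion contains arbitrarily long runs of the digit $2$. So $K_\infty$ as literally written need not exist; one must either extract a subsequential limit (your option (i), which yields a p.d.\ kernel supported on $C_{1/3}\times C_{1/3}$ but only gives the invariance $TK_\infty=K_\infty$ if the subsequence is chosen compatibly with the shift) or impose regularity forcing $f_i(s^nx)$ to converge (your option (ii)); the paper supplies neither. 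Everything else in your proposal---positivity of each $K_n$, preservation of positive definiteness under pointwise limits, the vanishing off $C_{1/3}$, and the formal computation $TK_\infty=\lim_nK_{n+1}=K_\infty$---is correct and, if anything, more carefully justified than in the paper.
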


\begin{proof}
By assumption, $\left(f_{i}\left(x\right)\right)\in l^{2}$, $\forall x\in\mathbb{R}$.
Note that 
\[
\left(f_{i}\left(x\right)\right)\in l^{2},\forall x\in\mathbb{R}\Longrightarrow\left(Tf_{i}\left(x\right)\right)\in l^{2},\forall x\in\mathbb{R}
\]
since 
\[
\left\Vert \left(Tf_{i}\left(x\right)\right)\right\Vert _{l^{2}}^{2}\leq\left\Vert \left(f_{i}\left(3x\right)\right)\right\Vert _{l^{2}}^{2}+\left\Vert \left(f_{i}\left(3x-2\right)\right)\right\Vert _{l^{2}}^{2}<\infty.
\]
Therefore, $K_{n}$ is a well defined p.d. kernel, for all $n\in\mathbb{N}$. 

The conclusion follows by passing to the limit, where $F_{i}\left(x\right):=\lim_{n\rightarrow\infty}T^{n}f_{i}\left(x\right)$
exists, and has support in $C_{1/3}$, by \eqref{eq:f15}--\eqref{eq:f16}.
See \prettyref{fig:f2} for an illustration.
\end{proof}
\begin{figure}
\includegraphics[width=0.8\columnwidth]{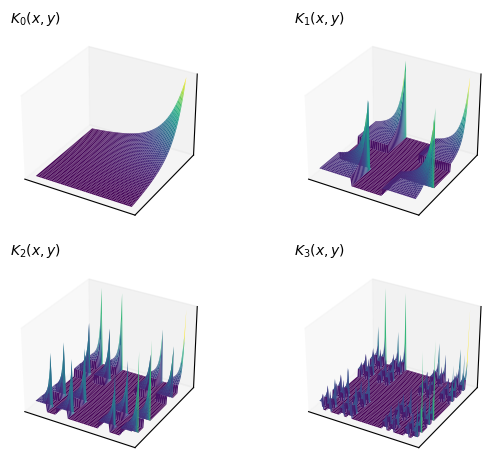}
\begin{align*}
K\left(x,y\right) & =\frac{1}{1-xy}=\sum_{i=0}^{\infty}x^{i}y^{i}=\sum_{i=0}^{\infty}f_{i}\left(x\right)f_{i}\left(y\right)\\
K_{n}\left(x,y\right) & =\sum_{i=0}^{\infty}T^{n}f_{i}\left(x\right)T^{n}f_{i}\left(y\right),\quad x,y\in\left(0,1\right).
\end{align*}

\caption{\label{fig:f2}$K_{n}$, $n=0,1,2,3$.}
\end{figure}

Of special significance to the above discussion are the following
citations \cite{MR4561157, MR4295177, MR4294330, MR3173967, MR0486556}.

\section{\label{sec:F}RKHS of analytic functions}

As noted in \prettyref{sec:4}, an identification of good kernels,
and their corresponding RKHSs, depend on the particular function spaces
that arise as RKHSs. The choices when the RKHSs consist of Hilbert
spaces of analytic functions has received special attention in the
earlier literature on the use of kernels in analysis. The section
below outlines properties of RKHSs realized as Hilbert spaces of analytic
functions, and their role in our present applications.

The focus of our analysis below is the case when the RKHS $\mathscr{H}_{K}$
will be Hilbert spaces of analytic function, defined on an open domain
in $\mathbb{C}^{d}$ for some $d$.
\begin{defn}
\label{def:d1}Let $\Omega$ be an open subset in $\mathbb{C}^{d}$
and let $K$ be a $\mathbb{C}$-valued p.d. function on $\Omega\times\Omega$.
We say that $K$ is analytic if the corresponding RKHS $\mathscr{H}_{K}$
consists of analytic functions on $\Omega$.
\end{defn}

\begin{rem}
We note that there are other definitions in the literature which make
precise this property of analyticity, and it follows from our discussion
that they are equivalent to the present one. 

Note that \prettyref{def:d1} makes it clear that the following three
familiar classes of p.d. kernels $K$ are analytic: The cases when
$K$ is a Szeg\H{o} kernel, or a Bergman kernel, or Bargmann\textquoteright s
kernel \cite{MR75656,MR4586941,MR3658810}. In these cases, the respective
RKHSs $\mathscr{H}_{K}$ are the Hardy space $H_{2}\left(\Omega\right)$,
the Bergman space $B_{2}\left(\Omega\right)$, or Bargmann\textquoteright s
Hilbert space of entire analytic functions on $\mathbb{C}^{d}$, also
called the Segal-Bargmann space. For the literature, we refer to \cite{MR4122067,MR3410523,MR2012838,MR4553935,MR4248024,MR3603383},
and we call attention to the Drury-Arveson kernel \cite{MR1668582}
as generalization of the Szego/Bergman case. 
\end{rem}

\begin{example}[$\text{Bergman = \ensuremath{\left(\text{Szeg\H{o}}\right)^{2}}}$]
\label{exa:f3} Recall the Szeg\H{o} kernel 
\[
K\left(z,w\right)=\sum_{n\in\mathbb{N}_{0}}z^{n}\overline{w}^{n}
\]
and the Bergman kernel 
\[
K^{2}\left(z,w\right)=\sum_{n\in\mathbb{N}_{0}}\left(n+1\right)z^{n}\overline{w}^{n}.
\]
Here, we have 
\[
K^{2}-K=K\left(K-1\right)=\left(\frac{1}{1-z\overline{w}}\right)^{2}\left(z\overline{w}\right)\geq0,
\]
i.e., $K\leq K^{2}$. By the discussion above, we have 
\[
B_{2}\left(\mathbb{D}\right)\xrightarrow{\quad A\quad}H_{2}\left(\mathbb{D}\right)\subset B_{2}\left(\mathbb{D}\right),
\]
where $A$ is the operator in \eqref{eq:A1}. Specifically, 
\end{example}

\begin{equation}
A\underset{\in B_{2}}{\underbrace{\left(\sum_{n=0}^{\infty}c_{n}z^{n}\right)}}:=\sum_{n=0}^{\infty}\frac{c_{n}}{\sqrt{n+1}}z^{n}\in H_{2}\label{eq:A1}
\end{equation}
where
\begin{align*}
\left\Vert \sum_{n=0}^{\infty}\frac{c_{n}}{\sqrt{n+1}}z^{n}\right\Vert _{H_{2}}^{2} & =\sum_{n=0}^{\infty}\frac{\left|c_{n}\right|^{2}}{n+1}\\
\left\Vert \sum_{n=0}^{\infty}c_{n}z^{n}\right\Vert _{B_{2}}^{2} & =\left\Vert \sum_{n=0}^{\infty}\frac{c_{n}}{\sqrt{n+1}}\sqrt{n+1}z^{n}\right\Vert _{B_{2}}^{2}=\sum_{n=0}^{\infty}\frac{\left|c_{n}\right|^{2}}{n+1}.
\end{align*}

\begin{rem}
\label{rem:f4}Note this covers a lot of the kernels we considered,
such as
\end{rem}

\begin{enumerate}
\item $K$: 
\[
\begin{matrix}{\displaystyle \frac{1}{1-z\overline{w}}}, & {\displaystyle \left(\frac{1}{1-z\overline{w}}\right)^{2}}, & X=\mathbb{D}\end{matrix}
\]
\item $K:$ 
\[
\frac{1}{2i\left(z-\overline{w}\right)}
\]
defined for $\left(z,w\right)\in\mathbb{C}_{+}\times\mathbb{C}_{+}$,
where 
\[
\mathbb{C}_{+}=\left\{ z\in\mathbb{C}:\Im z>0\right\} 
\]
\item \label{enu:f4-3}The Bargmann kernel: 
\[
e^{z\overline{w}}=\sum_{n=0}^{\infty}\frac{z^{n}\overline{w}^{n}}{n!}=\sum_{n=0}^{\infty}\frac{z^{n}}{\sqrt{n!}}\frac{\overline{w}^{n}}{\sqrt{n!}},\quad\forall\left(z,w\right)\in\mathbb{C}\times\mathbb{C}.
\]
This leads to an RKHS $\mathscr{H}_{K}$ consisting of all entire
functions $F$ on $\mathbb{C}$ with norm 
\begin{equation}
\int_{\mathbb{C}}\left|F\left(z\right)\right|^{2}e^{-\frac{\left|z\right|^{2}}{2}}dA\left(z\right)<\infty.
\end{equation}
Here, $dA\left(z\right)=dxdy$, $z=x+iy$. 
\end{enumerate}
\begin{rem}
Comparing kernels is relatively straightforward, 
\begin{equation}
K\leq K'\Longleftrightarrow\text{\ensuremath{K'-K} is p.d.}
\end{equation}
while comparing the associated Hilbert spaces is intriguing. The challenge
lies in understanding how the embedding or inclusion of one RKHS into
another reflects the geometry of the underlying kernels. For instance,
the inclusion involves not just the kernels' positivity but also their
interaction with the data, operator norms, and potential scaling factors.
Moreover, the relationship between the norms of the two spaces is
critical, as it determines the stability and sensitivity of algorithms
using these spaces. This makes the comparison of RKHSs more than a
direct numerical or functional comparison---it becomes a study of
their geometry, boundedness properties, and the behavior of operators
that map between them. 

For example, 
\begin{equation}
\text{\ensuremath{H_{2}\left(\mathbb{D}\right)} (Hardy space) vs \ensuremath{B_{2}\left(\mathbb{D}\right)} (Bergman space),}
\end{equation}
see the discussion above.
\end{rem}

Now, consider feature maps and feature spaces:
\begin{equation}
H_{S}\left(K\right)=\left\{ \left(\varphi,\mathscr{H}\right),X\ni x\rightarrow\varphi\left(x\right)\in\mathscr{H},\:\text{s.t. \ensuremath{K\left(x,y\right)=\left\langle \varphi\left(x\right),\varphi\left(y\right)\right\rangle _{\mathscr{H}}}}\right\} .
\end{equation}
We may also consider the following two variants of $H_{S}\left(K\right)$:
\begin{defn}
Given $K$, p.d. in $X\times X$, set
\begin{multline}
\text{super feature space:}\\
H_{S}^{+}\left(K\right)=\left\{ \left(\psi,\mathscr{H}\right);X\ni x\rightarrow\psi\left(x\right)\in\mathscr{H},\:\text{s.t. \ensuremath{K\leq\left\langle \psi\left(x\right),\psi\left(y\right)\right\rangle _{\mathscr{H}}}}\right\} ,
\end{multline}
\begin{multline}
\text{sub feature space: }\\
H_{S}^{-}\left(K\right)=\left\{ \left(\psi,\mathscr{H}\right);X\ni x\rightarrow\psi\left(x\right)\in\mathscr{H},\:\text{s.t. \ensuremath{\left\langle \psi\left(x\right),\psi\left(y\right)\right\rangle _{\mathscr{H}}\leq K}}\right\} .
\end{multline}
\end{defn}

\subsection{Three kernels of $H_{2}$-Hardy spaces}

Here, $f_{n}\left(z\right)=z^{n}$, in a complex variable $z\in\mathbb{D}=\left\{ z\in\mathbb{D}:\left|z\right|<1\right\} $,
the unit disk in $\mathbb{C}$. 

Summary: 
\begin{enumerate}
\item Coefficients in the scalar $\mathbb{C}$:
\[
H_{2}\left(\mathbb{D}\right)=\left\{ \sum_{n=0}^{\infty}c_{n}z^{n}:\left(c_{n}\right)\in l^{2}\left(\mathbb{N}_{0}\right)\right\} ,
\]
\[
\left\Vert \sum_{n=0}^{\infty}c_{n}z^{n}\right\Vert _{H_{2}\left(\mathbb{D}\right)}^{2}=\sum_{n=0}^{\infty}\left|c_{n}\right|^{2}=\left\Vert \left(c_{n}\right)\right\Vert _{l^{2}}^{2}.
\]
\item Coefficients in a fixed Hilbert space $\mathscr{H}$:
\[
H_{2}\left(\mathscr{H}\right):=\left\{ \sum_{n=0}^{\infty}h_{n}z^{n}:h_{n}\in\mathscr{H},\:\sum_{n=0}^{\infty}\left\Vert h_{n}\right\Vert ^{2}<\infty\right\} ,
\]
\[
\left\Vert \sum_{n=0}^{\infty}h_{n}z^{n}\right\Vert _{H_{2}\left(\mathscr{H}\right)}^{2}=\sum_{n=0}^{\infty}\left\Vert h_{n}\right\Vert _{\mathscr{H}}^{2}<\infty.
\]
\item \label{enu:f3}Coefficients in the Hilbert-Schmidt class $HS\left(\mathscr{H}\right)\subset B\left(\mathscr{H}\right)$,
where $B\left(\mathscr{H}\right)$ is the space of all bounded operators
in $\mathscr{H}$: 
\begin{multline*}
H_{2}\left(B\left(\mathscr{H}\right)\right):=\left\{ \sum_{n=0}^{\infty}Q_{n}z^{n}:Q_{n}\in B\left(\mathscr{H}\right),\right.\\
\;\left.\sum_{n=0}^{\infty}Q_{n}^{*}Q_{n}\in\mathscr{T}\left(\mathscr{H}\right),\:\text{trace class}\right\} ,
\end{multline*}
\[
\left\Vert \sum_{n=0}^{\infty}Q_{n}z^{n}\right\Vert _{H_{2}\left(B\left(\mathscr{H}\right)\right)}^{2}=\text{Trace}\left(\sum_{n=0}^{\infty}Q_{n}^{*}Q_{n}\right).
\]
\end{enumerate}
Correspondences, transforms: $3\rightarrow2\rightarrow1$, $3\rightarrow1$.
Recall a Kaczmarz system of projections $P_{n}$ yields operators
$Q_{n}$ s.t. $\sum_{n}Q_{n}^{*}Q_{n}=I$. See e.g., \cite{MR4561157,MR4472249,MR4126821}
for additional details.

\subsection{Realization using tensor product of Hilbert spaces}

Recall that 
\begin{equation}
{\color{blue}\mathscr{H}\otimes\mathscr{H}^{*}}\longleftrightarrow\text{the Hilbert space of all \emph{Hilbert-Schmidt} operators acting on \ensuremath{\mathscr{H}.}}
\end{equation}
Consider case \eqref{enu:f3} from above, i.e., $F\left(z\right)=\sum_{n=0}^{\infty}Q_{n}z^{n}$,
then for $h\in\mathscr{H}$, 
\[
\left\langle h,F\left(z\right)h\right\rangle _{\mathscr{H}}=\sum_{n=0}^{\infty}\left\langle h,Q_{n}h\right\rangle _{\mathscr{H}}z^{n}\in H_{2}\left(\mathbb{D}\right),
\]
where $\left\langle \cdot,\cdot\right\rangle _{\mathscr{H}}$ denotes
the inner product in $\mathscr{H}$, and 
\begin{align*}
\left\Vert F\left(z\right)h\right\Vert _{\mathscr{H}}^{2} & =\left\Vert \sum_{n=0}^{\infty}\left(Q_{n}h\right)z^{n}\right\Vert _{H_{2}\left(\mathbb{D},\mathscr{H}\right)}^{2}\\
 & =\sum_{n=0}^{\infty}\left\Vert Q_{h}h\right\Vert _{\mathscr{H}}^{2}\left|z\right|^{2n}\\
 & =\sum_{n=0}^{\infty}\left\langle h,Q_{n}^{*}Q_{n}h\right\rangle _{\mathscr{H}}\left|z\right|^{2n}\\
 & \leq\left\Vert h\right\Vert _{\mathscr{H}}^{2}\sum_{n=0}^{\infty}\left\Vert Q_{n}^{*}Q_{n}\right\Vert \left|z\right|^{2n}\\
 & =\left\Vert h\right\Vert _{\mathscr{H}}^{2}\sum_{n=0}^{\infty}\left\Vert Q_{n}\right\Vert ^{2}\left|z\right|^{2n}
\end{align*}
where $\left\Vert \cdot\right\Vert =\left\Vert \cdot\right\Vert _{\mathscr{H}\rightarrow\mathscr{H}}$
is the operator norm. 

Trace-norm: Pick an ONB $\left\{ e_{k}\right\} $ in $\mathscr{H}$,
then 
\begin{align*}
\text{Tr}\left(F\left(z\right)^{*}F\left(z\right)\right) & =\sum_{k}\left\langle e_{k},F\left(z\right)^{*}F\left(z\right){\color{blue}e_{k}}\right\rangle \\
 & =\sum_{k}\left\Vert F\left(z\right)e_{k}\right\Vert _{\mathscr{H}}^{2}\\
 & =\sum_{n=0}^{\infty}\text{Tr}\left(Q_{n}^{*}Q_{n}\right)\left|z\right|^{2n}<\infty
\end{align*}
for $\forall z\in\mathbb{D}$ when $\sum Q_{n}^{*}Q_{n}$ is trace-class. 

Of special significance to the above discussion are the following
citations \cite{MR2012838, MR3410523, MR1668582, MR4472249, MR24487, MR3526117}.

\section{\label{sec:G}$K$-duality via the RKHS $\mathscr{H}_{K}$}

The present final section addresses a list of direct links between
kernel properties, and the role they play in feature selection. 

\subsection{Dirac-masses and $\mathscr{H}_{K}$}

In the below we consider the role of the Dirac masses in reproducing
kernel Hilbert space $\mathscr{H}_{K}$ when $K$ is a general p.d.
kernel defined on $X\times X$. Specifically, we show that the completion
of the span of the $X$-Dirac masses identifies as a realization of
all bounded linear functionals on $\mathscr{H}_{K}$.
\begin{thm}
\label{thm:7-1}Fix $K$, assumed p.d. on $X$. Let 
\begin{equation}
\mathscr{H}_{K}=\overline{span}^{\left\Vert \cdot\right\Vert _{\mathscr{H}_{K}}}\left\{ K_{x}:x\in X\right\} ,\quad\tilde{\mathscr{H}}_{K}=\overline{span}^{\left\Vert \cdot\right\Vert _{\tilde{\mathscr{H}}_{K}}}\left\{ \delta_{x}:x\in X\right\} ,
\end{equation}
where 
\begin{equation}
\left\Vert \sum\nolimits_{i}c_{i}K_{x_{i}}\right\Vert _{\mathscr{H}_{K}}^{2}=\left\Vert \sum\nolimits_{i}c_{i}\delta_{x_{i}}\right\Vert _{\tilde{\mathscr{H}}_{K}}^{2}=\sum\nolimits_{i,j}\overline{c_{i}}c_{j}K\left(x_{i},x_{j}\right).\label{eq:g2-1}
\end{equation}
Then
\begin{equation}
\tilde{\mathscr{H}}_{K}\simeq\mathscr{H}_{K}'.\label{eq:g3}
\end{equation}
\end{thm}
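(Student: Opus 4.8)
The plan is to factor the desired identification through the RKHS $\mathscr{H}_{K}$ itself, composing two isometric isomorphisms: the tautological one supplied by \eqref{eq:g2-1}, and the Riesz duality map. The guiding idea is that $\tilde{\mathscr{H}}_{K}$ and $\mathscr{H}_{K}$ are built from literally the same Gram data $\sum_{i,j}\overline{c_{i}}c_{j}K\left(x_{i},x_{j}\right)$, so they are canonically unitarily equivalent, while $\mathscr{H}_{K}'$ is canonically equivalent to $\mathscr{H}_{K}$ by Riesz; chaining these sends $\delta_{x}$ to the evaluation functional at $x$, which is the realization asserted in the paragraph preceding the theorem.

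First I would read \eqref{eq:g2-1} as the statement that the assignment $U\colon K_{x}\mapsto\delta_{x}$ preserves the common quadratic form on the dense domains $span\left\{ K_{x}\right\}$ and $span\left\{ \delta_{x}\right\}$. A formal combination is null in $\mathscr{H}_{K}$ exactly when its form value vanishes, which happens exactly when it is null in $\tilde{\mathscr{H}}_{K}$; hence $U$ is well defined on equivalence classes and isometric. Since its domain is dense in $\mathscr{H}_{K}$ and its range is dense in $\tilde{\mathscr{H}}_{K}$, it extends to a linear unitary $U\colon\mathscr{H}_{K}\to\tilde{\mathscr{H}}_{K}$.

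Second I would invoke the Riesz representation theorem in the form $R\colon\mathscr{H}_{K}\to\mathscr{H}_{K}'$, $v\mapsto\left\langle v,\cdot\right\rangle _{\mathscr{H}_{K}}$, an isometric bijection. By the reproducing property of \prettyref{def:b1}, $R\left(K_{x}\right)$ is the evaluation functional $ev_{x}\colon f\mapsto f\left(x\right)=\left\langle K_{x},f\right\rangle _{\mathscr{H}_{K}}$, which is bounded with $\left\Vert ev_{x}\right\Vert _{\mathscr{H}_{K}'}=\sqrt{K\left(x,x\right)}$. The composite $\Phi:=R\circ U^{-1}\colon\tilde{\mathscr{H}}_{K}\to\mathscr{H}_{K}'$ then sends $\delta_{x}\mapsto ev_{x}$ and, as a composition of two isometric bijections, is itself an isometric bijection, giving \eqref{eq:g3}. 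Surjectivity is automatic here: density of $\left\{ K_{x}\right\}$ in $\mathscr{H}_{K}$ together with the isometry of $R$ forces $\left\{ ev_{x}\right\}$ to be dense in $\mathscr{H}_{K}'$, and an isometry with dense range is onto.

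The only real subtlety, which I would flag explicitly, is the interaction of the two conjugations. Because Riesz duality is conjugate-linear, the natural map $\delta_{x}\mapsto ev_{x}$ is anti-linear, not linear. Indeed, a naive linear extension would attach to $\sum c_{i}\delta_{x_{i}}$ the functional $f\mapsto\sum_{i}c_{i}f\left(x_{i}\right)$, whose Riesz representative is $\sum_{i}\overline{c_{i}}K_{x_{i}}$ with squared norm $\sum_{i,j}c_{i}\overline{c_{j}}K\left(x_{i},x_{j}\right)$, and this generally differs from $\left\Vert \sum c_{i}\delta_{x_{i}}\right\Vert _{\tilde{\mathscr{H}}_{K}}^{2}=\sum_{i,j}\overline{c_{i}}c_{j}K\left(x_{i},x_{j}\right)$ once $K$ is not real-symmetric. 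Conjugating the coefficients, i.e.\ routing through $R\circ U^{-1}$ so that $\sum c_{i}\delta_{x_{i}}$ represents $f\mapsto\sum_{i}\overline{c_{i}}f\left(x_{i}\right)$, is precisely what restores the isometry, after which the verification is a direct reading of \eqref{eq:g2-1}. Accordingly I would state \eqref{eq:g3} as an isometric isomorphism of Hilbert spaces in the (anti-unitary) Riesz sense.
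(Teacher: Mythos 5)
Your proof is correct and takes essentially the same route as the paper's: both rest on the Riesz representation theorem combined with the tautological isometry $K_{x}\mapsto\delta_{x}$ induced by the shared Gram form \eqref{eq:g2-1}; you package this as the composition $R\circ U^{-1}$ of two explicit isometric bijections, whereas the paper runs the same identification through Cauchy sequences in $span\left\{ K_{x}\right\}$. Your remark on conjugate-linearity is a genuine refinement rather than a pedantic aside: under the paper's convention one has $\left\langle \sum_{i}c_{i}K_{x_{i}},h\right\rangle _{\mathscr{H}_{K}}=\sum_{i}\overline{c_{i}}h\left(x_{i}\right)$, so the displayed identity $\tilde{\varphi}_{n}\left(h\right)=\sum c_{i}^{\left(n\right)}h\left(x_{i}\right)$ in the paper's proof silently drops a conjugate, and the identification \eqref{eq:g3} is most naturally read as anti-unitary (equivalently, as a linear unitary onto the conjugate dual), exactly as you flag.
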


\begin{proof}
Every bounded linear functional $l$ on $\mathscr{H}_{K}$ is given
by a unique $\xi_{l}$ in $\mathscr{H}_{K}$, and $\xi_{l}$ is the
limit of a sequence $\left(\varphi_{n}\right)$ in $span\left\{ K_{x}:x\in X\right\} $.
Using the correspondence 
\[
\varphi_{n}\left(x\right)=\sum c_{i}^{\left(n\right)}K_{x_{i}}\xleftrightarrow{\text{by \ensuremath{\left(\ref{eq:g2-1}\right)}}}\tilde{\varphi}_{n}\left(x\right)=\sum c_{i}^{\left(n\right)}\delta_{x_{i}},
\]
$\left(\tilde{\varphi}_{n}\right)$ is Cauchy in $\tilde{\mathscr{H}}_{K}$,
and it converges to some $f\in\tilde{\mathscr{H}}_{K}$. Note, by
\eqref{eq:g2-1}, 
\[
\left\Vert \varphi_{n}-\xi_{l}\right\Vert _{\mathscr{H}_{K}}=\left\Vert \tilde{\varphi}_{n}-f\right\Vert _{\tilde{\mathscr{H}}_{K}}.
\]

Then, for all $h\in\mathscr{H}_{K}$, 
\[
l\left(h\right)=\left\langle \xi_{l},h\right\rangle _{\mathscr{H}_{K}}=\lim_{n}\left\langle \varphi_{n},h\right\rangle _{\mathscr{H}_{K}}=\lim_{n}\tilde{\varphi}_{n}\left(h\right)=f\left(h\right),
\]
where 
\[
\tilde{\varphi}_{n}\left(h\right)=\left(\sum c_{i}^{\left(n\right)}\delta_{x_{i}}\right)\left(h\right)=\left\langle \left(\sum c_{i}^{\left(n\right)}K_{x_{i}}\right),h\right\rangle _{\mathscr{H}_{K}}=\sum c_{i}^{\left(n\right)}h\left(x_{i}\right).
\]
Therefore, 
\[
l=f.
\]
This shows that $\mathscr{H}_{K}'\subset\tilde{\mathscr{H}}_{K}$.
Similarly, $\tilde{\mathscr{H}}_{K}\subset\mathscr{H}_{K}'$, and
so \eqref{eq:g3} holds. 
\end{proof}
We note that \prettyref{thm:7-1} follows from the Riesz Representation
Theorem. However, we include this theorem to explicitly establish
the equivalence between $\mathscr{H}_{K}'$, the dual space of $\mathscr{H}_{K}$,
and $\tilde{\mathscr{H}}_{K}$, the completion of the span of Dirac
masses. The point is to explicitly show how the RKHS structure and
kernel properties are used for this identification (see \eqref{eq:g3}).
In \prettyref{cor:g3} below, we use this to construct explicit bases
for $\mathscr{H}_{K}$ and $\mathscr{H}_{K}'$ for the kernel $K^{n}\left(x,y\right):=\left(1-xy\right)^{-n}$,
where the representation of the Dirac delta function gives a concrete
realization of $\tilde{\mathscr{H}}_{K}$. 
\begin{cor}
Fix $K,L$ p.d. on $X$. The following are equivalent:
\begin{enumerate}
\item $K\leq L$.
\item $\mathscr{H}_{K}$ is contractively contained in $\mathscr{H}_{L}$.
\item $\tilde{\mathscr{H}}_{L}$ is contractively contained in $\tilde{\mathscr{H}}_{K}$.
\end{enumerate}
Moreover, $\mathscr{H}_{K}$ is dense in $\mathscr{H}_{L}$ if and
only if $\tilde{\mathscr{H}}_{L}$ is dense in $\tilde{\mathscr{H}}_{K}$. 
\end{cor}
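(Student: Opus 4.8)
The plan is to establish the chain of equivalences by combining Aronszajn's inclusion theorem (already recalled just before \prettyref{lem:f1}) with the duality identification $\tilde{\mathscr{H}}_{K}\simeq\mathscr{H}_{K}'$ from \prettyref{thm:7-1}. The equivalence of (1) and (2) is precisely Aronszajn's theorem, so the new content is the equivalence of (2) and (3), together with the final density statement. First I would observe that \eqref{eq:g2-1} gives a canonical conjugate-linear (or linear, depending on convention) isometry $J_{K}:\mathscr{H}_{K}\to\tilde{\mathscr{H}}_{K}$ sending $K_{x}\mapsto\delta_{x}$, and likewise $J_{L}:\mathscr{H}_{L}\to\tilde{\mathscr{H}}_{L}$; these intertwine the two pictures. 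Under the identification $\tilde{\mathscr{H}}_{K}\simeq\mathscr{H}_{K}'$, the inclusion $\tilde{\mathscr{H}}_{L}\hookrightarrow\tilde{\mathscr{H}}_{K}$ should correspond exactly to the dual (adjoint) of the inclusion $\mathscr{H}_{K}\hookrightarrow\mathscr{H}_{L}$, which is the standard fact that a contractive inclusion of Hilbert spaces dualizes to a contractive inclusion of the duals in the reverse direction.

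Concretely, I would argue as follows. Assume (2), so by \prettyref{cor:f3} the map $K_{x}\mapsto A^{1/2}L_{x}$ extends to a contraction $\iota:\mathscr{H}_{K}\to\mathscr{H}_{L}$ (an isometry onto its range when $K\le L$ with constant $1$; in general a contraction). Its Hilbert-space adjoint $\iota^{*}:\mathscr{H}_{L}\to\mathscr{H}_{K}$ is then also a contraction. The key computation is that $\iota^{*}$, transported through the isometries $J_{K},J_{L}$ of \eqref{eq:g2-1}, realizes the inclusion $\tilde{\mathscr{H}}_{L}\to\tilde{\mathscr{H}}_{K}$ at the level of Dirac masses: one checks on the spanning vectors that $\delta_{x}^{L}\mapsto\delta_{x}^{K}$ is well defined and contractive precisely because, for a finite combination,
\[
\Bigl\Vert\sum\nolimits_{i}c_{i}\delta_{x_{i}}\Bigr\Vert_{\tilde{\mathscr{H}}_{K}}^{2}=\sum\nolimits_{i,j}\overline{c_{i}}c_{j}K\left(x_{i},x_{j}\right)\le\sum\nolimits_{i,j}\overline{c_{i}}c_{j}L\left(x_{i},x_{j}\right)=\Bigl\Vert\sum\nolimits_{i}c_{i}\delta_{x_{i}}\Bigr\Vert_{\tilde{\mathscr{H}}_{L}}^{2},
\]
which is exactly the inequality $K\le L$ read through \eqref{eq:g2-1}. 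This simultaneously gives (1)$\Rightarrow$(3) directly and, by reversing the reading, (3)$\Rightarrow$(1), closing the cycle without even needing to pass through (2). This direct route via the norm identity \eqref{eq:g2-1} is in fact the cleanest argument, since it makes the order-reversal transparent: enlarging the kernel enlarges the $\delta$-norm, hence shrinks the $\delta$-space.

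For the final assertion, I would use that density of the inclusion $\mathscr{H}_{K}\hookrightarrow\mathscr{H}_{L}$ is equivalent to injectivity of the adjoint inclusion $\tilde{\mathscr{H}}_{L}\hookrightarrow\tilde{\mathscr{H}}_{K}$ having dense range, i.e. the standard Hilbert-space duality that a bounded operator has dense range if and only if its adjoint is injective. Since the map $\tilde{\mathscr{H}}_{L}\to\tilde{\mathscr{H}}_{K}$ is already an injective contraction (it is the adjoint of the dense inclusion), I would instead phrase it symmetrically: $\mathscr{H}_{K}$ is dense in $\mathscr{H}_{L}$ iff the transpose map has trivial kernel on the $\delta$-side, which unwinds to density of $\tilde{\mathscr{H}}_{L}$ in $\tilde{\mathscr{H}}_{K}$. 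The main obstacle I anticipate is bookkeeping the conjugate-linearity in the $\delta$-to-$K$ correspondence and making sure the adjoint/transpose is taken in the correct category so that the two contractive inclusions genuinely point in opposite directions; once \eqref{eq:g2-1} is invoked as the bridging isometry, the rest is routine and the direct norm-comparison argument sidesteps most of the subtlety.
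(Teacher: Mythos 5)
The paper states this corollary without proof, so there is no in-text argument to compare against; it is evidently meant as a direct consequence of Aronszajn's inclusion theorem and the norm identity \eqref{eq:g2-1}. Your handling of the equivalences (1)$\Leftrightarrow$(2)$\Leftrightarrow$(3) is correct and is surely the intended route: (1)$\Leftrightarrow$(2) is Aronszajn's theorem with constant $c=1$, and the computation
\[
\Bigl\Vert\sum\nolimits_{i}c_{i}\delta_{x_{i}}\Bigr\Vert_{\tilde{\mathscr{H}}_{K}}^{2}=\sum\nolimits_{i,j}\overline{c_{i}}c_{j}K\left(x_{i},x_{j}\right)\le\sum\nolimits_{i,j}\overline{c_{i}}c_{j}L\left(x_{i},x_{j}\right)=\Bigl\Vert\sum\nolimits_{i}c_{i}\delta_{x_{i}}\Bigr\Vert_{\tilde{\mathscr{H}}_{L}}^{2}
\]
on the common span of Dirac masses gives (1)$\Leftrightarrow$(3) directly; the detour through $A^{1/2}$ and \prettyref{cor:f3} is unnecessary. (One point you only wave at: for the induced map on completions to be a genuine \emph{containment} it must be injective; this same issue resurfaces, fatally, below.)

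The density assertion is where there is a genuine gap, and in fact the statement as literally written cannot be proved because it fails under the natural reading. The contraction $j:\tilde{\mathscr{H}}_{L}\to\tilde{\mathscr{H}}_{K}$, $\delta_{x}\mapsto\delta_{x}$, \emph{always} has dense range: its range contains $span\left\{ \delta_{x}:x\in X\right\}$, which is dense in $\tilde{\mathscr{H}}_{K}$ by the very definition of $\tilde{\mathscr{H}}_{K}$ as the completion of that span. So ``$\tilde{\mathscr{H}}_{L}$ dense in $\tilde{\mathscr{H}}_{K}$'' is automatic, whereas ``$\mathscr{H}_{K}$ dense in $\mathscr{H}_{L}$'' is not: take $K\left(x,y\right)=\overline{f\left(x\right)}f\left(y\right)$ for a single $f\in\mathscr{H}_{L}$ with $\left\Vert f\right\Vert_{\mathscr{H}_{L}}\le1$, so that $K\le L$ but $\mathscr{H}_{K}=\mathbb{C}f$ is one-dimensional. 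Your closing step --- ``$\mathscr{H}_{K}$ is dense in $\mathscr{H}_{L}$ iff the transpose map has trivial kernel on the $\delta$-side, which unwinds to density of $\tilde{\mathscr{H}}_{L}$ in $\tilde{\mathscr{H}}_{K}$'' --- silently converts \emph{injectivity} of $j$ into \emph{density of the range} of $j$; these are dual conditions, not the same condition. The correct dual statement, which your own invocation of ``dense range iff adjoint injective'' actually delivers, is: $\mathscr{H}_{K}$ is dense in $\mathscr{H}_{L}$ if and only if $j$ is \emph{injective}, i.e., $\tilde{\mathscr{H}}_{L}$ embeds (rather than merely maps) into $\tilde{\mathscr{H}}_{K}$. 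Indeed, under $\tilde{\mathscr{H}}_{K}\simeq\mathscr{H}_{K}$ via $\delta_{x}\mapsto K_{x}$ and $\tilde{\mathscr{H}}_{L}\simeq\mathscr{H}_{L}$ via $\delta_{x}\mapsto L_{x}$ (\prettyref{thm:7-1} together with Riesz), $j$ becomes the adjoint $\iota^{*}:\mathscr{H}_{L}\to\mathscr{H}_{K}$, $L_{x}\mapsto K_{x}$, of the inclusion $\iota:\mathscr{H}_{K}\to\mathscr{H}_{L}$, and $\overline{\mathrm{ran}\,\iota}=\mathscr{H}_{L}$ iff $\ker\iota^{*}=0$. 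You should either prove that reformulation or note explicitly that the ``Moreover'' clause needs to be restated in terms of injectivity.
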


\begin{cor}
\label{cor:g3}Let $X=\left(-1,1\right)$, and 
\[
K^{n}\left(x,y\right)=\left(1-xy\right)^{-n}=\sum_{k=0}^{\infty}a_{k}x^{k}y^{k},\quad x,y\in X
\]
where 
\[
a_{k}=\left(-1\right)^{k}\binom{-n}{k}=\frac{n\left(n+1\right)\cdots\left(n+k-1\right)}{k}>0.
\]
Then 
\begin{align}
\mathscr{H}_{K^{n}} & =\left\{ \sum c_{k}x^{k}:\left(c_{k}/\sqrt{a_{k}}\right)\in l^{2}\right\} \simeq\left\{ \left(c_{k}\right):\left(c_{k}/\sqrt{a_{k}}\right)\in l^{2}\right\} \\
\mathscr{H}'_{K^{n}} & =\left\{ \sum c_{k}\frac{D^{k}}{k!}:\left(c_{k}\sqrt{a_{k}}\right)\in l^{2}\right\} \simeq\left\{ \left(c_{k}\right):\left(c_{k}\sqrt{a_{k}}\right)\in l^{2}\right\} 
\end{align}
where 
\[
D^{k}=\left(\frac{d}{dy}\right)^{k}\big|_{y=0}.
\]
Further, for all $x\in X$, 
\begin{equation}
\delta_{x}=\sum_{k=0}^{\infty}x^{k}\frac{D^{k}}{k!}.\label{eq:g4}
\end{equation}
\end{cor}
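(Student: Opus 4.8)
The plan is to build everything on the monomial expansion of $K^{n}$, using \prettyref{lem:b3} for the primal space $\mathscr{H}_{K^{n}}$ and \prettyref{thm:7-1} for the dual space $\mathscr{H}'_{K^{n}}$. First I would treat $\mathscr{H}_{K^{n}}$. Since all quantities are real on $J=(-1,1)$, we may write $K^{n}(x,y)=\sum_{k}a_{k}x^{k}y^{k}=\sum_{k}\overline{(\sqrt{a_{k}}\,x^{k})}(\sqrt{a_{k}}\,y^{k})$, so \prettyref{lem:b3} shows that $\{\sqrt{a_{k}}\,x^{k}\}_{k\geq 0}$ is a Parseval frame for $\mathscr{H}_{K^{n}}$; as these functions are distinct, the second part of \prettyref{lem:b3} upgrades this to an ONB. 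Writing a general element as $F=\sum_{k}b_{k}(\sqrt{a_{k}}\,x^{k})$ with $(b_{k})\in l^{2}$ and substituting $c_{k}=b_{k}\sqrt{a_{k}}$ gives $F(x)=\sum_{k}c_{k}x^{k}$ with $\|F\|_{\mathscr{H}_{K^{n}}}^{2}=\sum_{k}|c_{k}|^{2}/a_{k}$, which is exactly the stated condition $(c_{k}/\sqrt{a_{k}})\in l^{2}$. Because $a_{k}$ grows only polynomially in $k$, these power series have radius of convergence at least $1$, so every $F\in\mathscr{H}_{K^{n}}$ is analytic on $J$ and $c_{k}=F^{(k)}(0)/k!$.

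Next I would identify the dual functionals. The functional $\tfrac{D^{k}}{k!}\colon F\mapsto \tfrac{1}{k!}F^{(k)}(0)=c_{k}$ extracts the $k$-th Taylor coefficient. A single-term Cauchy--Schwarz bound, $|c_{k}|=\sqrt{a_{k}}\,|c_{k}/\sqrt{a_{k}}|\le\sqrt{a_{k}}\,\|F\|_{\mathscr{H}_{K^{n}}}$, with equality at $F=x^{k}$, shows $\tfrac{D^{k}}{k!}$ is bounded with operator norm exactly $\sqrt{a_{k}}$. Computing the Riesz representative of $\tfrac{D^{k}}{k!}$ against the ONB $\{\sqrt{a_{j}}\,x^{j}\}$ then shows that distinct functionals $\tfrac{D^{j}}{j!},\tfrac{D^{k}}{k!}$ are orthogonal in $\mathscr{H}'_{K^{n}}$ and that $\|\tfrac{D^{k}}{k!}\|_{\mathscr{H}'_{K^{n}}}^{2}=a_{k}$. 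Hence $\{\tfrac{D^{k}}{k!}\}_{k}$ is an orthogonal total family in $\mathscr{H}'_{K^{n}}$ (totality because these functionals separate points of $\mathscr{H}_{K^{n}}$), so a general element is $\sum_{k}c_{k}\tfrac{D^{k}}{k!}$ with norm-square $\sum_{k}|c_{k}|^{2}a_{k}$, i.e. $(c_{k}\sqrt{a_{k}})\in l^{2}$. This is the asserted description of $\mathscr{H}'_{K^{n}}$, and via \prettyref{thm:7-1} it is simultaneously the concrete realization of $\tilde{\mathscr{H}}_{K^{n}}$.

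Finally, for the Dirac expansion \eqref{eq:g4}, I would verify the identity first as functionals and then check convergence in norm. For any $F=\sum_{k}c_{k}x^{k}\in\mathscr{H}_{K^{n}}$ we have $\bigl(\sum_{k}x^{k}\tfrac{D^{k}}{k!}\bigr)(F)=\sum_{k}x^{k}c_{k}=F(x)=\delta_{x}(F)$, so the two sides agree pointwise on $\mathscr{H}_{K^{n}}$. Convergence of the series in $\mathscr{H}'_{K^{n}}$ is immediate from the orthogonality established above: the partial sums have squared norm $\sum_{k}|x^{k}|^{2}a_{k}=\sum_{k}a_{k}x^{2k}=(1-x^{2})^{-n}=K^{n}(x,x)<\infty$ for $|x|<1$, which also matches the value $\|\delta_{x}\|_{\tilde{\mathscr{H}}_{K^{n}}}^{2}=K^{n}(x,x)$ forced by \eqref{eq:g2-1}.

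I expect the main obstacle to be the middle step: rigorously justifying that $\tfrac{D^{k}}{k!}$ is a genuine bounded functional on $\mathscr{H}_{K^{n}}$ (which relies on the analyticity and the coefficient-extraction just established) and correctly transporting the Hilbert inner product through the Riesz duality of \prettyref{thm:7-1}, so that the normalization $\|\tfrac{D^{k}}{k!}\|^{2}=a_{k}$ and the orthogonality emerge with the right scaling. Once this normalization is pinned down, both the primal ONB computation and the formal Dirac expansion are routine.
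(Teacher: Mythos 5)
Your proposal is correct and follows essentially the same route as the paper's proof: identify $\{\sqrt{a_k}\,x^k\}$ as the ONB of $\mathscr{H}_{K^n}$, compute the Riesz representative of $\tfrac{D^k}{k!}$ to be $a_k x^k$ (hence $\{\tfrac{D^k}{k!\sqrt{a_k}}\}$ is an ONB of the dual), and expand $\delta_x$ in that ONB. The only difference is that you fill in details the paper leaves implicit (boundedness of the coefficient functionals, totality, and the norm convergence $\sum_k a_k x^{2k}=K^n(x,x)<\infty$), which is a welcome but not substantively different elaboration.
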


\begin{proof}
Note that
\[
\frac{D^{k}}{k!}\left(x^{m}\right)=\begin{cases}
1 & k=m\\
0 & k\neq m
\end{cases}
\]
and so we have the natural isomorphism 
\[
\mathscr{H}'_{K^{n}}\ni\frac{D^{k}}{k!}\longleftrightarrow a_{k}x^{k}\in\mathscr{H}_{K^{n}}.
\]
Thus, 
\[
\left\{ \frac{D^{k}}{k!\sqrt{a_{k}}}\right\} _{k=0}^{\infty}
\]
is an ONB for $\mathscr{H}'_{K^{n}}$. (See also \prettyref{cor:d3}.) 

Lastly, with $\delta_{x}\longleftrightarrow K_{x}$, then 
\[
\left\langle \frac{D^{k}}{k!\sqrt{a_{k}}},\delta_{x}\right\rangle _{\mathscr{H}'_{K^{n}}}=\left\langle \sqrt{a_{k}}x^{k},K_{x}\right\rangle _{\mathscr{H}_{K^{n}}}=\sqrt{a_{k}}x^{k},
\]
and thus 
\[
\delta_{x}=\sum_{k=0}^{\infty}\left\langle \frac{D^{k}}{k!\sqrt{a_{k}}},\delta_{x}\right\rangle _{\mathscr{H}'_{K^{n}}}\frac{D^{k}}{k!\sqrt{a_{k}}}=\sum_{k=0}^{\infty}\sqrt{a_{k}}x^{k}\frac{D^{k}}{k!\sqrt{a_{k}}}=\sum_{k=0}^{\infty}x^{k}\frac{D^{k}}{k!},
\]
which is \eqref{eq:g4}.
\end{proof}

\subsection{A $K$-transform and $K^{-1}$}

The following result is motivated by the special case when p.d. kernels
arise as Greens functions. In more detail, recall from the context
of PDEs, Greens functions arise as \textquotedblleft inverse\textquotedblright{}
to positive elliptic operators, see e.g., \cite{MR91442,MR4294330,MR1698952,MR1642132}.
Since, for these cases, therefore p.d. kernels $K$ arise as inverses
of elliptic PDEs, it seems natural, in our present general framework
of $Pos(X)$, to ask for a precise form of $K^{-1}$ .

Starting with $K\in Pos\left(X\right)$, and introduce functions $f\in\mathscr{H}_{K}$
(the RKHS), and signed measures $\mu$ on $X$ s.t. $\mu K\mu<\infty$. 

Recall that the following are equivalent:
\begin{gather}
\infty>\iint\mu\left(dx\right)K\left(x,y\right)\mu\left(dy\right)=\left(\text{abbreviated \ensuremath{\mu K\mu}}\right)\label{eq:fu1}\\
\Updownarrow\nonumber \\
\int\mu\left(dx\right)K\left(x,\cdot\right)\in\mathscr{H}_{K}\label{eq:fu2}
\end{gather}
So we get a pre-Hilbert space 
\[
\mathscr{M}_{2}\left(K\right):=\left\{ \text{signed measures \ensuremath{\mu} s.t. \ensuremath{\mu K\mu<\infty}}\right\} ,
\]
and 
\begin{equation}
T_{K}:\mathscr{M}_{2}\left(K\right)\longrightarrow\mathscr{H}_{K},
\end{equation}
where 
\begin{alignat}{3}
\mathscr{M}_{2}\left(K\right)\ni\mu & \quad & \xrightarrow{\quad T_{K}\quad} & \quad & T_{K}\mu\in\mathscr{H}_{K}\\
\mathscr{M}_{2}\left(K\right)\ni K^{-1}f &  & \xleftarrow[\quad T_{K}^{*}\quad]{} &  & f\in\mathscr{H}_{K}
\end{alignat}
So we have a well defined operator $T_{K}^{*}$, and we can use it
to make precise $K^{-1}$. So $K^{-1}$ gets a precise definition
via $T_{K}^{*}$. 

Recall we proved \eqref{eq:fu1}$\Longleftrightarrow$\eqref{eq:fu2},
\begin{equation}
\left\Vert T_{K}\mu\right\Vert _{\mathscr{H}_{K}}^{2}=\mu K\mu=\left\Vert \mu\right\Vert _{\mathscr{M}_{2}\left(K\right)}^{2}.
\end{equation}
So introduce 
\[
\left\langle \nu,\mu\right\rangle _{\mathscr{M}_{2}\left(K\right)}=\nu K\mu
\]
we have that 
\[
\mathscr{M}_{2}\left(K\right)\xrightarrow{\quad T_{K}\quad}m_{K}
\]
is isometric. 
\begin{prop}
Fix $K\in Pos\left(X\right)$. We have:
\[
\xymatrix{\mu\in\underset{\begin{matrix}\text{signed}\\
\text{measures}\\
\mu,\mu K\mu<\infty
\end{matrix}}{\underbrace{\mathscr{M}_{2}\left(K\right)}}\ar@/^{1.3pc}/[rr]^{T_{K}} &  & \underset{\text{RKHS}}{\mathscr{H}_{K}}\ar@/^{1.3pc}/[ll]^{T_{K}^{*}}}
\]
\begin{align}
T_{K}\mu & =\int\mu\left(dx\right)K\left(x,\cdot\right),\\
T_{K}^{*}f & =K^{-1}f,\label{eq:nm2}
\end{align}
where $K^{-1}$ is a Penrose-inverse (see e.g., \cite{MR4815055})
to $K$ where $K$ is interpreted as a kernel operator. 
\end{prop}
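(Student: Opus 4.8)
The plan is to start from the isometry of $T_K$, which the discussion preceding the statement already supplies through $\|T_K\mu\|_{\mathscr{H}_K}^2 = \mu K\mu = \|\mu\|_{\mathscr{M}_2(K)}^2$, and then to pin down $T_K^*$ by a direct adjoint computation. First I would observe that $T_K\mu = \int\mu(dx)\,K_x$ hits every finite combination $\sum_i c_i K_{x_i}$ by taking $\mu = \sum_i c_i\delta_{x_i}$; since such combinations are dense in $\mathscr{H}_K$, the isometry $T_K$ has dense range $m_K$, whence $T_K^*T_K = I$ on $\mathscr{M}_2(K)$ and $T_K^*$ inverts $T_K$ on $m_K$.

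The central step is the adjoint identity. For $f\in\mathscr{H}_K$ and $\mu\in\mathscr{M}_2(K)$ I would compute, using the reproducing property $\langle f,K_y\rangle_{\mathscr{H}_K} = \overline{f(y)}$ after pulling the vector-valued integral out of the inner product,
\begin{equation}
\langle f, T_K\mu\rangle_{\mathscr{H}_K} = \Big\langle f,\ \int\mu(dy)\,K_y\Big\rangle_{\mathscr{H}_K} = \int\mu(dy)\,\overline{f(y)}. \nonumber
\end{equation}
Writing $\nu = T_K^*f$ and using $\langle\nu,\mu\rangle_{\mathscr{M}_2(K)} = \nu K\mu = \iint\overline{\nu(dx)}K(x,y)\mu(dy)$ together with the Hermitian symmetry $K(y,x) = \overline{K(x,y)}$, the demand that these agree for every test measure $\mu$ forces $\int K(y,x)\,\nu(dx) = f(y)$ for all $y$, i.e.\ $K\nu = f$ with $K$ read as a kernel (integral) operator. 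This is exactly $T_K^*f = K^{-1}f$ as asserted in \eqref{eq:nm2}.

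Finally I would fix the meaning of $K^{-1}$. Because $T_K$ is an isometry onto the (generally proper) closed subspace $m_K\subset\mathscr{H}_K$, it is not boundedly invertible on all of $\mathscr{H}_K$: the equation $K\nu = f$ is solvable precisely for $f\in m_K$, and $T_KT_K^*$ equals the orthogonal projection onto $m_K$. Thus $T_K^*$ acts as the genuine inverse of $T_K$ on $m_K$ and annihilates $m_K^{\perp}$, which is precisely the Moore--Penrose (Penrose) generalized inverse; this is the sense in which $K^{-1}$ is to be understood.

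The hard part will be the analytic rigor rather than the formal algebra: justifying the extraction of $\int\mu(dy)\,K_y$ from the inner product as a weak/Bochner integral (licensed by $\mu K\mu<\infty$) and the attendant Fubini interchange, and making precise the set of $f$ for which $\nu = T_K^*f$ is realized as an honest signed measure rather than only as an abstract element of the completion. It is exactly this gap between $m_K$ and all of $\mathscr{H}_K$ that makes the Penrose inverse, rather than a literal two-sided inverse, the correct formulation.
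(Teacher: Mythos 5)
Your proof is correct and follows essentially the same route as the paper's: both verify the adjoint identity $\left\langle T_{K}\mu,f\right\rangle _{\mathscr{H}_{K}}=\left\langle \mu,K^{-1}f\right\rangle _{\mathscr{M}_{2}\left(K\right)}$ by using the reproducing property to reduce the left-hand side to $\int\mu\left(dx\right)f\left(x\right)$ and then identifying this with $\mu K\left(K^{-1}f\right)$ via the definition of the $\mathscr{M}_{2}\left(K\right)$ inner product. Your supplementary remarks on the Moore--Penrose structure go a bit beyond the paper's single closing sentence and are welcome, though note the small internal tension between first asserting that $T_{K}$ has dense range (so $T_{K}T_{K}^{*}$ would be the identity on $\mathscr{H}_{K}$, not a proper projection) and later describing $m_{K}$ as a proper closed subspace.
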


\begin{proof}
We must show the following identity for the respective inner products
on $f\in\mathscr{H}_{K}$, $\mu\in\mathscr{M}_{2}\left(K\right)$:
\begin{equation}
\left\langle T_{K}\mu,f\right\rangle _{\mathscr{H}_{K}}=\left\langle \mu,K^{-1}f\right\rangle _{\mathscr{M}_{2}\left(K\right)}.\label{eq:nm3}
\end{equation}
Using \eqref{eq:nm2}, we arrive at the following: 
\begin{align*}
\text{LHS}_{\left(\ref{eq:nm3}\right)} & =\left\langle \int\mu\left(dx\right)K\left(x,\cdot\right),f\right\rangle _{\mathscr{H}_{K}}\\
 & =\int\mu\left(dx\right)f\left(x\right)\\
 & =\int\mu\left(dx\right)K\left(K^{-1}f\right)\left(y\right)=\text{\ensuremath{\text{RHS}_{\left(\ref{eq:nm3}\right)}}. }
\end{align*}
Note that when $K^{-1}$ is acting via Penrose inverse on the function
$f$, the result $K^{-1}f$ is a signed measure, and that is the interpretation
used in the statement of the Proposition. 
\end{proof}
Of special significance to the above discussion are the following
citations \cite{MR4561157, MR4567343, MR4815055, MR3173967, MR4635411, MR3875593}.

\begin{flushleft}
\bibliographystyle{amsalpha}
\bibliography{ref}

\newcommand{\etalchar}[1]{$^{#1}$}
\providecommand{\bysame}{\leavevmode\hbox to3em{\hrulefill}\thinspace}
\providecommand{\MR}{\relax\ifhmode\unskip\space\fi MR }
\providecommand{\MRhref}[2]{%
  \href{http://www.ams.org/mathscinet-getitem?mr=#1}{#2}
}
\providecommand{\href}[2]{#2}
\begin{thebibliography}{YTDMM11}

\bibitem[AA23]{MR4689655}
Nourhane Attia and Ali Akg\"{u}l, \emph{On solutions of biological models using
  reproducing {K}ernel {H}ilbert space method}, Computational methods for
  biological models, Stud. Comput. Intell., vol. 1109, Springer, Singapore,
  [2023] \copyright 2023, pp.~117--136. \MR{4689655}

\bibitem[AAARM24]{MR4751246}
Taher Amoozad, Tofigh Allahviranloo, Saeid Abbasbandy, and Mohsen
  Rostamy~Malkhalifeh, \emph{Using a new implementation of reproducing kernel
  {H}ilbert space method to solve a system of second-order {BVP}s}, Int. J.
  Dyn. Control \textbf{12} (2024), no.~6, 1694--1706. \MR{4751246}

\bibitem[ADR03]{MR2012838}
D.~Alpay, A.~Dijksma, and J.~Rovnyak, \emph{A theorem of {B}eurling-{L}ax type
  for {H}ilbert spaces of functions analytic in the unit ball}, Integral
  Equations Operator Theory \textbf{47} (2003), no.~3, 251--274. \MR{2012838}

\bibitem[AJ21]{MR4302453}
Daniel Alpay and Palle E.~T. Jorgensen, \emph{New characterizations of
  reproducing kernel {H}ilbert spaces and applications to metric geometry},
  Opuscula Math. \textbf{41} (2021), no.~3, 283--300. \MR{4302453}

\bibitem[AJ22]{MR4467152}
Daniel Alpay and Palle Jorgensen, \emph{Reflection positivity via {K}rein space
  analysis}, Adv. in Appl. Math. \textbf{141} (2022), Paper No. 102411, 45.
  \MR{4467152}

\bibitem[AJP22]{MR4405120}
Daniel Alpay, Palle Jorgensen, and Motke Porat, \emph{White noise space
  analysis and multiplicative change of measures}, J. Math. Phys. \textbf{63}
  (2022), no.~4, Paper No. 042102, 23. \MR{4405120}

\bibitem[Alp15]{MR3410523}
Daniel Alpay, \emph{An advanced complex analysis problem book},
  Birkh\"{a}user/Springer, Cham, 2015, Topological vector spaces, functional
  analysis, and Hilbert spaces of analytic functions. \MR{3410523}

\bibitem[AMP92]{MR1161974}
Gregory~T. Adams, Paul~J. McGuire, and Vern~I. Paulsen, \emph{Analytic
  reproducing kernels and multiplication operators}, Illinois J. Math.
  \textbf{36} (1992), no.~3, 404--419. \MR{1161974}

\bibitem[AP20]{MR4113042}
Daniel Alpay and Ismael~L. Paiva, \emph{On the extension of positive definite
  kernels to topological algebras}, J. Math. Phys. \textbf{61} (2020), no.~6,
  063507, 10. \MR{4113042}

\bibitem[Aro50]{MR51437}
N.~Aronszajn, \emph{Theory of reproducing kernels}, Trans. Amer. Math. Soc.
  \textbf{68} (1950), 337--404. \MR{51437}

\bibitem[Arv98]{MR1668582}
William Arveson, \emph{Subalgebras of {$C^*$}-algebras. {III}. {M}ultivariable
  operator theory}, Acta Math. \textbf{181} (1998), no.~2, 159--228.
  \MR{1668582}

\bibitem[BB23]{MR4586941}
Anton Baranov and Timur Batenev, \emph{Representing systems of reproducing
  kernels in spaces of analytic functions}, Results Math. \textbf{78} (2023),
  no.~4, Paper No. 143, 17. \MR{4586941}

\bibitem[Ber55]{MR75656}
Stefan Bergman, \emph{Bounds for analytic functions in domains with a
  distinguished boundary surface}, Math. Z. \textbf{63} (1955), 173--194.
  \MR{75656}

\bibitem[BGG99]{MR1698952}
Richard Beals, Peter Greiner, and Bernard Gaveau, \emph{Green's functions for
  some highly degenerate elliptic operators}, J. Funct. Anal. \textbf{165}
  (1999), no.~2, 407--429. \MR{1698952}

\bibitem[CCF16]{MR3439677}
Min Chen, Yang Chen, and Engui Fan, \emph{Perturbed {H}ankel determinant,
  correlation functions and {P}ainlev\'{e} equations}, J. Math. Phys.
  \textbf{57} (2016), no.~2, 023501, 31. \MR{3439677}

\bibitem[CCL17]{MR3603383}
Hong~Rae Cho, Hyunil Choi, and Han-Wool Lee, \emph{Boundedness of the
  {S}egal-{B}argmann transform on fractional {H}ermite-{S}obolev spaces}, J.
  Funct. Spaces (2017), Art. ID 9176914, 6. \MR{3603383}

\bibitem[CDD15]{MR3454256}
Sergio Cruces and Iv\'{a}n Dur\'{a}n-D\'{\i}az, \emph{The minimum risk
  principle that underlies the criteria of bounded component analysis}, IEEE
  Trans. Neural Netw. Learn. Syst. \textbf{26} (2015), no.~5, 964--981.
  \MR{3454256}

\bibitem[DKS19]{MR3994389}
Kamal Diki, Rolf~S\"{o}ren Krausshar, and Irene Sabadini, \emph{On the
  {B}argmann-{F}ock-{F}ueter and {B}ergman-{F}ueter integral transforms}, J.
  Math. Phys. \textbf{60} (2019), no.~8, 083506, 26. \MR{3994389}

\bibitem[Don74]{MR0486556}
William~F. Donoghue, Jr., \emph{Monotone matrix functions and analytic
  continuation}, Die Grundlehren der mathematischen Wissenschaften, Band 207,
  Springer-Verlag, New York-Heidelberg, 1974. \MR{0486556}

\bibitem[DS20]{MR4059341}
Micho Durdevich and Stephen~Bruce Sontz, \emph{Coherent states for the {M}anin
  plane via {T}oeplitz quantization}, J. Math. Phys. \textbf{61} (2020), no.~2,
  023502, 17. \MR{4059341}

\bibitem[Gia21]{MR4241109}
Dimitrios Giannakis, \emph{Quantum dynamics of the classical harmonic
  oscillator}, J. Math. Phys. \textbf{62} (2021), no.~4, Paper No. 042701, 45.
  \MR{4241109}

\bibitem[Has21]{MR4248024}
Friedrich Haslinger, \emph{The generalized {$\partial$}-complex on the
  {S}egal-{B}argmann space}, Operator theory, functional analysis and
  applications, Oper. Theory Adv. Appl., vol. 282, Birkh\"{a}user/Springer,
  Cham, [2021] \copyright 2021, pp.~317--328. \MR{4248024}

\bibitem[Hil98]{MR1642132}
Adrian~T. Hill, \emph{Estimates on the {G}reen's function of second-order
  elliptic operators in {${\bf R}^N$}}, Proc. Roy. Soc. Edinburgh Sect. A
  \textbf{128} (1998), no.~5, 1033--1051. \MR{1642132}

\bibitem[HJW20]{MR4472249}
John~E. Herr, Palle E.~T. Jorgensen, and Eric~S. Weber, \emph{Harmonic analysis
  of fractal measures: basis and frame algorithms for fractal {$L^2$}-spaces,
  and boundary representations as closed subspaces of the {H}ardy space},
  Analysis, probability and mathematical physics on fractals, Fractals Dyn.
  Math. Sci. Arts Theory Appl., vol.~5, World Sci. Publ., Hackensack, NJ,
  [2020] \copyright 2020, pp.~163--221. \MR{4472249}

\bibitem[HMBV24]{MR4735224}
Boya Hou, Amarsagar Reddy~Ramapuram Matavalam, Subhonmesh Bose, and Umesh
  Vaidya, \emph{Propagating uncertainty through system dynamics in reproducing
  kernel {H}ilbert space}, Phys. D \textbf{463} (2024), Paper No. 134168, 9.
  \MR{4735224}

\bibitem[HSZ{\etalchar{+}}19]{MR3969356}
Bo~He, Yan Song, Yuemei Zhu, Qixin Sha, Yue Shen, Tianhong Yan, Rui Nian, and
  Amaury Lendasse, \emph{Local receptive fields based extreme learning machine
  with hybrid filter kernels for image classification}, Multidimens. Syst.
  Signal Process. \textbf{30} (2019), no.~3, 1149--1169. \MR{3969356}

\bibitem[Jon09]{MR2597189}
Lee~K. Jones, \emph{Local minimax learning of functions with best finite sample
  estimation error bounds: applications to ridge and lasso regression,
  boosting, tree learning, kernel machines, and inverse problems}, IEEE Trans.
  Inform. Theory \textbf{55} (2009), no.~12, 5700--5727. \MR{2597189}

\bibitem[Jor02]{MR1895530}
Palle E.~T. Jorgensen, \emph{Diagonalizing operators with reflection symmetry},
  vol. 190, 2002, Special issue dedicated to the memory of I. E. Segal,
  pp.~93--132. \MR{1895530}

\bibitem[JS21]{MR4291375}
Palle Jorgensen and David~E. Stewart, \emph{Approximation properties of ridge
  functions and extreme learning machines}, SIAM J. Math. Data Sci. \textbf{3}
  (2021), no.~3, 815--832. \MR{4291375}

\bibitem[JST20]{MR4126821}
Palle Jorgensen, Myung-Sin Song, and James Tian, \emph{A {K}aczmarz algorithm
  for sequences of projections, infinite products, and applications to frames
  in {IFS} {$L^2$} spaces}, Adv. Oper. Theory \textbf{5} (2020), no.~3,
  1100--1131. \MR{4126821}

\bibitem[JST23]{MR4561157}
Palle E.~T. Jorgensen, Myung-Sin Song, and James Tian,
  \emph{Infinite-dimensional stochastic transforms and reproducing kernel
  {H}ilbert space}, Sampl. Theory Signal Process. Data Anal. \textbf{21}
  (2023), no.~1, Paper No. 12, 27. \MR{4561157}

\bibitem[JT22]{MR4295177}
Palle Jorgensen and James Tian, \emph{Reproducing kernels and choices of
  associated feature spaces, in the form of {$L^2$}-spaces}, J. Math. Anal.
  Appl. \textbf{505} (2022), no.~2, Paper No. 125535, 31. \MR{4295177}

\bibitem[JT23a]{MR4536608}
\bysame, \emph{Harmonic analysis of network systems via kernels and their
  boundary realizations}, Discrete Contin. Dyn. Syst. Ser. S \textbf{16}
  (2023), no.~2, 277--308. \MR{4536608}

\bibitem[JT23b]{MR4567343}
Palle E.~T. Jorgensen and James Tian, \emph{Dual pairs of operators, harmonic
  analysis of singular nonatomic measures and {K}rein-{F}eller diffusion}, J.
  Operator Theory \textbf{89} (2023), no.~1, 205--248. \MR{4567343}

\bibitem[JT23c]{MR4590528}
\bysame, \emph{Stochastics and dynamics of fractals}, Recent developments in
  operator theory, mathematical physics and complex analysis, Oper. Theory Adv.
  Appl., vol. 290, Birkh\"{a}user/Springer, Cham, [2023] \copyright 2023,
  pp.~171--216. \MR{4590528}

\bibitem[Kis23]{MR4553935}
Vladimir~V. Kisil, \emph{Cross-{T}oeplitz operators on the
  {F}ock-{S}egal-{B}argmann spaces and two-sided convolutions on the
  {H}eisenberg group}, Ann. Funct. Anal. \textbf{14} (2023), no.~2, Paper No.
  38, 57. \MR{4553935}

\bibitem[KL21]{MR4294330}
Seick Kim and Sungjin Lee, \emph{Estimates for {G}reen's functions of elliptic
  equations in non-divergence form with continuous coefficients}, Ann. Appl.
  Math. \textbf{37} (2021), no.~2, 111--130. \MR{4294330}

\bibitem[LCA{\etalchar{+}}24]{MR4769269}
William Lippitt, Nichole~E. Carlson, Jaron Arbet, Tasha~E. Fingerlin, Lisa~A.
  Maier, and Katerina Kechris, \emph{Limitations of clustering with {PCA} and
  correlated noise}, J. Stat. Comput. Simul. \textbf{94} (2024), no.~10,
  2291--2319. \MR{4769269}

\bibitem[LG20]{MR4122067}
Marcos L\'{o}pez-Garc\'{\i}a, \emph{The weighted {B}ergman space on a sector
  and a degenerate parabolic equation}, J. Math. Anal. Appl. \textbf{491}
  (2020), no.~2, 124344, 15. \MR{4122067}

\bibitem[Loe48]{MR24487}
Charles Loewner, \emph{A topological characterization of a class of integral
  operators}, Ann. of Math. (2) \textbf{49} (1948), 316--332. \MR{24487}

\bibitem[MDL19]{MR4300781}
Elisa Marcelli and Renato De~Leone, \emph{Infinite {K}ernel {E}xtreme
  {L}earning {M}achine}, Advances in optimization and decision science for
  society, services and enterprises, AIRO Springer Ser., vol.~3, Springer,
  Cham, [2019] \copyright 2019, pp.~95--105. \MR{4300781}

\bibitem[MWS25]{MR4815055}
Haifeng Ma, Wen Wang, and Predrag~S. Stanimirovi\'{c}, \emph{Weighted
  {M}oore-{P}enrose inverses for dual matrices and its applications}, Appl.
  Math. Comput. \textbf{489} (2025), Paper No. 129145, 14. \MR{4815055}

\bibitem[Nel58a]{MR95341}
Edward Nelson, \emph{An existence theorem for second order parabolic
  equations}, Trans. Amer. Math. Soc. \textbf{88} (1958), 414--429. \MR{95341}

\bibitem[Nel58b]{MR91442}
\bysame, \emph{Kernel functions and eigenfunction expansions}, Duke Math. J.
  \textbf{25} (1958), 15--27. \MR{91442}

\bibitem[NSW11]{MR2810909}
P.~Niyogi, S.~Smale, and S.~Weinberger, \emph{A topological view of
  unsupervised learning from noisy data}, SIAM J. Comput. \textbf{40} (2011),
  no.~3, 646--663. \MR{2810909}

\bibitem[PDC{\etalchar{+}}14]{MR3173967}
Gianluigi Pillonetto, Francesco Dinuzzo, Tianshi Chen, Giuseppe De~Nicolao, and
  Lennart Ljung, \emph{Kernel methods in system identification, machine
  learning and function estimation: a survey}, Automatica J. IFAC \textbf{50}
  (2014), no.~3, 657--682. \MR{3173967}

\bibitem[PR16]{MR3526117}
Vern~I. Paulsen and Mrinal Raghupathi, \emph{An introduction to the theory of
  reproducing kernel {H}ilbert spaces}, Cambridge Studies in Advanced
  Mathematics, vol. 152, Cambridge University Press, Cambridge, 2016.
  \MR{3526117}

\bibitem[SBP23]{MR4569938}
Anirban Sen, Pintu Bhunia, and Kallol Paul, \emph{Bounds for the {B}erezin
  number of reproducing kernel {H}ilbert space operators}, Filomat \textbf{37}
  (2023), no.~6, 1741--1749. \MR{4569938}

\bibitem[Sch64]{MR179587}
Laurent Schwartz, \emph{Sous-espaces hilbertiens d'espaces vectoriels
  topologiques et noyaux associ\'{e}s (noyaux reproduisants)}, J. Analyse Math.
  \textbf{13} (1964), 115--256. \MR{179587}

\bibitem[SS17]{MR3658810}
Jan Stochel and Jerzy Bart\l~omiej Stochel, \emph{Composition operators on
  {H}ilbert spaces of entire functions with analytic symbols}, J. Math. Anal.
  Appl. \textbf{454} (2017), no.~2, 1019--1066. \MR{3658810}

\bibitem[Ste24]{MR4734568}
Ingo Steinwart, \emph{Reproducing kernel {H}ilbert spaces cannot contain all
  continuous functions on a compact metric space}, Arch. Math. (Basel)
  \textbf{122} (2024), no.~5, 553--557. \MR{4734568}

\bibitem[TXK23]{MR4635411}
Xin Tan, Yingcun Xia, and Efang Kong, \emph{Choosing shape parameters for
  regression in reproducing kernel {H}ilbert space and variable selection}, J.
  Nonparametr. Stat. \textbf{35} (2023), no.~3, 514--528. \MR{4635411}

\bibitem[vdL96]{MR1482832}
Angelika van~der Linde, \emph{The invariance of statistical analyses with
  smoothing splines with respect to the inner product in the reproducing kernel
  {H}ilbert space}, Statistical theory and computational aspects of smoothing
  ({S}emmering, 1994), Contrib. Statist., Physica, Heidelberg, 1996,
  pp.~149--164. \MR{1482832}

\bibitem[WK23]{MR4655783}
Hengfang Wang and Jae~Kwang Kim, \emph{Statistical inference using regularized
  {M}-estimation in the reproducing kernel {H}ilbert space for handling missing
  data}, Ann. Inst. Statist. Math. \textbf{75} (2023), no.~6, 911--929.
  \MR{4655783}

\bibitem[YTDMM11]{MR3024752}
Shi Yu, L\'{e}on-Charles Tranchevent, Bart De~Moor, and Yves Moreau,
  \emph{Kernel-based data fusion for machine learning}, Studies in
  Computational Intelligence, vol. 345, Springer-Verlag, Berlin, 2011, Methods
  and applications in bioinformatics and text mining. \MR{3024752}

\bibitem[ZCH19]{MR3875593}
Yang Zhou, Di-Rong Chen, and Wei Huang, \emph{A class of optimal estimators for
  the covariance operator in reproducing kernel {H}ilbert spaces}, J.
  Multivariate Anal. \textbf{169} (2019), 166--178. \MR{3875593}

\bibitem[ZXZ09]{MR2579912}
Haizhang Zhang, Yuesheng Xu, and Jun Zhang, \emph{Reproducing kernel {B}anach
  spaces for machine learning}, J. Mach. Learn. Res. \textbf{10} (2009),
  2741--2775. \MR{2579912}

\bibitem[ZZ23]{MR4601874}
Yilin Zhang and Liping Zhu, \emph{Projection divergence in the reproducing
  kernel {H}ilbert space: {A}symptotic normality, block-wise and slicing
  estimation, and computational efficiency}, J. Multivariate Anal. \textbf{197}
  (2023), Paper No. 105204. \MR{4601874}

\end{thebibliography}
\par\end{flushleft}
\end{document}